\algnewcommand{\IIf}[1]{\State\algorithmicif\ #1\ \algorithmicthen}
\newcommand{\param}{\boldsymbol{\mu}}
\newcommand{\sol}{\boldsymbol{u}}
\newcommand{\fullstaten}{\textit{N}}
\newcommand{\gencoord}{\boldsymbol{y}}
\newcommand{\RR}{\mathbb{R}}
\newcommand{\nbasisspace}{\textit{k}}
\newcommand{\res}{\boldsymbol{r}}
\newcommand{\massmatrix}{\boldsymbol{A}}
\newcommand{\ffunction}{\boldsymbol{f}}
\newcommand{\gfunction}{\boldsymbol{g}}
\newcommand{\nmjacobian}{\jacobian_{\gfunction}}
\DeclareMathOperator*{\argmin}{\arg\!\min}
\DeclareMathOperator*{\argmax}{\arg\!\max}
\newcommand{\approxsol}{\widetilde{\sol}}
\newcommand{\approxres}{\widetilde{\res}}
\newcommand{\redres}{\widehat{\res}}
\newcommand{\approxf}{\widetilde{\ffunction}}
\newcommand{\basis}{\boldsymbol{\Phi}}
\newcommand{\basiscol}{\boldsymbol{\phi}}
\newcommand{\sourcebasis}{\boldsymbol{\Phi_f}}
\newcommand{\projmatrix}{\boldsymbol{\Psi}}
\newcommand{\projcol}{\boldsymbol{\psi}}
\newcommand{\resbasis}{\boldsymbol{\Phi_r}}
\newcommand{\nsamp}{\textit{n}_s}
\newcommand{\nbasissource}{\textit{n}_f}
\newcommand{\nbasisres}{\textit{n}_r}
\newcommand{\samplingmat}{\boldsymbol{Z}}
\newcommand{\sampset}{\mathcal{Z}} % set of sampled indices
\newcommand{\sampneighs}{\sampset'} % set of sampled indices' neighbors
\newcommand{\gappyf}{\widehat{\ffunction}}
\newcommand{\gappyr}{\widehat{\res}}
\newcommand{\gappyerr}{\varepsilon}
\newcommand{\energyvar}{\textit{E}}
\newcommand{\convflux}{\boldsymbol{F_c}}
\newcommand{\viscflux}{\boldsymbol{F_v}}
\newcommand{\heatflux}{\boldsymbol{q}}
\newcommand{\solDomainSymbol}{\Omega}
\newcommand{\gradientSymbol}{\nabla}
\newcommand{\energySymbol}{e}
\newcommand{\velocitySymbol}{v}
\newcommand{\positionSymbol}{x}
\newcommand{\densitySymbol}{\rho}
\newcommand{\pressureSymbol}{p}
\newcommand{\stressSymbol}{\sigma}
\newcommand{\artificialStressSymbol}{\stressSymbol_a}
\newcommand{\adiabaticIndexSymbol}{\gamma}
\newcommand{\normalSymbol}{n}
\newcommand{\neumannSymbol}{g}
\newcommand{\identitySymbol}{I}
\newcommand{\timeSymbol}{t}
\newcommand{\jacobianSymbol}{J}
\newcommand{\massMatSymbol}{M}
\newcommand{\oneSymbol}{1}
\newcommand{\forceSymbol}{F}
\newcommand{\kinematicSymbol}{V}
\newcommand{\thermodynamicSymbol}{E}
\newcommand{\feSpace}[1]{\mathcal{#1}}
\newcommand{\sizeFOMsymbol}{N}
\newcommand{\timeIndex}{n}
\newcommand{\ROMBasisSymbol}{\mathbf{\Phi}}
\newcommand{\ROMSamplingMatSymbol}{\mathbf{Z}}
\newcommand{\relErrorSymbol}{\mathcal{\epsilon}}
\newcommand{\fullNorm}[1]{{\left\vert\kern-0.25ex\left\vert\kern-0.25ex\left\vert #1 
    \right\vert\kern-0.25ex\right\vert\kern-0.25ex\right\vert}}
\newcommand{\stateSymbol}{w}
\newcommand{\forceOne}{\forceSys_{\mathfrak{1}}}
\newcommand{\forceTv}{\forceSys_{t\velocitySymbol}}
\newcommand{\avgforceTv}{\bar{\forceSys}_{t\velocitySymbol}}
\newcommand{\forceOnek}[1]{\forceOne^{#1}}
\newcommand{\forceTvk}[1]{\forceTv^{#1}}
\newcommand{\avgforceTvk}[1]{\avgforceTv^{#1}}
\newcommand{\energy}{\boldsymbol{\energySymbol}}
\newcommand{\velocity}{\boldsymbol{\velocitySymbol}}
\newcommand{\position}{\boldsymbol{\positionSymbol}}
\newcommand{\state}{\boldsymbol{\stateSymbol}}
\newcommand{\velocityApprox}{\tilde{\velocity}}
\newcommand{\energyApprox}{\tilde{\energy}}
\newcommand{\positionApprox}{\tilde{\position}}
\newcommand{\relError}{\relErrorSymbol}
\newcommand{\massMat}{\boldsymbol{\massMatSymbol}}
\newcommand{\oneVec}{\boldsymbol{\oneSymbol}}
\newcommand{\kinematicFE}{\feSpace{\kinematicSymbol}}
\newcommand{\thermodynamicFE}{\feSpace{\thermodynamicSymbol}}
\newcommand{\kinematicMassMat}{\massMat_{\kinematicFE}}
\newcommand{\thermodynamicMassMat}{\massMat_{\thermodynamicFE}}
\newcommand{\jacobian}{\boldsymbol{\jacobianSymbol}}
\newcommand{\forceMat}{\boldsymbol{\forceSymbol}}
\newcommand{\forceSys}{\mathsf{\forceSymbol}}
\newcommand{\sizeWholeFE}{\sizeFOMsymbol}
\newcommand{\timek}[1]{\timeSymbol_{#1}}
\newcommand{\finalTime}{T}
\newcommand{\ntimestep}{N_{\timeSymbol}}
\newcommand{\timestep}{\Delta\timeSymbol}
\newcommand{\timestepk}[1]{\timestep_{#1}}
\newcommand{\avgvelocityt}[1]{\bar{\velocity}^{#1}}
\newcommand{\statet}[1]{\state^{#1}}
\newcommand{\energyt}[1]{\energy^{#1}}
\newcommand{\velocityt}[1]{\velocity^{#1}}
\newcommand{\positiont}[1]{\position^{#1}}
\newcommand{\energyApproxt}[1]{\energyApprox^{#1}}
\newcommand{\velocityApproxt}[1]{\velocityApprox^{#1}}
\newcommand{\positionApproxt}[1]{\positionApprox^{#1}}
\newcommand{\forceTvBasis}{\ROMBasisSymbol_{\forceTv}}
\newcommand{\forceOneBasis}{\ROMBasisSymbol_{\forceOne}}
\newcommand{\forceTvSamplingMat}{\ROMSamplingMatSymbol_{\forceTv}}
\newcommand{\forceOneSamplingMat}{\ROMSamplingMatSymbol_{\forceOne}}
\newcommand{\ntimestepROM}{\widetilde{N}_{\timeSymbol}}
\newcommand{\relerrorVelocityt}[1]{\relError_{\velocitySymbol,#1}}
\newcommand{\relerrorEnergyt}[1]{\relError_{\energySymbol,#1}}
\newcommand{\relerrorPositiont}[1]{\relError_{\positionSymbol,#1}}
\crefname{hypothesis}{Hypothesis}{Hypotheses}
\title{S-OPT: 
A Points Selection Algorithm for 
Hyper-Reduction in Reduced Order Models\thanks{Submitted to the editors March 12, 2022.}}
\author{
  Jessica T. Lauzon\thanks{Department of Aeronautics and Astronautics, Stanford University, Stanford, CA 94350 (\email{jlauzon@stanford.edu})}
  \and 
  Siu Wun Cheung\thanks{Center for Applied Scientific Computing, Lawrence
  Livermore National Laboratory, Livermore, CA 94550 (\email{cheung26@llnl.gov}, \email{choi15@llnl.gov}, \email{copeland11@llnl.gov})}
  \and
  Yeonjong Shin\thanks{Division of Applied Mathematics, Brown University, Providence, RI 02912
  (\email{yeonjong\_shin@brown.edu})}
  \and
  Youngsoo Choi\footnotemark[3]
  \and
  Dylan Matthew Copeland\footnotemark[3]
  \and
  Kevin Huynh\thanks{Applications, Simulations, and Quality, Lawrence Livermore
  National Laboratory, Livermore, CA 94550 (\email{huynh24@llnl.gov})}
}
\begin{document}

\maketitle

% REQUIRED
\begin{abstract}
%   A points selection algorithm named S-OPT is proposed for the selection of indices for the hyper-reduction step of 
  While
  projection-based reduced order models 
  can reduce 
  the dimension of full order solutions, 
  the resulting reduced models may still contain terms that scale with the full order dimension.
  Hyper-reduction techniques are sampling-based methods that further reduce 
  this computational complexity 
  by approximating such terms 
  with a much smaller dimension.
  The goal of this work is to introduce 
  a points selection algorithm developed by Shin and Xiu [\textit{SIAM J. Sci. Comput.}, 38 (2016), pp. A385--A411],  
  as a hyper-reduction method.
  The selection algorithm 
  is originally proposed as a stochastic collocation method for uncertainty quantification.
  Since the algorithm aims at maximizing a quantity $\mathcal{S}$
  that measures both the \textit{column orthogonality} and the \textit{determinant}, 
  we refer to the algorithm as S-OPT.
  Numerical examples are provided to demonstrate the performance of S-OPT and to compare its performance with an over-sampled Discrete Empirical Interpolation (DEIM) algorithm.
  We found that using the S-OPT algorithm is shown to predict the full order solutions 
  with higher accuracy for a given number of indices. 
\end{abstract}

% REQUIRED
\begin{keywords}
  reduced order modeling, nonlinear model reduction,
% proper orthogonal decomposition, 
  Galerkin projection,
%   least-squares Petrov--Galerkin projection, 
  hyper-reduction,
  sampling algorithm
\end{keywords}

% REQUIRED
\begin{AMS}
  37M99, 65M99, 76D05, 67Q05
\end{AMS}

\section{Introduction}
Physical simulation is the key to the developments of science, engineering and
technology. Various physical processes are mathematically modeled by time-dependent nonlinear partial differential equations (PDEs). 
Since analytical solutions to such problems are not available in general, one has to resort to numerical methods to effectively approximate the solution.
% In many applications,
% analytical solutions to such problems are not available, and numerical methods 
% are developed to effectively approximate the solution.
% In the past few decades, 
State-of-the-art numerical methods 
have been proven successful in obtaining accurate approximations of 
the groundtruth observations in various application problems.
However, subject to the complexity and the scale of the problem domain, 
the computational cost of such numerical methods could be prohibitively high.
% However, subject to the complexity and the scale of the problem domain, 
% the computational cost of traditional numerical methods may be prohibitively high.
Even with high-performance computing,
a single forward simulation could take a very long time.
Yet, multiple forward simulations are typically required in 
some real world decision-making applications 
such as design optimization \cite{wang2007large, de2020three,
de2018adaptive, white2020dual}, optimal control \cite{choi2015practical,
choi2012simultaneous}, uncertainty quantification \cite{smith2013uncertainty,
biegler2011large}, and inverse problems \cite{galbally2010non,
biegler2011large}, 
% \cite{wang2007large, de2020three,
% de2018adaptive, white2020dual,choi2015practical,
% choi2012simultaneous,smith2013uncertainty,
% biegler2011large,galbally2010non,
% biegler2011large},
which make such problems computationally intractable.
% In decision-making applications that require multiple forward simulations,
% such as design optimization \cite{wang2007large, de2020three,
% de2018adaptive, white2020dual}, optimal control \cite{choi2015practical,
% choi2012simultaneous}, uncertainty quantification \cite{smith2013uncertainty,
% biegler2011large}, and inverse problems \cite{galbally2010non,
% biegler2011large}, the computationally expensive simulations are not desirable.

Constructing a reduced order model (ROM) is a popular and powerful computational technique 
to obtain sufficiently accurate numerical solutions 
with considerable speed-up
compared to the corresponding full order model (FOM).
% , which can take a long time to run. 
Various model reduction schemes have been proposed.
% to reduce the computational cost 
% of simulations 
% while maintaining a reasonable accuracy.
% error introduced by the reduction process. 
% minimizing the error 
Many of them seek to extract an intrinsic solution manifold using 
a condensed solution representation.
% Such representation is often sought as a linear combination of reduced basis-like vectors.
Depending on how these representations are constructed, two major approaches exist--
linear subspace reduced order models (LS-ROM) 
and nonlinear manifold reduced order models (NM-ROM).
In either case, the governing equations are projected onto the solution manifold
as part of the reduction strategy, and therefore these approaches are referred to as 
projection-based reduced order models (PROMs). 
This differs from other ROM techniques such as interpolation or data fitting.

% LS-ROM (?)
% {``\color{blue} LS-ROM"}
In LS-ROM, the reduced basis vectors are obtained through, for example, 
proper orthogonal decomposition (POD).
The number of degrees of freedom is then reduced by substituting the ROM solution representation into the (semi-)discretized governing equation.  
This approach takes advantage of both the known governing equations and the solution data
generated from the corresponding FOM simulations to form LS-ROM.  
Example applications include, but are not limited to, nonlinear diffusion equations \cite{hoang2020domain, fritzen2018algorithmic}, Burgers equation and the Euler equations in small-scale \cite{choi2019space,
choi2020sns, carlberg2018conservative} and large-scale, convection--diffusion equations \cite{mojgani2017lagrangian, kim2021efficient}, the Navier--Stokes equations
\cite{xiao2014non, burkardt2006pod}, the compressible Euler equations in a 
moving Lagrangian frame \cite{copeland2022reduced, cheung2022local}, rocket nozzle shape design
\cite{amsallem2015design}, flutter avoidance wing shape optimization \cite{choi2020gradient}, topology optimization of wind turbine blades
\cite{choi2019accelerating}, lattice structure design \cite{mcbane2021component}, porous media flow/reservoir simulations \cite{ghasemi2015localized, jiang2019implementation, yang2016fast,
wang2020generalized}, computational electro-cardiology \cite{yang2017efficient},
inverse problems \cite{fu2018pod}, shallow water equations \cite{zhao2014pod, cstefuanescu2013pod}, Boltzmann transport problems \cite{choi2021space},
computing electromyography \cite{mordhorst2017pod}, spatio-temporal dynamics of a predator--prey system \cite{dimitriu2013application}, acoustic wave-driven microfluidic biochips \cite{antil2012reduced}, and the Schr{\"o}dinger equation \cite{cheng2016reduced}. 
% NM-ROM (?)
% {``\color{blue} NM-ROM"}
However, in advection-dominated problems, the intrinsic solution space cannot be approximated by subspaces with a small dimension, i.e., the solution space with slowly decaying Kolmogorov $n$-width. As an alternative to LS-ROM, we can replace the linear subspace solution representation with a nonlinear manifold. This type of ROM is known as NM-ROM. 
% nonlinear manifold reduced order model (NM-ROM). 
A neural network-based reduced order model is developed in \cite{lee2020model} and extended to preserve the conserved quantities in the physical conservation laws \cite{lee2019deep}.  
% In these approaches, the weights and biases in the neural network are determined in the training phase, and existing numerical methods, such as finite difference and finite element methods, are utilized. However, since the nonlinear terms need to be updated every time step or Newton step, and the computation of the nonlinear terms still scale with the FOM size, these approaches do not achieve any speed-up with respect to the corresponding FOM. 
Recently, Kim, et al., \cite{kim2022fast, kim2020efficient} have achieved a considerable speed-up with NM-ROMs via autoencoder. 
% by applying a hyper-reduction technique.

% {``\color{blue} Hyper-reduction"}
The main advantage of a ROM (either LS-ROM or NM-ROM) is to reduce the computational cost by using a low-dimensional structure for representation of state variables.
% to shrink the problem size. 
However, in nonlinear systems of PDEs,
% which arise from all areas in science and engineering, 
% \YS{the sentence (''the bottleneck -- of ROM.") is not very smooth. Maybe rephrase it?}
the performance of ROM is degraded due to the bottleneck issue of lifting to FOM size.
% the bottleneck issue of lifting to full order model size degrades the performance of ROM. 
That is, the nonlinear terms need to be evaluated in every time step as the state variables evolve in the time marching process. 
Since such evaluation scales with the FOM size, we cannot expect any speed-up without special treatment even if reduced representation is used to approximate the state variables. 
To overcome this issue, a hyper-reduction technique \cite{ryckelynck2005priori} is used to efficiently evaluate the nonlinear source terms by approximation.
% , where the term, ``hyper-reduction," was first coined by Ryckelynck in \cite{ryckelynck2005priori}. 
We note that \textit{a majority of the aforementioned literature achieved 
a true speed-up by applying a hyper-reduction technique.}
The key idea is to approximate the nonlinear terms using a small number of basis vectors,
while keeping the number of evaluations of nonlinear terms as small as possible.
To this end, the hyper-reduction technique requires one to strategically select 
a set of indices that leads to accurate approximations to the nonlinear terms.
% for approximating 
% and a selection operator to choose coordinate indices for interpolation by determining the generalized coordinates with respect to the nonlinear term basis. 
% The indices have to be strategically selected such that they provide the maximum information for the interpolant to approximate the nonlinear term. 

One of the most well-known selection algorithms is the Discrete Empirical Interpolation Method (DEIM) 
\cite{chaturantabut2010nonlinear},
which aims at minimizing the operator norm from the error estimate (Lemma 3.2 of \cite{chaturantabut2010nonlinear}).
DEIM is implemented through a greedy algorithm that sequentially selects one index at a time with respect to a certain criterion. 
Carlberg, et al., \cite{carlberg2013gnat} and \cite{carlberg2011efficient} 
extend this idea to allow oversampling. 
Q-DEIM is introduced in \cite{drmac2016new} as a new framework for constructing the 
DEIM-related operator via the QR factorization with column pivoting. 
The stability and oversampling of DEIM is also investigated in \cite{peherstorfer2020stability}.

% {``\color{blue} Goal and Contribution"}
The goal of this paper is to introduce the S-OPT sampling method \cite{shin16} 
as a hyper-reduction technique in ROMs,
and compare its performance with those by the DEIM family algorithms.
S-OPT was first developed by Shin and Xiu in \cite{shin16} as a points selection algorithm for least-squares based stochastic-collocation methods in Uncertainty Quantification (UQ) \cite{shin2016near,narayan2017christoffel,hadigol2018least,guo2020constructing}.
% We introduce S
% from the Uncertainty Quantification (UQ) community to the projection-based ROM (PROM) community, 
% and   developed a points selection algorithm for least-squares based stochastic-collocation methods in UQ
% \cite{shin2016near,narayan2017christoffel,hadigol2018least,guo2020constructing}. 
In the context of UQ, the goal is to find the best subset of points that yields the most accurate least-squares solution, which is closely aligned with the goal of the hyper-reduction. 
S-OPT aims to find a set of indices (rows or points) that maximizes the $\mathcal{S}$ quantity \cite{OptimalDesign} (to be introduced in Section~\ref{sec:S-OPT}).
The $\mathcal{S}$ quantity measures both the mutual column orthogonality and the determinant. 
% In this paper, we refer this method to “S-OPT” \cite{shin2016near}, as it aims at maximizing the quantity $\mathcal{S}$. 
The S-OPT algorithm is fundamentally different from the DEIM algorithm. 
The core principle of DEIM lies at maximizing \textit{the smallest singular value} (spectral norm) of the underlying projection matrix, 
while S-OPT seeks to maximize both \textit{the product of all the singular values} (determinant) and \textit{the column orthogonality}
of the underlying projection matrix.
We employ the S-OPT algorithm as an index selection operator for hyper-reduction.

\subsection{Paper organization}
The FOM is described in Section~\ref{sec:fom} to introduce some background information and notation. We then describe the PROM formulation in Section~\ref{sec:PROM}, 
which leads to the description of the hyper-reduction procedure in Section~\ref{sec:HR}. 
The sampling algorithms are part of the hyper-reduction procedure, 
so they are described in Section~\ref{sec:salgs}. 
Specifically, a DEIM algorithm with oversampling is outlined in Section~\ref{sec:greedy},
and the S-OPT algorithm is described in Section~\ref{sec:S-OPT}.
The performance and comparison of the two algorithms are presented in Section~\ref{sec:main} 
using four examples: 1D Burgers problem, 2D laminar viscous flow around airfoil, 
and two hydrodynamics examples, i.e., a 2D Gresho vortex problem and a 3D Sedov blast problem. The paper is concluded with summary and discussion in Section~\ref{sec:conclusions}.

% -----------------------
% - Problem Formulation -
% -----------------------
\section{Problem Formulation}
We start by defining the FOM and some notation used throughout the paper. We use two different PROMs in the example problems. First, we outline the LS-ROM formulation, followed by NM-ROM formulation for a general ordinary differential equation (ODE). This section precedes the main contribution of this paper, which is a sampling algorithm used with both of the PROM formulations.

For the rest of the paper, $\|\cdot \|$ is understood as either the standard Euclidean norm or the spectral matrix norm.

\subsection{Full Order Model} \label{sec:fom}
Consider a system of nonlinear ODEs resulting from the semidiscretization of a system of PDEs in the space domain
 \begin{equation} \label{eq:fom}
  \massmatrix(\param) \dot{\sol}(t; \param)
  = \ffunction(\sol(t; \param),t, \param),\quad\quad
  \sol(0; \param) = \sol^0(\param),
 \end{equation} 
 where $t \in [0,\finalTime]$ denotes time, 
 $\sol(t; \param) \in \RR^\fullstaten$ denotes the state vector of dimension $\fullstaten$, 
 $\sol^0(\param) \in \RR^\fullstaten $ denotes the initial condition,
 $\param \in \mathcal{D}$ denotes a vector of parameters defining the operating point of interest 
 within the parameter domain $\mathcal{D} \subseteq \RR^{\fullstaten_\mu}$, 
 $\boldsymbol{A}(\param) \in \RR^{\fullstaten \times \fullstaten}$ denotes a nonsingular matrix, 
 and $\boldsymbol{f}$: $\RR^\fullstaten \times \RR \times \mathcal{D} \to \RR^\fullstaten$ 
 is a nonlinear function and boundary conditions. The parameter and time dependence of $\massmatrix$ 
 and variables are dropped in the rest of the paper for notational simplicity, and are implied.
 Furthermore, the dot notation denotes the derivative with respect to time.
 The FOM system in Eq.~\eqref{eq:fom} can be written in a residual form as follows: %\todo{talk about how to solve unsteady}
  \begin{equation} \label{eq:resfom}
  \res(\sol,\dot{\sol}, t, \param) := \massmatrix \dot{\sol} - \ffunction(\sol,t, \param) = 0.
 \end{equation} 
 
 The time derivative term above can be approximated by various time integration schemes. 
 Suppose the temporal domain $[0, \finalTime]$ is partitioned by $\{
  \timek{\timeIndex} \}_{\timeIndex=0}^{\ntimestep}$, where 
  $\ntimestep$ is the number of subintervals, $\timek{\timeIndex}$ denotes
a discrete moment in time with $\timek{0} = 0$, $\timek{\ntimestep} =
\finalTime$, and $\timek{\timeIndex-1} < \timek{\timeIndex}$ for 
$\timeIndex\in\{1,\ldots,\ntimestep\}$. Throughout the paper, we 
use a superscript $\timeIndex$ to denote the time-discrete counterpart
of a function evaluated at $t = \timek{\timeIndex}$. 
 For the numerical experiments, we use the implicit Backward Euler (BE), method and the second order explicit Runge--Kutta average (RK2-average) method to numerically solve Eq.~\eqref{eq:fom}, but other numerical time integration schemes are also applicable. For example, the BE method solves for $\sol^n$ at the $n$-th time step in Eq.~\eqref{eq:implicitfom}:
 \begin{equation} \label{eq:implicitfom}
  \massmatrix \sol^n - \massmatrix \sol^{n-1} = \Delta t \ffunction^n,
 \end{equation}
 where $\ffunction^n = \ffunction(\sol^{n},t_n, \param)$
 and $t_n$ is the $n$-th time.
 The residual function with the BE time integrator is then defined by
 \begin{equation} \label{eq:implicitres}
  \res^n_{\text{BE}}(\sol^n; \sol^{n-1}, \param) := \massmatrix (\sol^n - \sol^{n-1}) - \Delta t \ffunction^n.
 \end{equation}
 Although we continue the discussion using the BE time integrator and its residual as an example of the demonstration, the residuals of other types of time integrators can replace $\res^n_{\text{BE}}$ in a similar fashion. For example, we refer to \cite{copeland2022reduced} for the residual of the RK2-average method.

% -----------------------
% -     LS-ROM          -
% -----------------------
\subsection{Projection-Based Reduced Order Model}\label{sec:PROM}
 A PROM
%  projection-based reduced order model 
 formulation relies on the concept that 
 full state solutions can be represented in lower-dimensional manifold. 
 As such, a PROM projects the governing equations to a manifold, resulting in lower-dimensional equations. If the manifolds for the solution field and the equations are the same, then the projection is called Galerkin. On the other hand, if the manifolds for the solution field and the equations are different, then the projection is called Petrov--Galerkin.  
 Both the Galerkin and Petrov--Galerkin projection methods are considered.
 We also consider a PROM with a linear subspace solution representation (LS-ROM), 
 as well as a model with nonlinear manifold solution representation (NM-ROM).

\subsubsection{Linear Subspace Reduced Order Model}\label{sec:LS-ROM}
A LS-ROM
% linear subspace reduced order model 
reduces the spatial dimension by approximating the full solution 
using a subspace 
% $\mathcal{W} := \text{span}\{\basis^{(i)} \in \mathbb{R}^N : i=1,\dots,\nbasisspace\}$ 
$\mathcal{W} := \text{span}\{\basiscol_i \in \mathbb{R}^N : i=1,\dots,\nbasisspace\}$ 
with 
$\text{dim}(\mathcal{W}) = \nbasisspace \ll \fullstaten$, also called a \textit{trial} subspace. 
The approximation $\approxsol$ of the full solution is
  \begin{equation} 
  \label{eq:lsrom}
   \approxsol = \sol_{\text{ref}} + \basis \gencoord \in \mathbb{R}^N,
 \end{equation} 
 where 
%  $\approxsol \in \RR^\fullstaten$ denotes an approximation of the full solution, 
 $\sol_{\text{ref}} \in \RR^\fullstaten$ is a reference state, 
 $\basis \in \RR^{\fullstaten \times \nbasisspace}$ denotes a basis matrix whose $i$-th column is 
%  $\basis^{(i)}$ 
 $\basiscol_i$, 
 and $\gencoord \in \RR^\nbasisspace$ is a vector of unknown generalized coordinates. 
 
 The reduced subspace, $\basis$, is commonly found using POD
%  proper orthogonal decomposition (POD)
 \cite{pod}, 
 which is related to principal component analysis (PCA) in statistical analysis 
 and the Karhunen-Lo\`eve expansion in stochastic analysis \cite{hotelling1933analysis, loeve1955}. 
 While we use POD to define the basis in this paper, the basis can be built using other options, such as Fourier modes. 
 In POD, a set of basis functions are built by performing a singular value decomposition (SVD) 
 over a solution snapshot matrix. These snapshots are based on solutions of the FOM,
%  full order model, 
 either steady-state solutions for multiple parameters, or time-varying solutions. 
 Additionally, the reference state, $\sol_{\text{ref}}$, can be found by taking the average of the collected snapshots. 
 
 Substituting $\tilde{\sol}$ of Eq.~\eqref{eq:lsrom} for the full solution in Eq.~\eqref{eq:fom} results in a system of equations with fewer unknowns,
%  assuming $\nbasisspace \ll \fullstaten$: 
 \begin{equation} \label{eq:sol-sub}
  \res(\sol_{\text{ref}} + \basis \gencoord, \basis \dot{\gencoord}, t,\param) := \massmatrix \basis \dot{\gencoord} - \ffunction(\sol_{\text{ref}} + \basis \gencoord,t, \param) = 0,
 \end{equation} 
where $\dot{\gencoord}$ denotes the time derivative of the generalized coordinate $\gencoord$. 
Since $\basis$ and $\sol_{\text{ref}}$ are fixed,
let
$\redres (\gencoord, \boldsymbol{v}, t,\param) := \res(\sol_{\text{ref}} + \basis \gencoord, \basis \boldsymbol{v}, t,\param)$.
Note that $\redres(\gencoord, \boldsymbol{v}, t,\param)$ is linear in $\boldsymbol{v}$.
By projecting the system of equations onto 
a \textit{test} subspace with basis matrix $\projmatrix \in \RR^{\fullstaten \times \nbasisspace}$ (which can be different than $\basis$) whose $i$-th column is $\projcol_i$, 
that is,
\begin{align*}
    \left\langle \projcol_i, \redres (\gencoord, \boldsymbol{v}, t,\param) \right\rangle = 0, 
    \qquad i = 1,\dots,k,
\end{align*}
we solve for $\boldsymbol{v}$,
which gives 
% where $\projmatrix^{(i)}$ is the $i$-th column of $\projmatrix$,
the governing equation for $\gencoord$
% Solving for $\boldsymbol{v}$ determines $\dot{\gencoord}$ as follows:
\begin{equation} \label{eq:proj}
  \dot{\gencoord} = (\projmatrix^\top\massmatrix \basis)^{-1} \projmatrix^\top \ffunction(\sol_{\text{ref}} + \basis \gencoord,t, \param).
\end{equation} %
%
% which now has $\nbasisspace$ equations and $\nbasisspace$ unknowns. 
%  While for Galerkin projection the equations are projected onto the trial subspace $\basis$, 
%  a Petrov--Galerkin projection generalizes to a projection onto a \textit{test} subspace, 
%  $\projmatrix \in \RR^{\fullstaten \times \nbasisspace}$, which can be different than $\basis$. 

 Equation~\eqref{eq:proj} corresponds to Galerkin projection when the test subspace is the same as the trial subspace, i.e. $\projmatrix = \basis$. 
 When the test subspace differs, we have Petrov--Galerkin projection. However, Petrov--Galerkin projection is generally applied after discretizing
 in time, which leads us to describe the nonlinear Least-Squares Petrov--Galerkin (LSPG) projection procedure.
 A LSPG ROM substitutes $\approxsol^n = \sol_{\text{ref}} + \basis \gencoord^n$ for $\sol$
 into Eq.~\eqref{eq:implicitres} and minimizes the residual at each time instance. Using the BE time
 discretization as an example, we have
 \begin{equation} \label{eq:lspg}
     \gencoord^n = \argmin_{\boldsymbol{v} \in \RR^\nbasisspace} 
     \left\| \redres_{\text{BE}}^n (\boldsymbol{v}; \gencoord^{n-1}, \param ) \right\|^2,
 \end{equation}
where the LS-ROM backward Euler reduced residual is defined as
 \begin{equation} \label{eq:rbe}
  \begin{split}
     \redres_{\text{BE}}^n (\boldsymbol{v}; \gencoord^{n-1}, \param ) 
     :=& \res_{\text{BE}}^n(\sol_{\text{ref}} + \basis \boldsymbol{v}; \sol_{\text{ref}} + \basis \gencoord^{n-1}, \param) \\
     =& \massmatrix \basis (\boldsymbol{v} - \gencoord^{n-1}) - \Delta t \ffunction(\sol_{\text{ref}} + \basis \boldsymbol{v},t_n,\param).
  \end{split}
 \end{equation}
%
%  where $\redres (\gencoord, \dot{\gencoord}, t,\param): \RR^\nbasisspace \times \RR^\nbasisspace \times \RR 
%  \times \mathcal{D} \to \RR^\nbasisspace $ is the reduced residual. 
The necessary first-order optimality condition for Eq.~\eqref{eq:lspg} is
 \begin{equation*}
     (\jacobian^n \basis)^\top \redres_{\text{BE}}^n = 0,
     \quad \text{where} \quad 
     \jacobian^n = \massmatrix - \Delta t \jacobian_{\ffunction(\cdot,t_n,\param)}(\sol_{\text{ref}} + \basis \gencoord^{n}).
 \end{equation*}
 Here $\jacobian_{\ffunction(\cdot,t_n,\param)} \in \RR^{\fullstaten \times \fullstaten}$ is the Jacobian of $\ffunction(\cdot,t_n,\param)$.
 This shows that 
Eq.~\eqref{eq:lspg} corresponds to a Petrov--Galerkin projection of the FOM equations with a trial subspace $\basis$
and a test subspace $\projmatrix = \jacobian^n \basis$, hence the name
LSPG
% Least-Squares Petrov--Galerkin 
projection.

A note on notation: throughout this paper, the hat ($\hat{\res}$) is used for reduced variables that lie in a smaller subspace such as $ \RR^\nbasisspace$, 
and the tilde ($\tilde{\sol}$) is used for variables in $\RR^\fullstaten$ that approximate their non-accented counterparts.

% -----------------------
% -      NM-ROM         -
% -----------------------
\subsubsection{Nonlinear Manifold Reduced Order Model}\label{sec:nm-rom}
The LS-ROM relies on a linear subspace ($\basis$) for the solution manifold. 
In this section, we outline a ROM with nonlinear solution representation.
As a generalization of the linear subspace representation in \eqref{eq:lsrom}, 
NM-ROM seeks approximation of the full state solutions in a trial manifold by
  \begin{equation} \label{eq:nmrom}
   \approxsol = \sol_{\text{ref}} + \gfunction(\gencoord), 
 \end{equation} 
 where $\approxsol \in \RR^\fullstaten$ denotes an approximation of the full solution, 
 $\sol_{\text{ref}} \in \RR^\fullstaten$ denotes a reference state, 
 $\boldsymbol{g}: \RR^{\nbasisspace} \to \RR^{\fullstaten}$ denotes a nonlinear function, 
 and $\gencoord \in \RR^\nbasisspace$ denotes a vector of unknown latent variables. 
 The nonlinear function is found by neural network training as a decoder in an autoencoder architecture. 
 In this work, we adopt the shallow masked autoencoder as in \cite{kim2022fast}, 
 where the nonlinear function $\boldsymbol{g}$ is a scaled decoder with one single hidden layer 
 and a sparsity mask in the output layer. The trainable parameters, i.e. the weight and bias in the 
 decoder and the encoder networks, are optimized against the mismatch between the original training data 
 and the corresponding autoencoder output. The trained decoder and its Jacobian, $\nmjacobian \in \RR^{\fullstaten \times \nbasisspace}$,
 is used to formulate the system of equations. More precisely, the counterpart of \eqref{eq:sol-sub} is 
  \begin{equation*} 
  \res(\sol_{\text{ref}} + \gfunction(\gencoord), \nmjacobian(\gencoord) \dot{\gencoord}, t, \param) 
  := \massmatrix \nmjacobian(\gencoord) \dot{\gencoord} 
  - \ffunction(\sol_{\text{ref}} + \gfunction(\gencoord),t, \param) = 0. 
 \end{equation*} 
 The NM-ROM Galerkin projection is then given by  
  \begin{equation*} 
  \dot{\gencoord} = 
  \left( \nmjacobian(\gencoord)^\top \massmatrix \nmjacobian(\gencoord) \right)^{-1}
  \nmjacobian(\gencoord)^\top \ffunction(\sol_{\text{ref}} + \gfunction(\gencoord),t, \param).
 \end{equation*} 
Similarly, the NM-ROM LSPG projection is given by 
  \begin{equation*} 
  \gencoord^{n} = \argmin_{\boldsymbol{v} \in \RR^\nbasisspace} \left\| 
  \redres^n_{\text{BE}}(\boldsymbol{v}; \gencoord^{n-1}, \param) \right\|^2,  
 \end{equation*}
 where the NM-ROM backward Euler reduced residual is defined as 
 \begin{equation*}
 \begin{split}
     \redres^n_{\text{BE}}(\boldsymbol{v}; \gencoord^{n-1}, \param)
     :=& \res^n_{\text{BE}}(\sol_{\text{ref}} + \gfunction(\boldsymbol{v}); \sol_{\text{ref}} + \gfunction(\gencoord^{n-1}), \param) \\
      =& \massmatrix \nmjacobian \left(\gfunction(\boldsymbol{v}) 
      - \boldsymbol{g}(\gencoord^{n-1})\right) - \Delta t \ffunction(\sol_{\text{ref}} + \gfunction(\boldsymbol{v}),t_n, \param). 
 \end{split}
 \end{equation*}

A note on time integrators: although the BE method is used for the discussion of LS-ROM and NM-ROM, other numerical time integrators can be used in a similar fashion.

% ----------------------------------
% -  Hyper-reduction and Sampling  -
% ----------------------------------
\section{Hyper-Reduction and Sampling}\label{sec:HR}
Due to the term $\ffunction$ still being nonlinear in the reduced subspace, 
the requirement to compute the Jacobian, and to use the Jacobian in a matrix-matrix 
multiplication for LSPG, the reduced, low-dimensional equations may still not be 
computationally more efficient than the FOM. However, the ROM
framework allows for a further approximation of the equations using a hyper-reduction strategy. 

Specifically, when Galerkin projection is utilized, the $(\projmatrix^\top\massmatrix \basis)^{-1} \projmatrix^\top$ 
part of Eq.~\eqref{eq:proj} is time-independent, and can be computed offline for each parameter $\param$. 
This leaves the nonlinear function $\ffunction(\approxsol,t, \param)$ 
to be calculated at every iteration. However, $\ffunction$ still scales with the full dimension.
Furthermore, for Petrov--Galerkin projection, the projection matrix $\projmatrix$ must be calculated
at every iteration because the Jacobian relies on the current state. 
In the case of NM-ROM,
both projection methods rely on the Jacobian. 
The full dimension
dependency in the Jacobian and the nonlinear function limits the speed-up performance of the PROM.

There are two ways hyper-reduction can be applied. Firstly, the nonlinear term is approximated 
so that it scales with the reduced dimension. This is applicable when the projection matrix can 
be computed offline, so approximating only the nonlinear term results in sufficient computational savings. Secondly, the residual is approximated instead of independently working with the nonlinear term.
This is especially favorable in cases where the nonlinear term may be difficult to access in the FOM solver,
such as for residual minimization solvers. 
Either way, the aforementioned terms are approximated by using only a carefully constructed index subset and 
omitting gaps for ignored entries, 
so this procedure 
is called gappy tensor approximation, as first introduced in \cite{everson1995}.

\textbf{Hyper-reduction on $\ffunction$.}
The nonlinear term $\ffunction \in \RR^\fullstaten$ can be approximated in a least-squares sense
from a gappy form using a reduced basis $\basis_{\ffunction} \in \RR^{\fullstaten \times \nbasissource}$ 
built for the nonlinear term and the set of sampled indices, $\sampset = \{i_1,\dots,i_n\}$.
Given a gappy tensor $\gappyf \in \RR^{\nbasissource}$, the approximation $\approxf$ of $\ffunction$ is:
\begin{align}
    \nonumber \ffunction &\approx \approxf:= \basis_{\ffunction} \gappyf, \quad \text{with} \\
    \gappyf &= \argmin_{\bm{a} \in \RR^{\nbasissource}} \| \samplingmat^\top (\basis_{\ffunction}\bm{a} - \ffunction) \|, \label{eq:gappyLS}
\end{align}
where $\samplingmat = [\boldsymbol{e}_{i_1}, \cdots, \boldsymbol{e}_{i_n}] \in \RR^{\fullstaten \times n}$ 
is a sampling matrix which contains $\nbasissource \le n \ll \fullstaten$ columns of the $\fullstaten \times \fullstaten$ identity matrix,
and $\boldsymbol{e}_i$ is the standard basis in $\mathbb{R}^N$.
In other words, $\samplingmat$ samples $\ffunction$ at only $n$ indices.
Choosing which indices to select is the subject of Section~\ref{sec:salgs}. 
The (minimum norm) solution to Eq.~\eqref{eq:gappyLS} is
$\gappyf = (\samplingmat^\top \sourcebasis)^{\dagger} \samplingmat^\top \ffunction$
% \begin{equation*}
%     \gappyf = (\samplingmat^\top \sourcebasis)^{\dagger} \samplingmat^\top \ffunction,
% \end{equation*}
%
where the superscript $\dagger$ denotes the Moore-Penrose pseudoinverse. 
A hyper-reduction of the nonlinear term $\ffunction$ 
is then given as
%in addition to a simplified case in which $\sourcebasis = \boldsymbol{I}$ (and therefore $\nbasissource = N$):
%
%{\color{blue} YS: Assuming $N = n_f$ in this case?}
\begin{equation} \label{eq:hrnonlinearterm}
    \approxf = \sourcebasis (\samplingmat^\top \sourcebasis)^{\dagger} \samplingmat^\top \ffunction,
    %\approxf = \samplingmat \samplingmat^\top \ffunction,
\end{equation}
% where $\approxf \in \RR^\fullstaten$ is an approximation of the nonlinear term.
which is termed as ``gappy POD" hyper-reduction.
We remark that, while $\sourcebasis \sourcebasis^\dagger$ is 
an orthogonal projection onto the column space of $\sourcebasis$, 
the sampling procedure introduces an oblique projection 
$\sourcebasis (\samplingmat^\top \sourcebasis)^{\dagger} \samplingmat^\top$.
% Approximating $\ffunction$ using the reduced basis $\sourcebasis$ is

% To summarize, gappy-POD hyper-reduction approximates the full nonlinear term using a basis for projection
% $\basis_{\ffunction}$, and a sampling matrix $\samplingmat$.

The procedure that combines Galerkin projection and gappy POD for the nonlinear term is known as the discrete empirical 
interpolation method (DEIM). In the original DEIM paper \cite{chaturantabut2010nonlinear}, $\sourcebasis$ was computed by 
collecting snapshots of the nonlinear term and computing the POD. 
However, it has also been shown that solution snapshots can be used through the subspace relation in \cite{choi2020sns}, i.e.,  
$\sourcebasis = \massmatrix\basis$. 

One can consider a simple approximation
$\approxf = \samplingmat \samplingmat^\top \ffunction$
to $\ffunction$,
which only requires $\samplingmat$.
This is termed ``collocation." 
Even in this case, however, $\sourcebasis$ may still be used to build $\samplingmat$ during the offline phase, 
yet is not used during the online phase of the ROM. This is justified by the assumption that $\ffunction$ is well represented by the subspace spanned by $\sourcebasis$.

\textbf{Hyper-reduction on the residual term.}
Given $\bm{v}$, $t_n$, $\bm{y}^{n-1}$ and $\bm{\mu}$,
the nonlinear term $\bm{r}:= \redres_{\text{BE}}^n (\bm{v};\bm{y}^{n-1},\bm{\mu})$ (defined in Eq.~\eqref{eq:rbe})
is similarly approximated by 
a gappy tensor $\gappyr \in \RR^{\nbasisres}$ using a reduced basis 
$\resbasis \in \RR^{\fullstaten \times \nbasisres}$ built for the residual and a set of sampled indices, $\sampset=\{i_1,\dots,i_n\}$:
\begin{align}
    \nonumber
    \res &\approx \approxres := \resbasis \gappyr, \quad \text{with}   \\
    \gappyr &= \argmin_{\bm{a} \in \RR^{\nbasisres}} \| \samplingmat^\top (\resbasis \bm{a} - \res) \|, \label{eq:gappyLSres}
\end{align}
where $\samplingmat = [\bm{e}_{i_1}, \cdots, \bm{e}_{i_n}] \in \RR^{\fullstaten \times n}$ is the sampling matrix
constructed from $\sampset$.
% which contains $\nbasisres \le n \ll \fullstaten$ columns of the $\fullstaten \times \fullstaten$ identity matrix, 
% and each column corresponds to a selected index from the set $\sampset$.
The (minimum norm) solution to Eq.~\eqref{eq:gappyLSres} is 
$\gappyr = (\samplingmat^\top \resbasis)^{\dagger} \samplingmat^\top \res$
% \begin{equation}
%     \gappyr = (\samplingmat^\top \resbasis)^{\dagger} \samplingmat^\top \res.
% \end{equation}
%
and the corresponding hyper-reduction of the residual term $\res$ is 
%  Eq.~\eqref{eq:res_approx}:
\begin{equation} \label{eq:res_approx}
    \approxres = \resbasis (\samplingmat^\top \resbasis)^{\dagger} \samplingmat^\top \res,
    %\approxres = \samplingmat \samplingmat^\top \res,
\end{equation}
%
% where $\approxres \in \RR^\fullstaten$ is an approximation of the residual. 
which is the gappy-POD approach to hyper-reduction.
Similarly as before,
one can also use
the collocation approximation
$\approxres = \samplingmat \samplingmat^\top \res$.
%$\resbasis \in \RR^{\fullstaten \times \nbasisres}$ is a reduced basis matrix built for the residual, 
%$\samplingmat \in \RR^{\fullstaten \times n}$ is a sampling matrix,
%approximating $\res$ using $\resbasis$ is a gappy POD approach, and only using the sampling matrix is a collocation approach.
% Likewise to DEIM, 
The procedure that combines LSPG projection and gappy POD hyper-reduction for the residual term is called the Gauss-Newton with approximated tensors (GNAT) procedure \cite{carlberg2011efficient}.

\textbf{Error analysis of hyper-reduction approximation.}
In practice, the sampling matrix $\samplingmat$ is not built; rather, the selected indices are maintained along with 
the corresponding rows of $\sourcebasis, \resbasis, \ffunction$ and $\res$. 
Regardless of which approach is used,
the goal is to approximate a vector $\bm{b}$
of size $N$ (either $\bm{b} = \bm{f}$ or $\bm{r}$)
using a predefined basis $\bm{M} \in \mathbb{R}^{N\times p}$
(either $\bm{M} = \basis_{\bm{f}}$, $p=\nbasissource$ or $\bm{M} = \basis_{\bm{r}}$, $p=\nbasisres$).
Hence, we are faced with the following 
optimization problem: 
Find the optimal sampling matrix $\samplingmat^* \in \mathbb{R}^{N\times n}$ such that 
\begin{equation} \label{true-OPT}
    \samplingmat^* =\argmin_{\samplingmat} \|\bm{b} - \tilde{\bm{b}}(\samplingmat) \|,
\end{equation}
where $\tilde{\bm{b}}(\samplingmat):= \bm{M}\tilde{\bm{a}}(\bm{Z})$
and $\tilde{\bm{a}}(\samplingmat) = \argmin_{\bm{a}} \|\samplingmat^\top \bm{M} \bm{a} - \samplingmat^\top \bm{b}\|$.
In Theorem~\ref{thm:error-estimate},
we quantify and estimate
the oblique projection error 
$\|\bm{b} - \tilde{\bm{b}}(\samplingmat) \|$,
which becomes the theoretical basis of 
many existing sampling methods.

\begin{theorem} \label{thm:error-estimate}
    Let $\samplingmat \in \mathbb{R}^{N\times n}$
    be a sampling matrix 
    and $\bm{M} \in \mathbb{R}^{N\times p}$ be 
    a basis matrix of full rank with $p \le n \le N$.
    Let $\tilde{\bm{a}}({\samplingmat}) = \argmin_{\bm{a}} \|\samplingmat^{\top}\bm{M}\bm{a} - \samplingmat^{\top}\bm{b}\|$
    and let $\tilde{\bm{b}}(\samplingmat):= \bm{M}\tilde{\bm{a}}({\samplingmat})$.
    Suppose $\samplingmat^\top \bm{M}$ is of full rank.
    Then,
    \begin{equation} \label{error_equality}
        \|\bm{b} - \tilde{\bm{b}}(\samplingmat)\|^2
        = \|\text{proj}_{\bm{M}^\perp} \bm{b} \|^2 + \|\epsilon(\samplingmat)\|^2,
    \end{equation}
    where 
    $\bm{M}=\bm{QR}$ is a QR factorization of $\bm{M}$,
    $\text{proj}_{\bm{M}^\perp} \bm{b} := (\bm{I}-\bm{QQ}^\top)\bm{b}$
    is the projection of $\bm{b}$
    onto the orthogonal complement of the column space of $\bm{M}$,
    and 
    $\bm{\epsilon}(\samplingmat)$ is the solution
    to $\min_{\bm{\epsilon}} \| \samplingmat^\top \bm{Q \epsilon} - \samplingmat^\top \text{proj}_{\bm{M}^\perp} \bm{b} \|$
    satisfying 
    % \begin{equation*} 
    $
        ((\samplingmat^\top \bm{Q})^\top \samplingmat^\top \bm{Q})\bm{\epsilon} =
        (\samplingmat^\top \bm{Q})^\top 
        \samplingmat^\top \text{proj}_{\bm{M}^\perp} \bm{b}.
    % \end{equation*}
    $
    Furthermore,
    \begin{equation} \label{error_inequality}
        \|\bm{b} - \tilde{\bm{b}}(\samplingmat)\| \le \|(\samplingmat^\top \bm{Q})^\dagger\| \cdot \|\text{proj}_{\bm{M}^\perp} \bm{b}\|.
    \end{equation}
\end{theorem}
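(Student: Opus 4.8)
The plan is to reduce everything to the orthonormal factor $\bm{Q}$ and the small ``sampled'' matrix $\bm{W} := \samplingmat^\top \bm{Q} \in \mathbb{R}^{n\times p}$, and then to recognize the oblique projector whose fixed space is $\mathrm{col}(\bm{M})$. First I would record the structural facts that make this reduction legitimate. Writing $\bm{M} = \bm{Q}\bm{R}$ with $\bm{R}$ invertible (as $\bm{M}$ has full column rank), the hypothesis that $\samplingmat^\top\bm{M} = \bm{W}\bm{R}$ has full rank forces $\bm{W}$ to have full column rank $p$, so $\bm{W}^\dagger \bm{W} = \bm{I}_p$ and $(\samplingmat^\top\bm{M})^\dagger = \bm{R}^{-1}\bm{W}^\dagger$. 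Consequently the interpolant has the closed form $\tilde{\bm{b}}(\samplingmat) = \bm{M}(\samplingmat^\top\bm{M})^\dagger \samplingmat^\top \bm{b} = \bm{Q}\,\bm{W}^\dagger \samplingmat^\top \bm{b} =: \mathbb{P}\bm{b}$, and since $\mathrm{col}(\bm{M}) = \mathrm{col}(\bm{Q})$, the stated orthogonal projection is $\mathrm{proj}_{\bm{M}^\perp}\bm{b} = (\bm{I} - \bm{Q}\bm{Q}^\top)\bm{b}$.

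For the equality \eqref{error_equality} I would split $\bm{b} = \bm{Q}\bm{Q}^\top \bm{b} + \bm{r}$ with $\bm{r} := \mathrm{proj}_{\bm{M}^\perp}\bm{b} \perp \mathrm{col}(\bm{Q})$, and exploit linearity of the least-squares map $\bm{c} \mapsto \argmin_{\bm{d}}\|\bm{W}\bm{d} - \samplingmat^\top \bm{c}\| = \bm{W}^\dagger \samplingmat^\top \bm{c}$. Since $\samplingmat^\top \bm{b} = \bm{W}\bm{Q}^\top \bm{b} + \samplingmat^\top \bm{r}$, shifting the right-hand side gives $\bm{W}^\dagger \samplingmat^\top \bm{b} = \bm{Q}^\top \bm{b} + \bm{\epsilon}$, where $\bm{\epsilon} := \bm{W}^\dagger \samplingmat^\top \bm{r}$ is exactly the minimizer of $\|\samplingmat^\top \bm{Q}\bm{\epsilon} - \samplingmat^\top \mathrm{proj}_{\bm{M}^\perp}\bm{b}\|$, i.e. the solution of the normal equations displayed in the theorem. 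Hence $\tilde{\bm{b}} = \bm{Q}\bm{Q}^\top \bm{b} + \bm{Q}\bm{\epsilon}$ and $\bm{b} - \tilde{\bm{b}} = \bm{r} - \bm{Q}\bm{\epsilon}$. The two summands are orthogonal ($\bm{r} \perp \mathrm{col}(\bm{Q})$ while $\bm{Q}\bm{\epsilon} \in \mathrm{col}(\bm{Q})$), so Pythagoras together with $\|\bm{Q}\bm{\epsilon}\| = \|\bm{\epsilon}\|$ yields $\|\bm{b} - \tilde{\bm{b}}\|^2 = \|\bm{r}\|^2 + \|\bm{\epsilon}\|^2$, which is \eqref{error_equality}.

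For the inequality \eqref{error_inequality} I would argue through the projector $\mathbb{P} = \bm{Q}\bm{W}^\dagger\samplingmat^\top$. Using $\bm{W}^\dagger \bm{W} = \bm{I}_p$ one checks $\mathbb{P}^2 = \mathbb{P}$ and $\mathbb{P}\bm{v} = \bm{v}$ for every $\bm{v} \in \mathrm{col}(\bm{M})$; choosing $\bm{v} = \bm{Q}\bm{Q}^\top \bm{b}$ gives $\bm{b} - \tilde{\bm{b}} = (\bm{I} - \mathbb{P})\bm{b} = (\bm{I} - \mathbb{P})\bm{r}$. Submultiplicativity then bounds $\|\bm{b} - \tilde{\bm{b}}\| \le \|\bm{I} - \mathbb{P}\|\,\|\bm{r}\|$, and since $\bm{Q}$ has orthonormal columns and $\samplingmat$ is a selection matrix (so $\|\bm{Q}\| = \|\samplingmat^\top\| = 1$) we have $\|\mathbb{P}\| \le \|\bm{Q}\|\,\|\bm{W}^\dagger\|\,\|\samplingmat^\top\| = \|(\samplingmat^\top\bm{Q})^\dagger\|$. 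The remaining ingredient is the operator-norm identity $\|\bm{I} - \mathbb{P}\| = \|\mathbb{P}\|$, valid for any idempotent $\mathbb{P} \notin \{\bm{0}, \bm{I}\}$ (the degenerate cases making both sides of the bound vanish), which upgrades the estimate to the claimed $\|(\samplingmat^\top\bm{Q})^\dagger\|\,\|\mathrm{proj}_{\bm{M}^\perp}\bm{b}\|$.

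I expect this last identity to be the main obstacle. A naive estimate of $\|\bm{\epsilon}\|$ directly from \eqref{error_equality} only delivers the weaker constant $\sqrt{1 + \|(\samplingmat^\top\bm{Q})^\dagger\|^2}$, so obtaining the clean factor $\|(\samplingmat^\top\bm{Q})^\dagger\|$ genuinely requires the fact that a nontrivial oblique projector and its complement share the same spectral norm (equivalently, a CS-decomposition relation between the complementary row blocks $\samplingmat^\top\bm{Q}$ and $\samplingmat^\top\bm{Q}_\perp$ of the orthogonal matrix $[\bm{Q}\ \bm{Q}_\perp]$). Everything else is routine bookkeeping with the QR factorization and the Pythagorean theorem.
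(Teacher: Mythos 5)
Your proof is correct, and its skeleton coincides with the paper's: both arguments split the error as $\bm{b}-\tilde{\bm{b}}(\samplingmat) = \text{proj}_{\bm{M}^\perp}\bm{b} - \bm{Q}\bm{\epsilon}(\samplingmat)$, apply the Pythagorean theorem to get \eqref{error_equality}, and then rewrite the error as $(\bm{I}-\bm{Q}(\samplingmat^\top\bm{Q})^\dagger\samplingmat^\top)\,\text{proj}_{\bm{M}^\perp}\bm{b}$ and exploit that the middle matrix is an oblique projector. The differences are in provenance and explicitness. The paper obtains the key identity $\bm{M}\bm{a}^*-\bm{M}\tilde{\bm{a}}(\samplingmat)=-\bm{Q}\bm{\epsilon}(\samplingmat)$ (with $\bm{a}^*=\tilde{\bm{a}}(\bm{I})$) by citing Theorem 3.1 of Shin--Xiu and translating notation, whereas you derive it directly from the linearity of the sampled least-squares map $\bm{c}\mapsto(\samplingmat^\top\bm{Q})^\dagger\samplingmat^\top\bm{c}$ together with $(\samplingmat^\top\bm{Q})^\dagger(\samplingmat^\top\bm{Q})=\bm{I}_p$; this makes the theorem self-contained, a modest but genuine improvement over outsourcing the algebra. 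For \eqref{error_inequality}, the paper's terse closing step (``since $\bm{Q}(\samplingmat^\top\bm{Q})^\dagger\samplingmat^\top$ is a projection operator, the proof is completed by observing $\|\bm{Q}\|=\|\samplingmat^\top\|=1$'') silently invokes exactly the fact you isolate: a nontrivial idempotent and its complement have equal spectral norm, so $\|\bm{I}-\mathbb{P}\|=\|\mathbb{P}\|\le\|\bm{Q}\|\,\|(\samplingmat^\top\bm{Q})^\dagger\|\,\|\samplingmat^\top\|$. You are right that this norm identity is the crux --- naive estimates only yield the weaker constant $\sqrt{1+\|(\samplingmat^\top\bm{Q})^\dagger\|^2}$ --- and making it explicit is a virtue. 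One small refinement to your degenerate-case remark: $\mathbb{P}=\bm{0}$ cannot occur, since full column rank of $\samplingmat^\top\bm{Q}$ gives $\mathbb{P}\bm{Q}=\bm{Q}\ne\bm{0}$, while $\mathbb{P}=\bm{I}$ forces $p=N$, in which case $\text{proj}_{\bm{M}^\perp}\bm{b}=\bm{0}$ and both sides of \eqref{error_inequality} indeed vanish.
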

\begin{proof}
    Let $\bm{a}^* = \tilde{\bm{a}}(\bm{I})$.
    Observe that 
    the quantities $A_M, Q_M, y_M, S_m, \hat{\beta}_M, \hat{\beta}_m$
    stated in Theorem 3.1 of \cite{shin16}
    are $\bm{M}, \bm{Q}, \bm{b}, \samplingmat^\top, \bm{a}^*, \bm{a}_{\samplingmat}$, respectively,
    in the notation of the current paper.
    It then follows from 
    Theorem 3.1 of \cite{shin16}
    that $\bm{Ma}^* - \bm{M}\tilde{\bm{a}}(\samplingmat) = -\bm{Q\epsilon}(\samplingmat)$.
    Since $\bm{M}\bm{a}^* = \bm{QQ}^\top \bm{b}$,
    we have
    \begin{align*}
        \bm{b} - \tilde{\bm{b}}(\samplingmat)
        &= \bm{b} - \bm{M}\tilde{\bm{a}}(\samplingmat)
        = \bm{b} - \bm{M}\bm{a}^* + \bm{M}\bm{a}^* - \bm{M}\tilde{\bm{a}}(\samplingmat) 
        = (\bm{I}-\bm{QQ}^\top)\bm{b} - \bm{Q\epsilon}(\samplingmat),
    \end{align*}
    which gives \eqref{error_equality}.
    Furthermore, it follows from 
    $\bm{\epsilon}(\samplingmat) = (\samplingmat^\top \bm{Q})^\dagger \samplingmat^\top \text{proj}_{\bm{M}^\perp}\bm{b}$
    and the above equality
    that 
    \begin{align*}
        \bm{b} - \tilde{\bm{b}}(\samplingmat)
        = (\bm{I} - \bm{Q}(\samplingmat^\top \bm{Q})^\dagger \samplingmat^\top)\text{proj}_{\bm{M}^\perp}\bm{b}.
    \end{align*}
    Since $\bm{Q}(\samplingmat^\top \bm{Q})^\dagger \samplingmat^\top$
    is a projection operator,
    the proof is completed 
    by observing 
    $\|\bm{Q}\| = \|\samplingmat^\top\| = 1$.
    % we obtain \eqref{error_inequality}.
\end{proof}

\begin{remark} \label{remark:1}
    Theorem~\ref{thm:error-estimate} shows that 
the optimization problem \eqref{true-OPT}
is equivalent to 
\begin{equation} \label{true-OPT2}
    \samplingmat^* = \argmin_{\samplingmat} \|\bm{\epsilon}(\samplingmat)\|,
\end{equation}
as $\|\text{proj}_{\bm{M}^\perp} \bm{b}\|$ is independent of the sampling matrix $\samplingmat$.
Yet, since the optimal sampling matrix
$\samplingmat^*$ requires 
the full knowledge of $\bm{b}$,
the true optimality is hardly achieved
in practice.
\end{remark}

\begin{remark}
    Theorem~\ref{thm:error-estimate} 
    requires neither the column orthonormality of 
    $\bm{M}$
    nor the number $n$ of samples being 
    the same as the number $p$ of columns of $\bm{M}$.
    Hence, it
    can be viewed as a generalization of
    Lemma 3.2 of \cite{chaturantabut2010nonlinear}.
    For a particular case of $n = p$,
    since $(\samplingmat^\top \bm{Q})^\dagger = (\samplingmat^\top \bm{Q})^{-1}$,
    the error bound of \eqref{error_inequality}
    becomes 
    Equation 3.8 of \cite{chaturantabut2010nonlinear}.
\end{remark}

% -----------------------
% -    Sampling Algs    -
% -----------------------
\subsection{Sampling Algorithms}\label{sec:salgs}

One of the most popular algorithms for the construction of the sampling matrix $\samplingmat$
is based on the error \textit{bound} of \eqref{error_inequality}.
These algorithms aim at constructing $\samplingmat$
that makes $\|(\samplingmat^\top \bm{Q})^{\dagger}\|$
as small as possible. 
This may be viewed as E-optimality \cite{pukelsheim2006optimal}
in the optimal design community, which maximizes the smallest  nonzero eigenvalue
of $(\samplingmat^\top \bm{Q})^\top \samplingmat^\top \bm{Q}$.
The commonly used algorithms in the PROM community, 
such as DEIM \cite{chaturantabut2010nonlinear}, Q-DEIM \cite{drmac2016new}, 
follow this principle. 

%{\color{blue} YS: Please add (if you know any) other sampling methods that do or do NOT follow this principle.}
% JL: I'm still mulling this over... I know of MPE and cubature methods, need to research if they follow this principle or not

In the UQ community,
a similar problem has been addressed
in the context of least squares based stochastic collocation \cite{shin16, shin2016near,narayan2017christoffel,hadigol2018least,guo2020constructing}.
In particular, \cite{shin16} proposed a method
based on the error \textit{equality} of \eqref{error_equality}.
% which is equivalent to minimize $\|\epsilon(\samplingmat)\|$.
Since the true optimum is not available (as also mentioned in Remark~\ref{remark:1}),
\cite{shin16} developed 
the so-called S-optimality 
\cite{OptimalDesign}
that maximizes 
both the column orthogonality 
of $\samplingmat^\top \bm{Q}$
and the determinant
$(\samplingmat^\top \bm{Q})^\top \samplingmat^\top \bm{Q}$. Maximizing the column orthogonality of $\samplingmat^\top \bm{Q}$ can increase the numerical stability, which has been an issue for E-optimality based sampling methods, such as DEIM.
% Since the procedure of \cite{shin16} is independent of the right hand side $\bm{b}$ (either $\bm{f}$ or $\bm{r}$)
% and the true optimum cannot be attained without having the entire $\bm{b}$,
% it was named the nonadaptive quasi-optimal points selection.
In this paper, we refer to this method as ``S-OPT".
% as the method aims at maximizing the $\mathcal{S}$ quantity \eqref{def:S}.

The goal of this paper is to introduce 
the S-OPT sampling method
to the PROM community 
as a hyper-reduction method,
and compare its performance 
with those by the DEIM family algorithms.

In what follows, 
the following notations are used.
Let $\bm{Q}$ be an orthogonal matrix obtained from a QR factorization of $\bm{M}$
if the columns of $\bm{M}$ are not orthonormal,
and let $\bm{Q}=\bm{M}$ otherwise.
Given a set of indices $\sampset = \{i_1,\dots,i_\ell\}$
and a vector $\basiscol \in \mathbb{R}^{N}$,
the $i$th component of $\basiscol$ is denoted by $\basiscol(i)$.

%{\color{blue} YS: Notation. $p$ represents either $\nbasissource$ or $\nbasisres$.
%$\bm{M}$ represents either $\sourcebasis$ or $\resbasis$.
%The index set $\sampset$
%contains $n$ indices, $\{i_1,\dots,i_n\}$.
%Please revise Algorithm 3.1 
%according to these notations,
%which can handle the two hyper-reduction approaches described earlier. 
%}

\subsubsection{Oversampled DEIM Algorithm} \label{sec:greedy}

% The original DEIM (also Q-DEIM) algorithm \cite{chaturantabut2010nonlinear,drmac2016new}
% is limited in selecting upto $p$ ($\nbasissource$ or $\nbasisres$) indices.
% {\color{blue} YS: I looked up the references but I couldn't find the exact procedure describing Algorithm 1.
% }

The oversampled DEIM algorithm used in this paper differs from the original DEIM algorithm \cite{chaturantabut2010nonlinear}
and more closely follows the sampling method from the LSPG paper (Algorithm 5 in \cite{carlberg2011efficient}). 
Unlike the original DEIM, 
this allows for oversampling, i.e., selecting more samples than the number of POD modes, i.e., $n > p$.
We call the algorithm ``oversampled DEIM" in this paper, 
while it could be fairly named a greedy or gappy-POD algorithm, among other options.
This algorithm has also been modified for space-time LSPG in \cite{choi2019space}.

%{\color{blue} YS[Q]: Suppose $p=5$ and $n=15$. Based on the Algorithm 3.1, 
%three indices are selected every time one basis (column) is added.
%However, if $n=13$ (in general, if $n$ is not an integer multiple of $p$),
%how one selects the remaining indices ?
%Are these decisions making differences in performance?

% JL: How one selects the remaining indices is not a big factor in the performance. 
% The remainder is usually much smaller than the number of desired samples.
% As written, Alg 3.1 would have selected a few too many, so I've added a break signal.
% In [15] Youngsoo uses floor() instead of ceil() and adds +1 to n_iter every so often.
% But I do not think the difference matters
%}

%{\color{blue} YS: Can you double check the reference [6] where the oversampled DEIM is presented?
%I looked up [6] but couldn't find the exact procedure of Algorithm 3.1. 
%}

By closely following notation of \cite{washabaugh},
we present 
the pseudo-code for the oversampled DEIM in Algorithm \ref{alg:deim}. 
% The notation $\texttt{argmax}$ is that of the function $\texttt{numpy.argmax}$ from NumPy, which returns the index of the maximum value.
Starting with an appropriate basis $\bm{Q}=[\basiscol_1,\cdots,\basiscol_p]$
as an input, as well as the desired number of samples $n$, the algorithm selects indices based on a greedy method.
The first selection is simply the index of the largest absolute value entry of $\basiscol_1$.
The algorithm then loops over the rest of the columns of $\bm{Q}$, retaining all prior columns within
% $\phicurrent$.
$\bm{Q}_{1:j} =[\basiscol_1,\cdots,\basiscol_j]$.
Each column of $\bm{Q}$ is approximated using a gappy POD reconstruction (line 8), and the index at which there is the largest error is selected and included in $\sampset$. 
This loop continues until the sampled set contains the desired number of unique indices.
If necessary, the set $\sampneighs$ of total indices (sampled and required neighboring nodes) is also populated according to the numerical scheme.
\begin{algorithm}
\setstretch{1.35}
\caption{Oversampled DEIM selection.} \label{alg:deim}
\textbf{Input:} Desired number of sampled indices $n$ where $p \leq n < \fullstaten$, 
  and an orthonormal basis $\bm{Q} = [\basiscol_1, \cdots, \basiscol_p]$ from $\bm{M}$.\\
\textbf{Output:} A set of $n$ indices $\sampset =  \{i_1, \dots, i_n \}$
\begin{algorithmic} [1]
 \State $\sampset = \{ i_1 \}$ where $i_1 = \texttt{argmax}_i |\basiscol_1(i)|$
 \State $n_{\text{iter}} =$ ceil$(\frac{n-1}{p-1})$ \{Determine the number of indices to choose for each iteration\}
 \For{ $j = 2: p$} 
   \State 
   $\bm{Q}_{1:j-1} = [\basiscol_1, \cdots, \basiscol_{j-1}]$ 
   \For{ $k = 1 : n_{\text{iter}}$}
     %\State Find $\widehat{\basiscol}_{j}$, $\widehat{\phicurrent}$ by restricting to $\sampset$
     \State
     $\ell = 
     (j-2)n_{\text{iter}} + k$
     \State 
     Construct 
     $\samplingmat = [\boldsymbol{e}_{i_1}, \cdots, \boldsymbol{e}_{i_\ell}]$
     from $\sampset$
    %  where $\boldsymbol{e}_i$ denotes the $i$th canonical unit vector
     \State Compute $\bm{\gappyerr} = \bm{Q}_{1:j-1} (\samplingmat^\top \bm{Q}_{1:j-1})^{\dagger} \samplingmat^\top \basiscol_{j}$ 
     \State $i_{\ell+1} = \texttt{argmax}_{i} \left| \basiscol_{j}(i) - \bm{\gappyerr}(i) \right|$ 
     \State $\sampset \leftarrow \sampset \cup \{  i_{\ell+1} \}$
     \IIf {$|\sampset| = n$} \Return 
    \EndFor
    \EndFor
   
\end{algorithmic}
\end{algorithm}

\subsubsection{S-OPT: A Quasi-Optimal Points Selection Algorithm} \label{sec:S-OPT}

This subsection introduces the sampling method (S-OPT) proposed in \cite{shin16}.
The underlying principle of S-OPT is to find 
a sampling matrix $\samplingmat$ that
makes 
$\bm{\epsilon}(\samplingmat)$ as small as possible. According to Theorem~\ref{thm:error-estimate},
$\bm{\epsilon}(\samplingmat)$ should 
satisfy 
\begin{equation} \label{moti:S-opt}
    ((\samplingmat^\top \bm{Q})\samplingmat^\top \bm{Q}) \bm{\epsilon} = (\samplingmat^\top \bm{Q})^\top \samplingmat^\top \text{proj}_{\bm{M}^\perp} \bm{b}.
\end{equation}
% Two key observations
% are made in \cite{shin16}.
% One is that 
If the sampling matrix $\samplingmat$ is constructed to make $\samplingmat^\top \bm{Q}$ preserve the same column orthogonality as $\bm{Q}$, the right-hand side of \eqref{moti:S-opt}
becomes zero, 
leading to $\bm{\epsilon}(\samplingmat) = 0$.
However, this will only be the case 
when $\samplingmat$ is the identity matrix
with $n = N$
and in general, $\bm{\epsilon}(\samplingmat)$ will be nonzero.
With the goal of having a small $\bm{\epsilon}(\samplingmat)$,
the S-OPT \cite{shin16} 
seeks to 
(a) maximize the column orthogonality of $\samplingmat^\top \bm{Q}$
so that the right-hand side of \eqref{moti:S-opt}
is minimized,
and 
(b) maximize the determinant of 
$(\samplingmat^\top \bm{Q})^\top \samplingmat^\top \bm{Q}$
so that the nonzero solution of \eqref{moti:S-opt} is small.
% The method aims at finding the sampling matrix $\samplingmat$ that maximizes a quantity 
% To achieve these two objectives, 
A quantity denoted by $\mathcal{S}$ (the precise definition is given in \eqref{def:S})
that measures the mutual column orthogonality and the determinant was developed in \cite{shin16}.
The S-OPT aims at finding a sampling matrix $\samplingmat$ that maximizes the quantity $\mathcal{S}(\samplingmat^\top \bm{Q})$.

Since the true optimality 
$\samplingmat^*$ of \eqref{true-OPT2}
is not achievable in practice, 
% preventing it not a viable option for practical use. 
the selection method based on 
the S-optimality \eqref{def:S} was termed ``quasi-optimal" in \cite{shin16}
and ``near-optimal/S-optimality" in \cite{shin2016near}.
Here, we simply refer to it as ``S-OPT."

% The quantity is defined in \eqref{def:S}
% and denoted by $\mathcal{S}$.

% Since $\epsilon(\samplingmat)$ should satisfy \eqref{moti:S-opt},
% \cite{shin16} proposed to maximize a quantity $\mathcal{S}(\samplingmat^\top Q)$ (to be introduced in Section 3.1.2), 
% that measures the column orthogonality of $\samplingmat^\top Q$
% and the determinant of $(\samplingmat^\top Q)^\top \samplingmat^\top Q$,
% hoping to have a small $\epsilon(\samplingmat)$.

Let $\bm{A}$ be an $N\times p$ matrix
and $\bm{\alpha}_i$ be its $i$th column.
Assuming $\|\bm{\alpha}_i\|\ne 0$ for all $i=1,\dots,p$,
$\mathcal{S}(\bm{A})$ is defined to be 
\begin{equation} \label{def:S}
    \mathcal{S}(\bm{A}) := \left( \frac{\sqrt{\det \bm{A}^\top \bm{A}}}{\prod_{i=1}^p \|\bm{\alpha}_i\|} \right)^{\frac{1}{p}} \in [0,1].
\end{equation}
It was shown in \cite{shin16} that
$\mathcal{S}(\bm{A}) = 1$ if and only if 
the columns of $\bm{A}$ are mutually orthonormal.
Hence, 
maximizing $\mathcal{S}$
enforces both
mutual column orthogonality 
and a larger determinant.

In the context of hyper-reduction, 
% the matrix $\bm{M} \in \mathbb{R}^{N \times p}$ is a reduced basis matrix (either $\sourcebasis$, $p=\nbasissource$ or $\resbasis$, $p=\nbasisres$). 
the S-OPT seeks the optimal sampling matrix $\samplingmat_{\text{S-OPT}}$ that
maximizes $\mathcal{S}$, i.e.,
\begin{equation*}
    \samplingmat_{\text{S-OPT}} = \argmax_{ \samplingmat }  \mathcal{S}(\samplingmat^\top \bm{Q}).
\end{equation*}
% where 
% $Q$ is obtained from a QR factorization of $\bm{M}$
% if the columns of $\bm{M}$ are not orthonormal,
% and $Q=\bm{M}$ otherwise.
Solving the above optimization problem
requires one to compute multiple evaluations of 
$\mathcal{S}$.
The evaluation of $\mathcal{S}$, however, can be expensive 
as it requires
the computation of determinants.
However, \cite{shin16} presented 
an efficient way of evaluating $\mathcal{S}$
without computing determinants, 
based on a greedy algorithm 
and the Sherman–Morrison formula.
The pseudo-code for the S-OPT index selection is presented in Algorithm~\ref{alg:s-opt}
for the reader's convenience.
We refer to the original paper \cite{shin16} for full algorithmic details.

\begin{algorithm}
\setstretch{1.35}
\caption{S-OPT selection algorithm \cite{shin16}.} \label{alg:s-opt}
\textbf{Input:} Desired number of sampled indices $n \in \{p,\dots,N\}$, 
  and an orthonormal basis $\bm{Q} = [\basiscol_1, \cdots, \basiscol_p]$ from $\bm{M}$. \\
\textbf{Output:} A set of $n$ indices $\sampset =  \{i_1, \dots, i_n \}$
\begin{algorithmic} [1]
 \State $\sampset = \{ i_1 \}$ where $i_1 = \texttt{argmax}_i |\basiscol_1(i)|$
 \For{ $j = 2 : n$} 
   \State 
   $\bm{Q}_{1:k} = [\basiscol_1, \cdots, \basiscol_{k}]$ 
   where $k = \min\{j,p\}$
   \State $i_j = \texttt{argmax}_{\ell \in [N]\backslash \sampset} \mathcal{S}(\samplingmat_\ell^\top \bm{Q}_{1:k})$
   where $\samplingmat_\ell$
%   $Z_\ell = [\boldsymbol{e}_{i_1}, \cdots, \boldsymbol{e}_{i_{j-1}}, 
%   \boldsymbol{e}_{\ell}
%   ]$
   is constructed from 
   $\sampset \cup \{\ell\}$
   \State $\sampset \leftarrow \sampset \cup \{i_j\}$
 \EndFor
\end{algorithmic}
\end{algorithm}

% -----------------------
% -       Results       -
% -----------------------
\section{Main results}
\label{sec:main}

In this section we show four example problems using both sampling algorithms. They are a 1D Burgers equation problem, an aerodynamic laminar flow around airfoil, and two Lagrangian hydrodynamic problems.

The 1D Burgers equation compares the use of the sampling methods for reduced order models built with linear or nonlinear subspaces; that is, LS-ROM and NM-ROM. Meanwhile, the laminar airfoil problem builds a parametric LS-ROM using shape parameters. The airfoil problem also explores how truncating the POD basis may affect the results when using either sampling algorithm. The final two problems are the 2D Gresho vortex and the 3D Sedov blast Lagrangian hydrodynamics problems. In these examples, we show that the sampling methods result in different error bounds, and we further examine the performance of the two sampling algorithms for varying the number of sampled indices. 

Furthermore, the examples in this paper use different combinations of projection and hyper-reduction methods.
Before presenting the problem descriptions and results, we present the specific methods used for each of the four examples here, for comparison and completeness purposes.

While the examples in Sections~\ref{sec:burg} and~\ref{sec:airfoil} both apply LSPG projection, 
the example in Section~\ref{sec:burg} applies gappy POD hyper-reduction
(or the GNAT procedure), and the airfoil example in Section~\ref{sec:airfoil} applies collocation hyper-reduction for the residual term.
Finally, the two examples in Section~\ref{sec:laghydro} use the DEIM approach; i.e., a
Galerkin projection with a gappy POD approximation of the nonlinear term. 

Applying LSPG with \textit{gappy POD} hyper-reduction, as done in Section~\ref{sec:burg}, gives
  \begin{equation} \label{eq:lspggappyhr}
  \gencoord^{n} = \argmin_{\boldsymbol{v} \in \RR^\nbasisspace} \left\| 
  (\samplingmat^\top \resbasis)^{\dagger} \samplingmat^\top
  \res^n_{\text{BE}}(\sol_{\text{ref}} + \basis \boldsymbol{v}; \sol_{\text{ref}} + \basis \gencoord^{n-1}, \param) \right\|^2, 
 \end{equation}
 which aims at minimizing the approximation \eqref{eq:res_approx} of $\res^n_{\text{BE}}$. 

 Applying LSPG with \textit{collocation} hyper-reduction, as done in Section~\ref{sec:airfoil}, gives
   \begin{equation} \label{eq:lspgcolhr}
  \gencoord^{n} = \argmin_{\boldsymbol{v} \in \RR^\nbasisspace} \left\| 
  \samplingmat^\top 
  \res^n_{\text{BE}}(\sol_{\text{ref}} + \basis \boldsymbol{v}; \sol_{\text{ref}} + \basis \gencoord^{n-1}, \param) \right\|^2.   
 \end{equation}
 In Equations~\eqref{eq:lspggappyhr} and \eqref{eq:lspgcolhr}, the fully discretized residual 
 $\res^n_{\text{BE}}$ is used as an example, and a residual defined using a different discretization can be used in a similar fashion.
 
 And lastly, applying Galerkin projection with gappy POD hyper-reduction of the nonlinear term, 
 as done in Section~\ref{sec:laghydro}, gives
  \begin{equation} \label{eq:galerkinhr}
  \begin{split}
      \dot{\gencoord}  
      &= (\projmatrix^\top \massmatrix \basis)^{-1} \projmatrix^\top \approxf
      \\
      &= (\projmatrix^\top \massmatrix \basis)^{-1} \projmatrix^\top 
      \left[ \sourcebasis (\samplingmat^\top \sourcebasis)^{\dagger} 
  \samplingmat^\top \ffunction(\sol_{\text{ref}} + \gfunction(\gencoord),t, \param)\right]. 
  \end{split}
 \end{equation} 
In the case of Galerkin projection, remember that the projection matrix $\projmatrix$ is taken to be the solution basis $\basis$. 
The front matter $(\projmatrix^\top \massmatrix \basis)^{-1}\projmatrix^\top \sourcebasis (\samplingmat^\top \sourcebasis)^{\dagger}$
can be pre-computed once, and the nonlinear term $\samplingmat^\top \ffunction$ can be evaluated at only the selected indices.
Equations~(\ref{eq:lspggappyhr}, \ref{eq:lspgcolhr}, \ref{eq:galerkinhr}) no longer scale with the full dimension $\fullstaten$, 
but rather with the number of selected indices $n$ and the reduced dimension $\nbasisspace$.

Finally, it is important to note that computing the residual at the selected nodes may depend on the solution at neighboring nodes. 
For example, for a finite volume discretization involving inviscid fluid dynamics (Euler) equations (e.g., Section~\ref{sec:airfoil}) as well as the finite element method (e.g., Section~\ref{sec:laghydro}), the solution at the 
neighboring nodes is required to compute the flux, and then the residual at the selected node can be computed using the flux.
For this reason it is necessary to maintain the indices of all required neighboring nodes, in addition to the selected nodes.

\subsection{1D Burgers Equation}\label{sec:burg}
As the first example, we consider the 1D inviscid Burgers equation 
\begin{equation}\label{eq:burgers}
  \begin{aligned}
    \frac{\partial u}{\partial\timeSymbol} + u \frac{\partial u}{\partial x} & = 0, 
    \quad (t,x) \in [0,0.5]\times[0,2],
  \end{aligned}
\end{equation}
with the periodic boundary condition $u(\timeSymbol, 0) = u(\timeSymbol, 2)$ for $t \in [0,0.5]$
and the initial condition $u(0,x) = 1 + \frac{1}{2}(\sin(2\pi x - \frac{\pi}{2})+1)$. 
We decompose the spatial domain into a uniform mesh with mesh size $\Delta x = 0.002$, 
which consists of $1001$ grid points $x_i = i \Delta x, i \in \{0,1,2,\ldots,1000\}$, 
at which the discrete solution is defined by $u_i(t) = u(t,x_i)$. 
The semi-discretization of the Burgers equation is given by: 
\begin{equation}\label{eq:burgers-fom}
  \begin{aligned}
    \frac{d\sol}{\partial\timeSymbol} = -\dfrac{1}{\Delta x}(\boldsymbol{D}\sol \odot \sol),
  \end{aligned}
\end{equation}
where $\sol = (u_1, u_2, \ldots, u_{1000})^\top \in \RR^{1000}$ 
is the coefficient vector, $\odot$ denotes the entry-wise product, and 
\begin{equation}
    \boldsymbol{D} = \left[\begin{array}{ccccc}
    1 & & & & -1 \\
    -1 & 1 & & & \\
     & -1 & 1 & & \\
     & & \ddots & \ddots & \\
     & & & -1 & 1
    \end{array}\right].
\end{equation}
To obtain a fully discrete scheme, 
we similarly decompose the temporal domain into $\ntimestep = 500$ subintervals
to form a uniform mesh with mesh size $\Delta t = 0.001$, 
and use a backward difference to numerically approximate the temporal derivative. 
Figure~\ref{fig:burgers_sol} shows the initial condition and the final-time solution 
for 1D Burgers equation. It can be seen that a shock is eventually developed at $x = 1.5$. 
\begin{figure}[htb!]
    \centering
    \includegraphics[width=0.48\linewidth]{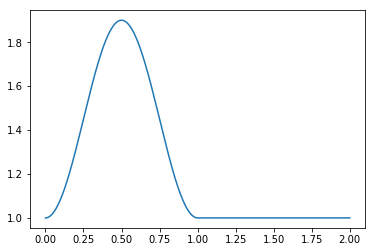}
    \hfill
    \includegraphics[width=0.48\linewidth]{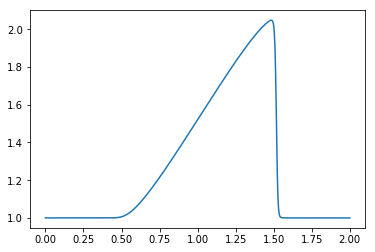}
    \caption{Initial condition (left) and final-time solution (right) for 1D Burgers equation.}
    \label{fig:burgers_sol}
\end{figure}
In \cite{kim2022fast}, a linear-subspace reduced order model (LS-ROM) and 
a nonlinear-manifold reduced order model (NM-ROM) are developed for \eqref{eq:burgers-fom}, where DEIM is used 
for hyper-reduction in both approaches. 
In what follows, we compare the numerical results of reduced order models 
using the sampling algorithms in Section~\ref{sec:salgs}, 
i.e. the oversampled DEIM and S-OPT. 
The maximum-in-time $L^2$ relative error for the ROM solution $\widetilde{\sol}$ is measured
against the corresponding FOM solution $\sol$,
which is defined as:
\begin{align}\label{eq:relerror-burger}
  \relError_{u,\max} &= \frac{ \max\limits_{1 \leq n \leq 500} \| \sol^n -
  \widetilde{\sol}^{n} \|_{L^2} }{ \max\limits_{1 \leq n \leq 500} \| \sol^n \|_{L^2} }.
\end{align}
All the 1D Burgers equation simulations in this
subsection use Lassen in Livermore Computing Center\footnote{High
performance computing at LLNL, https://hpc.llnl.gov/hardware/platforms/lassen},
on Intel Power9 CPUs with 256 GB memory and NVIDIA V100 GPUs, 
peak TFLOPS of 23,047.20, and peak single
CPU memory bandwidth of 170 GB/s.

In Figure~\ref{fig:burgers_error}, we depict the maximum-in-time relative discrete $L^2$ 
error of the solution against the number of sampling indices for both sampling algorithms. 
The number of sampling indices $\nsamp$ takes value within $\nbasissource \leq \nsamp \leq \nbasissource+30$, where $\nbasissource=30$ in LS-ROM and $\nbasissource=47$ in NM-ROM. 
In LS-ROM, despite only a minor difference in the error, the DEIM algorithm
yields more oscillation in the error with respect to the number of sampling indices. 
In NM-ROM, the oscillation is even more severe in DEIM. 
In other words, the S-OPT algorithm yields more stable results 
in the Burgers equation with increasing number of sampling indices. 
Figure~\ref{fig:burgers_lsrom_node} and Figure~\ref{fig:burgers_nmrom_node} 
illustrate the selected nodes by the sampling algorithms in LS-ROM and NM-ROM respectively. 
In both cases, both S-OPT and DEIM select nodes around the expended shock wave, 
where the nonlinearity occurs, in priority. As the oversampling number increases, 
S-OPT tends to sample nodes in a more widespread manner, 
while DEIM still tends to densely select nodes close to the shock, 
as shown in Figure~\ref{fig:burgers_sol}. 
\begin{figure}[htb!]
    \centering
    \includegraphics[height=0.4\linewidth]{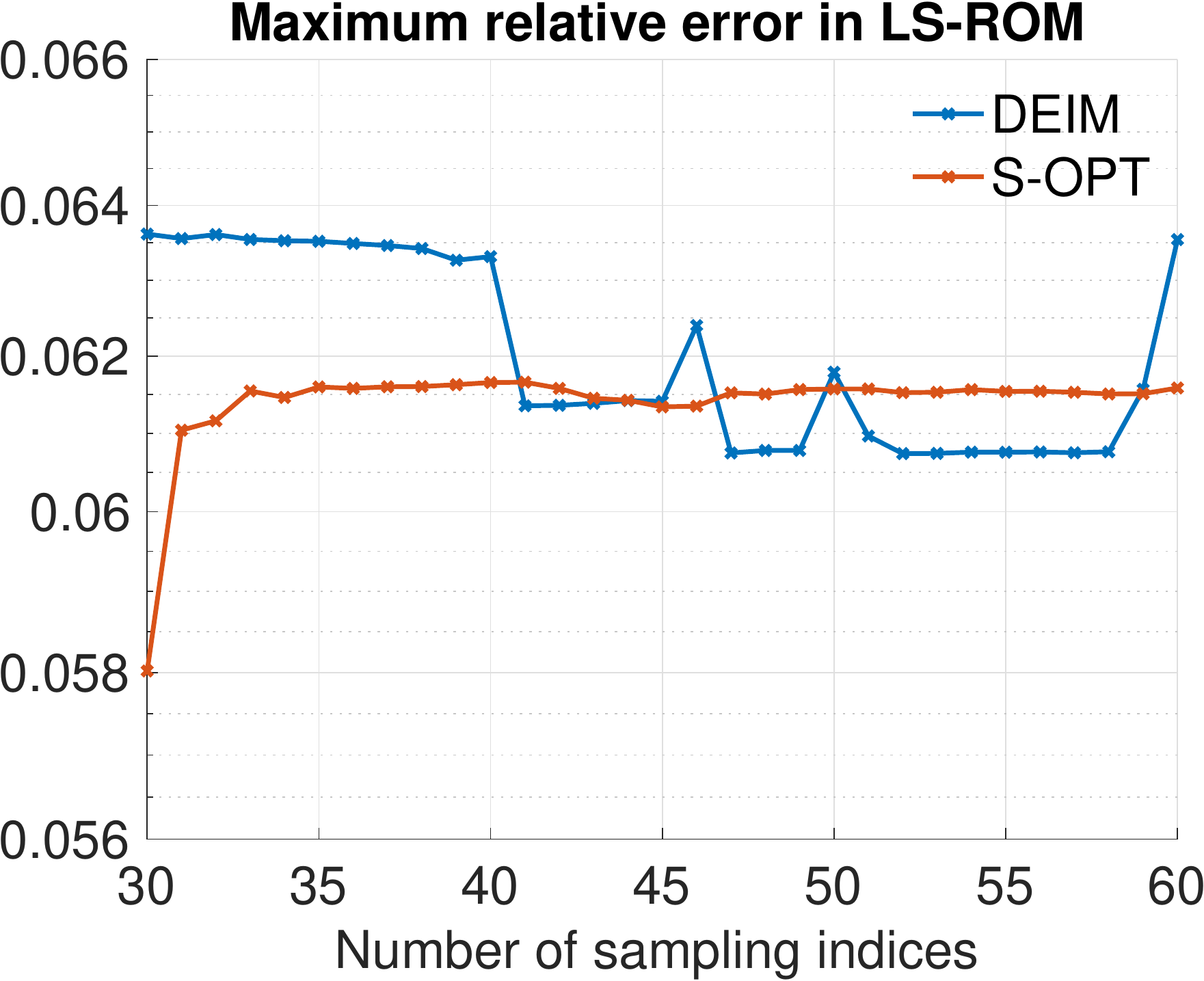}
    \hfill
    \includegraphics[height=0.4\linewidth]{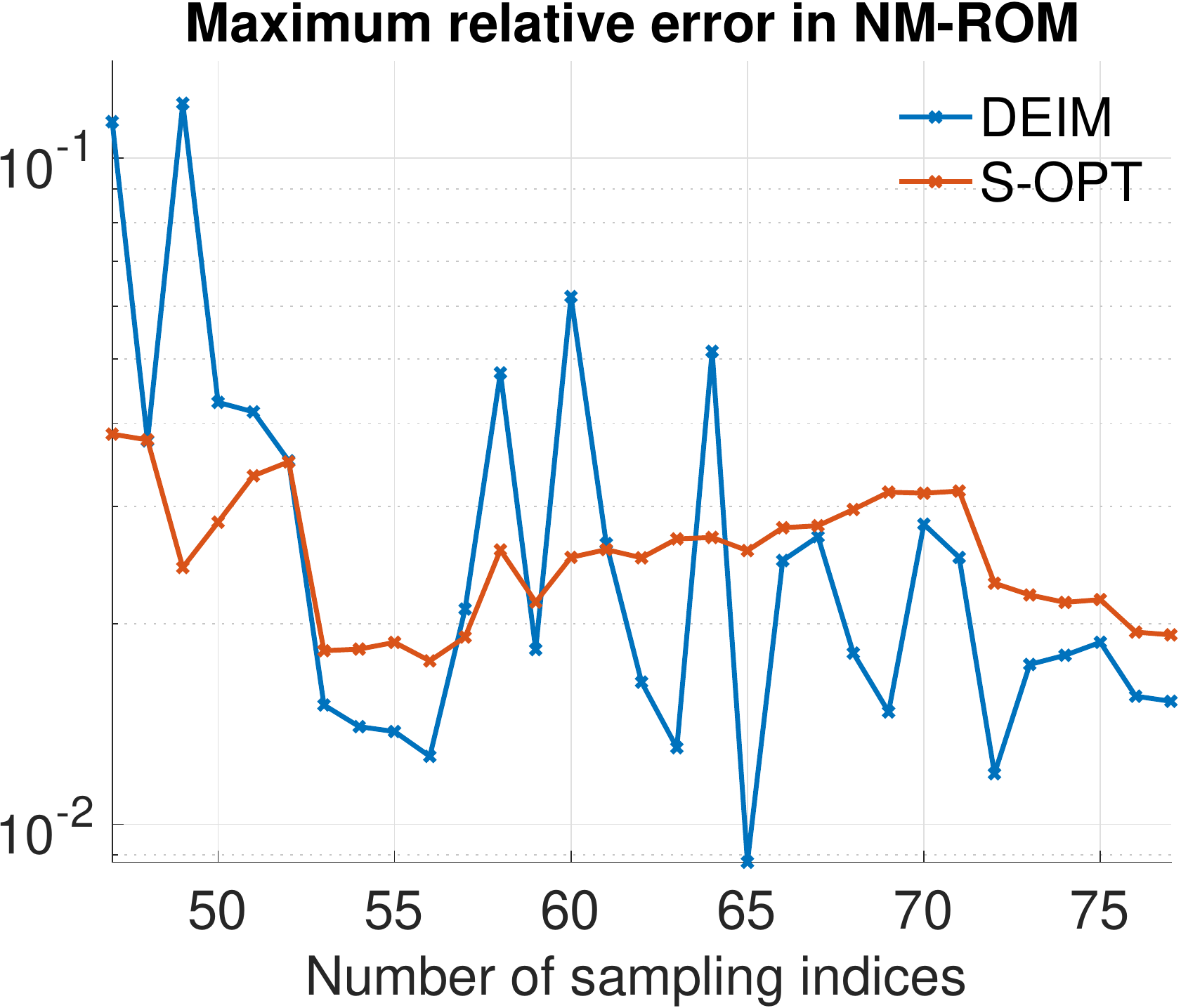}
    \caption{Maximum-in-time relative $L^2$ error with varying number of sampling indices in LS-ROM (left) and NM-ROM (right) for 1D Burgers equation.}
    \label{fig:burgers_error}
\end{figure}
\begin{figure}[htb!]
     \centering
     \begin{subfigure}[b]{0.45\textwidth}
         \centering
         \includegraphics[width=\textwidth]{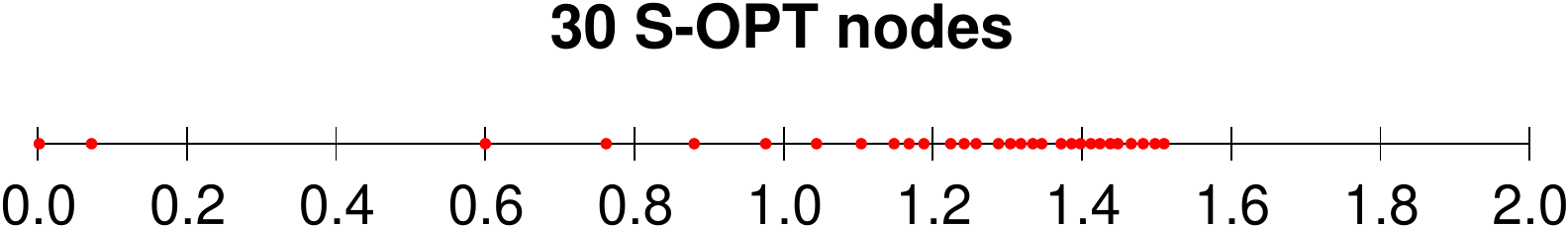}
     \end{subfigure}
     \hfill
     \begin{subfigure}[b]{0.45\textwidth}
         \centering
         \includegraphics[width=\textwidth]{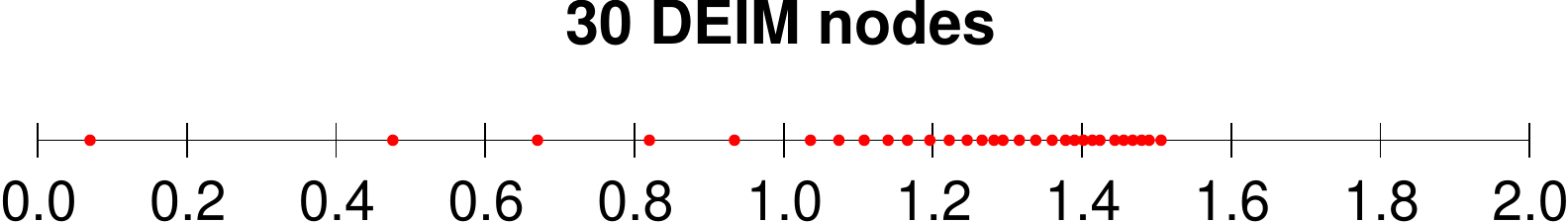}
     \end{subfigure} \\
     \vspace{0.2in}
     \begin{subfigure}[b]{0.45\textwidth}
         \centering
         \includegraphics[width=\textwidth]{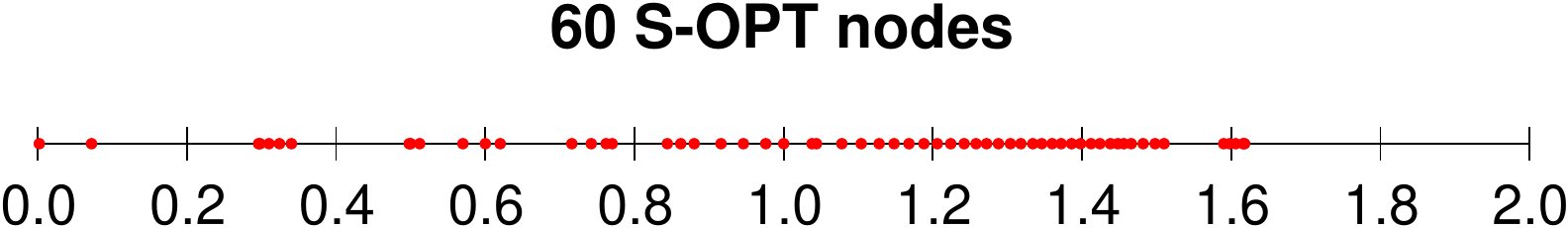}
     \end{subfigure}
     \hfill
     \begin{subfigure}[b]{0.45\textwidth}
         \centering
         \includegraphics[width=\textwidth]{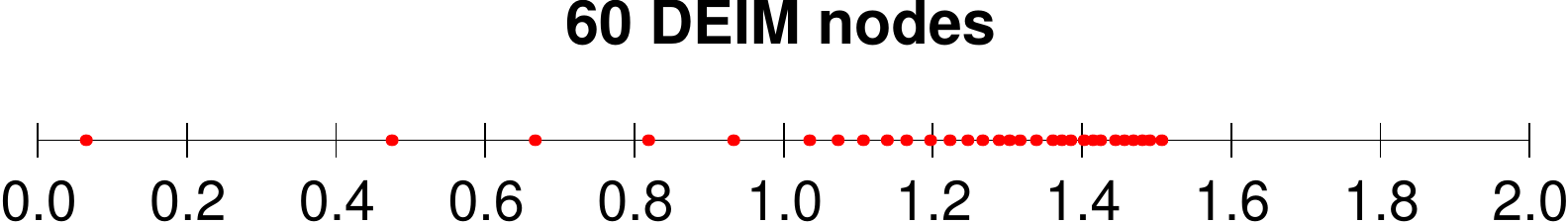}
     \end{subfigure}
        \caption{Selected nodes in LS-ROM for 1D Burgers equation.}
        \label{fig:burgers_lsrom_node}
\end{figure}
\begin{figure}[htb!]
     \centering
     \begin{subfigure}[b]{0.45\textwidth}
         \centering
         \includegraphics[width=\textwidth]{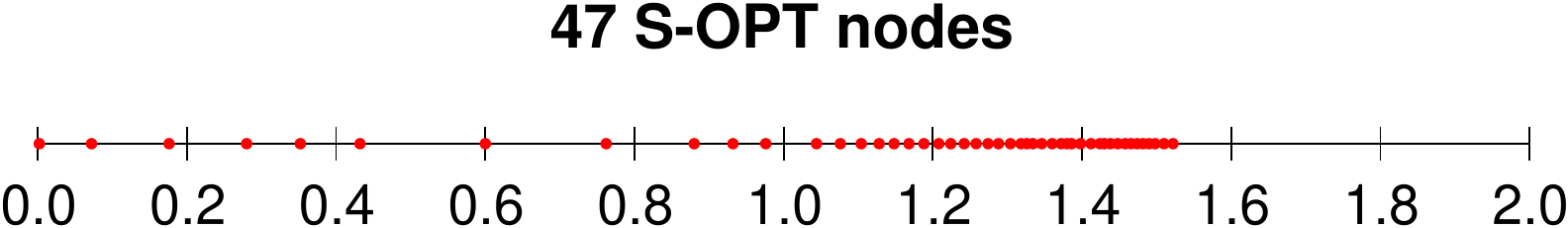}
     \end{subfigure}
     \hfill
     \begin{subfigure}[b]{0.45\textwidth}
         \centering
         \includegraphics[width=\textwidth]{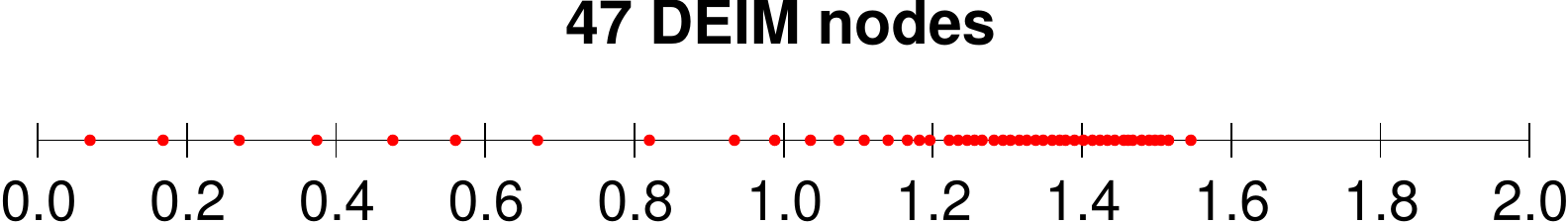}
     \end{subfigure} \\
     \vspace{0.2in}
     \begin{subfigure}[b]{0.45\textwidth}
         \centering
         \includegraphics[width=\textwidth]{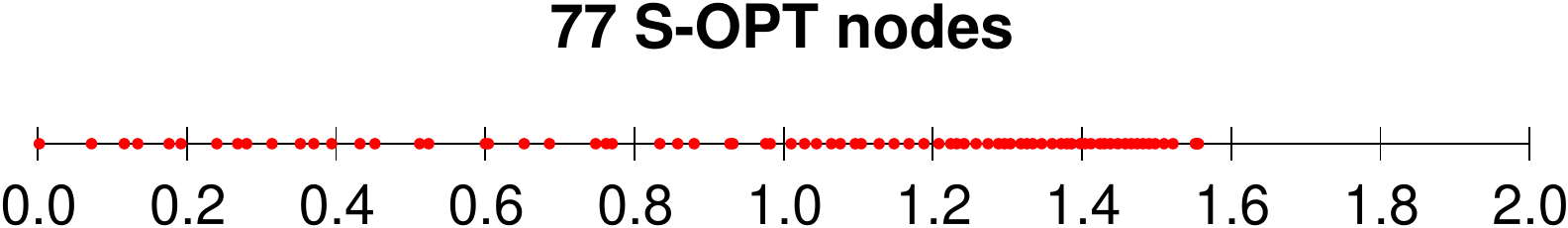}
     \end{subfigure}
     \hfill
     \begin{subfigure}[b]{0.45\textwidth}
         \centering
         \includegraphics[width=\textwidth]{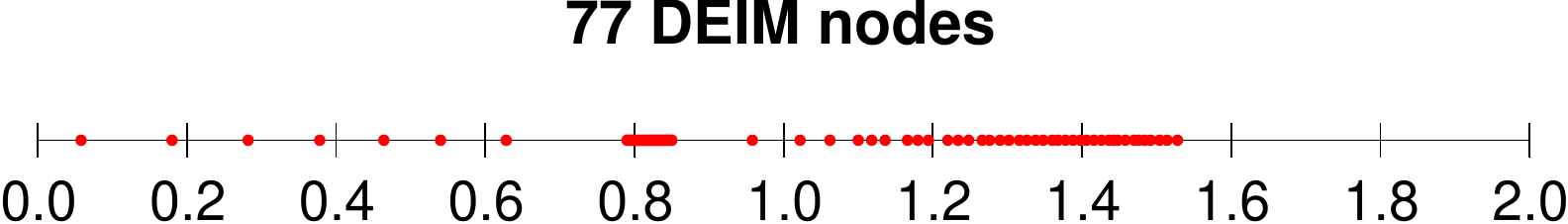}
     \end{subfigure}
        \caption{Selected nodes in NM-ROM for 1D Burgers equation.}
        \label{fig:burgers_nmrom_node}
\end{figure}
\subsection{2D Laminar Airfoil}\label{sec:airfoil}
The next example considers a 2D laminar airfoil using the steady compressible Navier-Stokes equations with a low Reynolds number (i.e., no turbulence). For the steady problem, we define the following residual definition, which is used in the construction of a LS-ROM:
\begin{equation}
    %\frac{\partial \sol}{\partial t} + 
    \res(\sol; \param) = 
    \gradientSymbol \cdot \convflux(\sol; \param) 
    - \gradientSymbol \cdot \viscflux(\sol; \param) = 0
    %\source 
    \quad \quad \text{in } \solDomainSymbol,
\end{equation}
where $\convflux$ is the convective flux, and $\viscflux$ is the viscous flux. 
%and $\source$ is a source term (zero in this example).
For this 2D case, the conservative variables are given by 
$\sol = \{\densitySymbol, \densitySymbol \velocity, \rho \energyvar \}^\top$ 
with the velocity vector defined as $\velocity = \{\velocitySymbol_1, \velocitySymbol_2\}^\top \in \RR^2$, 
$\densitySymbol$ denoting the fluid density, and $\energyvar$ denoting the total energy per unit mass. 
Furthermore, $\convflux$ and $\viscflux$ are defined as:
\begin{equation}
    \small{\convflux} = \left\{ \begin{array}{c}
    \densitySymbol \velocity^\top \\ \densitySymbol \velocity \velocity^\top + \boldsymbol{I} \pressureSymbol \\ 
    \densitySymbol \energyvar \velocity^\top + \pressureSymbol \velocity^\top \end{array} \right\}, ~ ~
    \small{\viscflux} = \left\{ \begin{array}{c}
    . \\ \boldsymbol{\tau} \\ (\boldsymbol{\tau} \cdot \velocity)^\top + \heatflux^\top \end{array} \right\},
\end{equation}
where $\pressureSymbol$ is the pressure field, $\heatflux$ is the heat flux, and $\boldsymbol{\tau}$ is 
the viscous stress tensor defined as:
\begin{equation}
    \boldsymbol{\tau} = \mu (\gradientSymbol \velocity + \gradientSymbol \velocity^\top) 
    - \mu \frac{2}{3} \boldsymbol{I} (\gradientSymbol \cdot \velocity),
\end{equation}
with $\mu$ as viscosity. The airfoil surface is set as an adiabatic wall (zero heat flux) 
and the far field is set to the free stream conditions. 

The FOM is solved using a pseudo-time stepping method, specifically an implicit, local time stepping method, 
to march the solution forward from the initial condition (freestream fluid state) to the steady state solution.
The local time stepping technique allows for quicker convergence
by allowing the time step to vary between elements. This approach is typical for steady aerodynamics. We use converged steady solutions as simulation data to construct the reduced basis.

The full order model domain contains 8741 mesh points with a NACA0012 airfoil surface in the center. 
We use shape parameters for a parameterized ROM study. In this case, 
the airfoil shape is slightly altered using three Hicks-Henne bumps \cite{hhbumps} 
on the upper surface and three on the bottom surface. Hicks-Henne bumps are smooth bump formulations, 
and each shape parameter defines the amplitude of the Hicks-Henne bump.
The six-dimensional parameter space is sampled using 73 Latin hypercube samples, resulting in 73 total snapshots.
Both the FOM and ROM equations are solved using the open-source CFD code SU2 \cite{su2},
aided by the libROM code \cite{librom} from Lawrence Livermore National Lab for the snapshot collection and POD computation.

The laminar airfoil problem is a good academic problem for quick testing, because it does not 
require a large amount of computational resources. The 73 snapshots are collected on the Sherlock cluster
operated by the Stanford Research Computing Center. Then the reduced order model simulations are run serially 
on a macOS laptop with 2.4 GHz Intel Core i9 processor and 16 GB memory.

The POD basis has a dimension $\nbasisspace = 73$, but is then truncated to 20, 10, and 5 POD modes.
Figure~\ref{fig:airmodes} shows the first four modes, or singular vectors, for the density field from the POD computation. 
The variations of the flow field due to the airfoil shape differences are captured by the POD modes, so
the flow solution for any set of shape parameters can be approximated using a combination of these modes.
\begin{figure} [htb!]
    \centering
    \includegraphics[width=\linewidth]{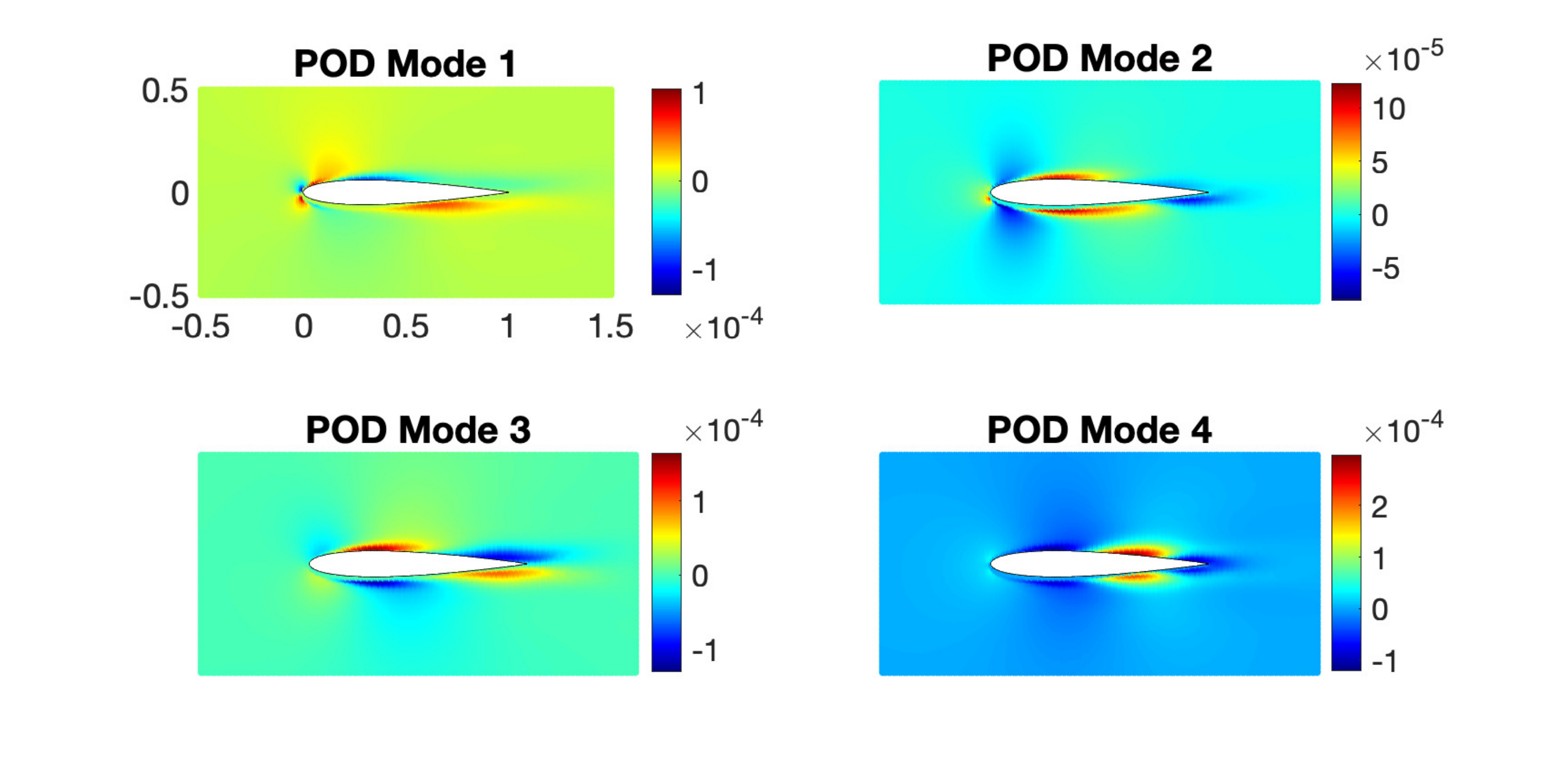}
    \caption{First four POD modes for the density variable for the laminar airfoil.}
    \label{fig:airmodes}
\end{figure}
Both sampling methods are tested and the results are shown in Figure~\ref{fig:air}. 
In all cases, the goal of the ROM is to predict the baseline NACA0012 airfoil shape, 
which is not included in the 73 snapshots.

It is also important to note that for finite-volume CFD models there are multiple equations 
at each node, so the input basis for the sampling algorithms needs to be condensed so that the size of the first 
dimension and the number of nodes are the same. We do this by taking the norm of the 
basis values at each node, resulting in one basis value per node. 
This is in contrast to Section~\ref{sec:laghydro}, where we also show a
different scenario of building a separate basis for a different field, and therefore, merging
the sampling indices from different fields. 

The measurements of error shown in Figure~\ref{fig:air} are defined as follows for the 
$L_2$ error and the maximum relative error:
\begin{align}\label{eq:errors-airfoil}
  \relError_{\sol, L_2} &= \frac{\| \sol - \sol_{\text{true}} \|_{L^2} }
                                {\| \sol_{\text{true}} \|_{L^2}},& 
  \relError_{\densitySymbol, \text{max}} &= \max\limits_{1 \leq i \leq \fullstaten} \left(
                                  \frac{|\densitySymbol_i - \densitySymbol_{i,\text{true}}|} 
                                       {|\densitySymbol_{i,\text{true}}|} \right) \times 100,
\end{align}
where the truth values are obtained from the full order model solution 
of the prediction case (the NACA0012 baseline shape) using SU2.

\begin{figure}[htb!]
     \centering
     \begin{subfigure}[b]{0.48\linewidth}
         \centering
         \includegraphics[width=\linewidth]{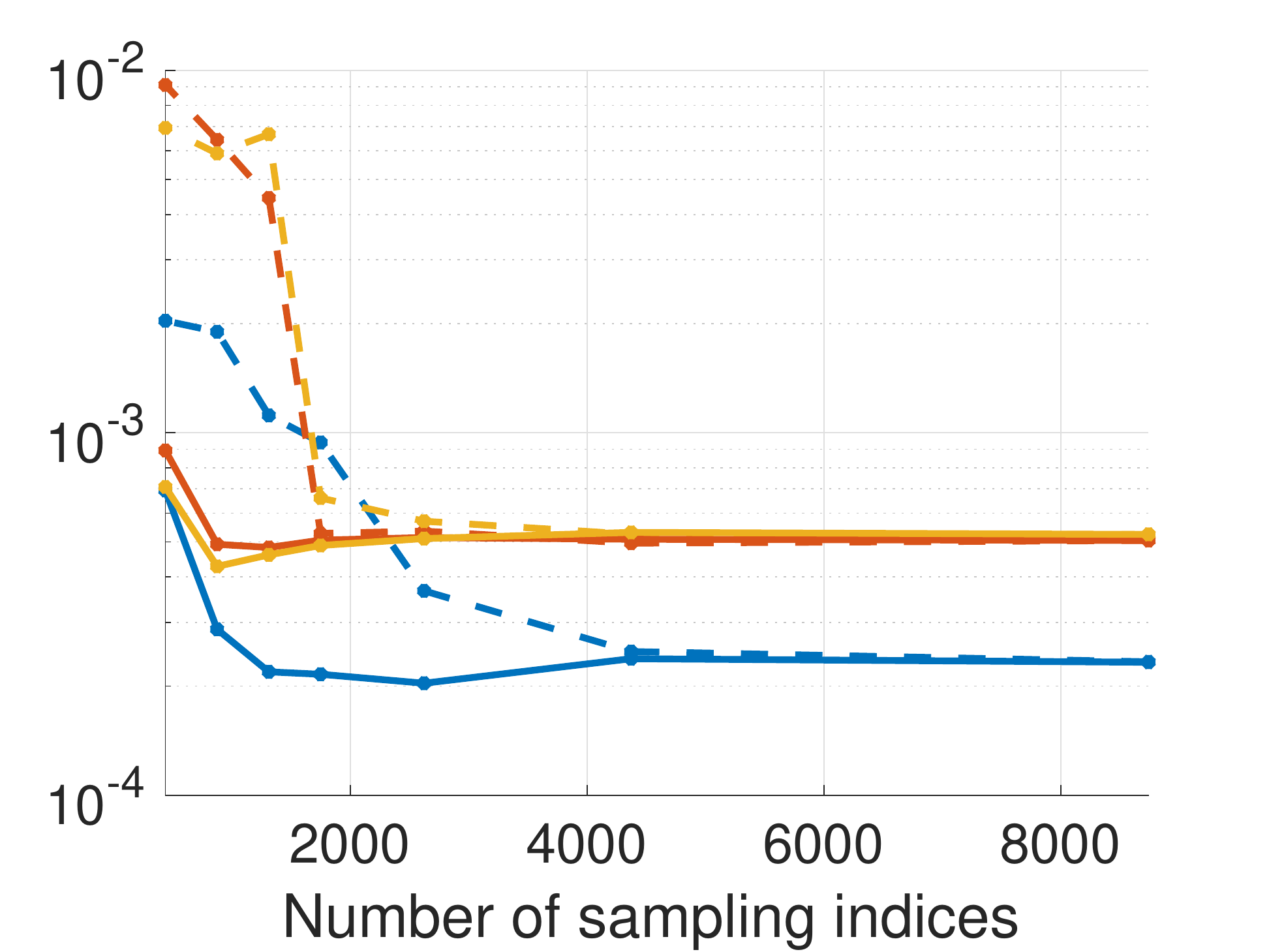}
         \caption{$L_2$ error of ROM prediction vs. true solution.}
         %\label{fig:air1}
     \end{subfigure}
     \hfill
     \begin{subfigure}[b]{0.48\linewidth}
         \centering
         \includegraphics[width=\linewidth]{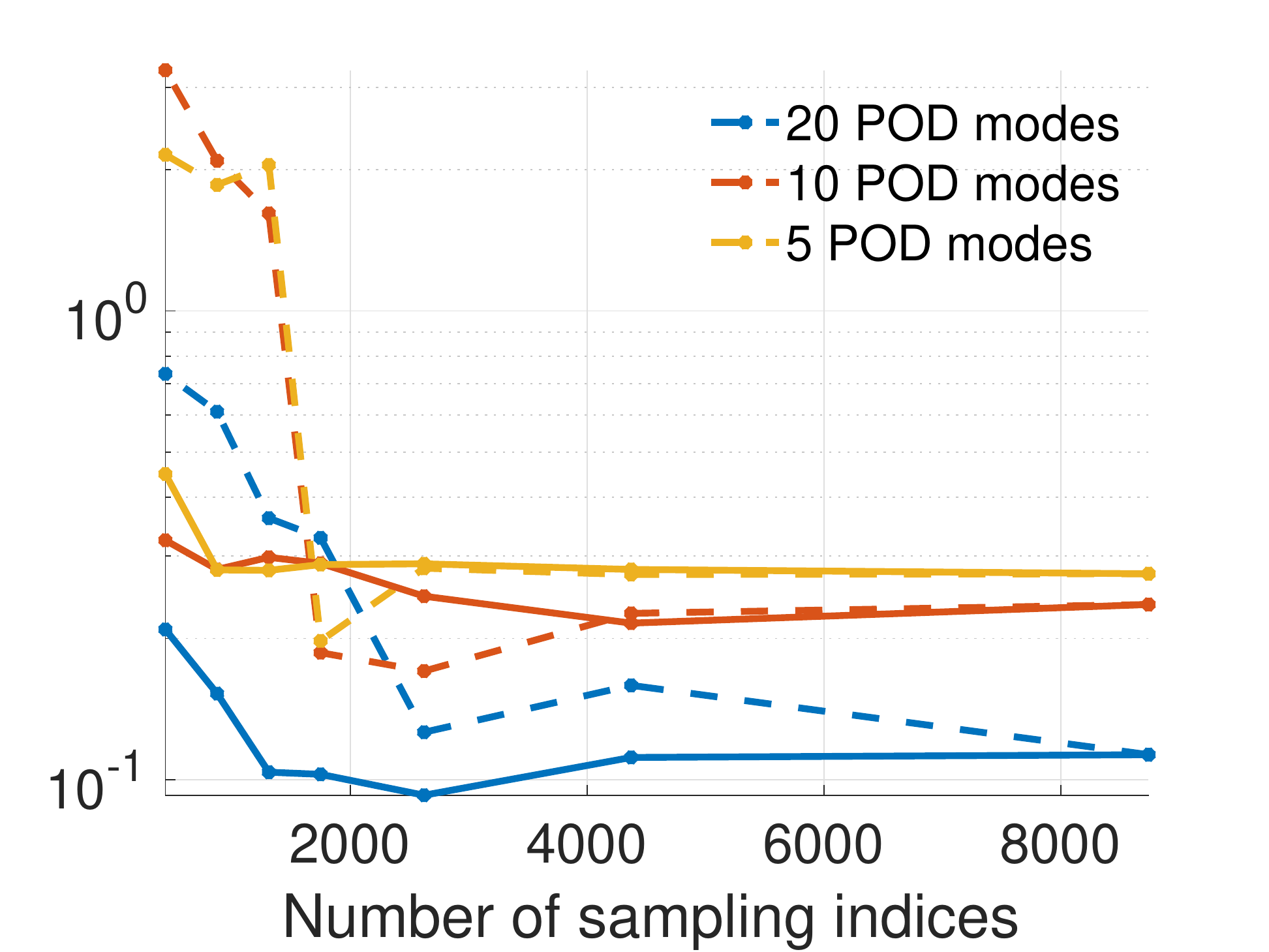}
         \caption{Maximum relative error percent for density.}
         %\label{fig:air2}
     \end{subfigure}
     \hfill
     \begin{subfigure}[b]{0.48\linewidth}
         \centering
         \includegraphics[width=\linewidth]{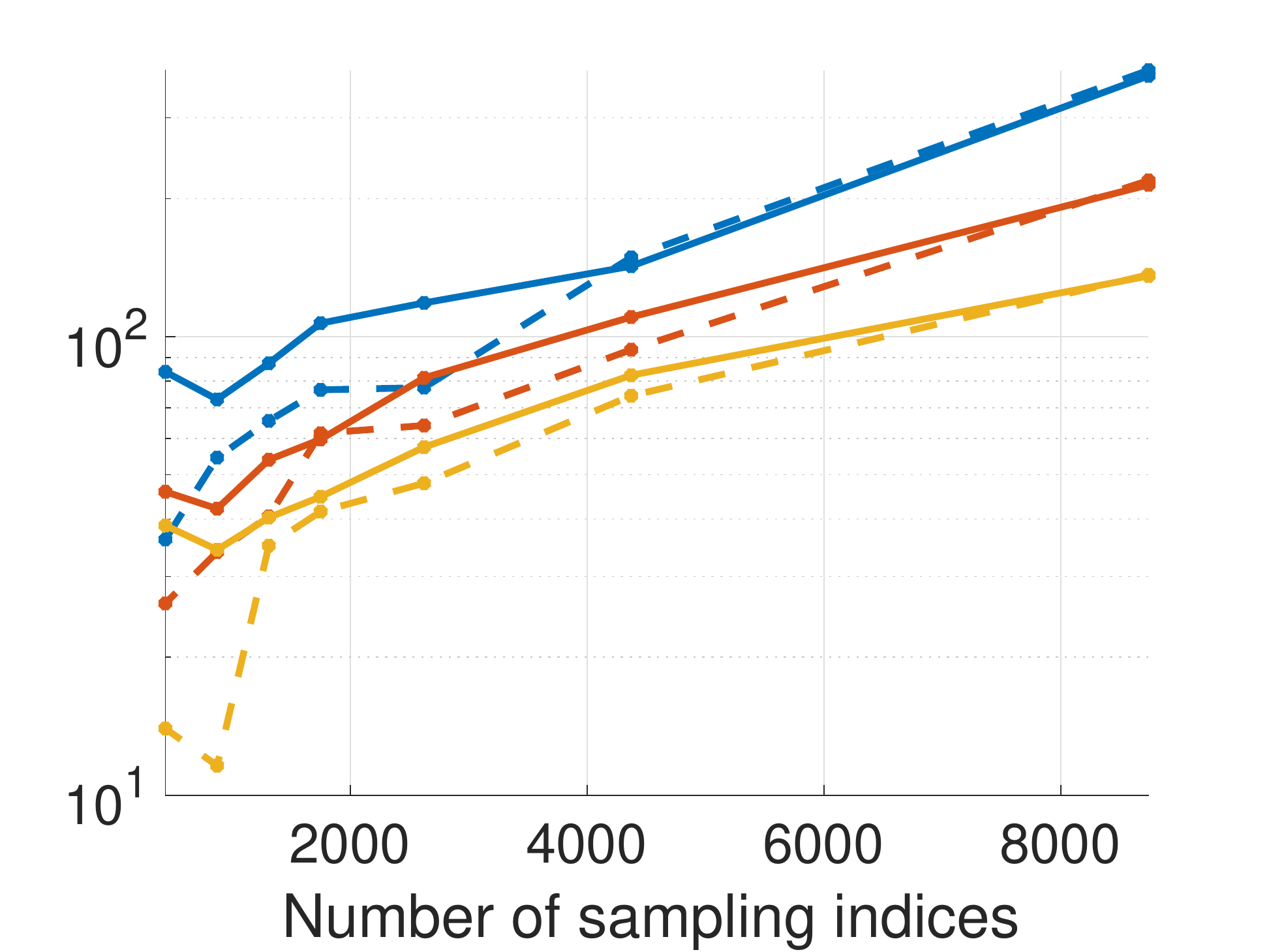}
         \caption{Simulation wall clock time in seconds. }
         %\label{fig:air3}
     \end{subfigure}
        \caption{Comparison of S-OPT (solid line) versus oversampled DEIM (dashed) algorithms. FOM size is 8741. }
        \label{fig:air}
\end{figure}
In Figure~\ref{fig:air} we see that as the number of sampled indices decreases, 
the S-OPT algorithm outperforms the DEIM algorithm as shown by the error measures. 
At the most extreme level of hyper-reduction, using only 437 samples, the ROM 
built with S-OPT algorithm and 5 POD modes performs better than the ROM built with DEIM and 20 POD modes.

However, there is a cost associated with the performance improvement. 
The ROMs built with the S-OPT algorithm generally take longer to converge. 
Figure~\ref{fig:mask} shows one reason the S-OPT algorithm ROMs take longer. 
Since this is a viscous problem, two levels of node-neighbors are required to compute 
the residual at each selected node. The S-OPT algorithm tends to choose nodes that are 
more spread out in the domain, while the DEIM algorithm chooses nodes near the 
airfoil surface where most of the changes from solution to solution take place. 
Since the set of neighboring nodes is greater for the S-OPT ROM, 
the total dimension for the hyper-reduced ROM is larger and each iteration takes longer. 
It is important to note that this dimension still scales with the reduced dimension, $\nbasisspace$.

\begin{figure}[htb!]
     \centering
     \begin{subfigure}[b]{0.45\textwidth}
         \centering
         \includegraphics[width=\textwidth]{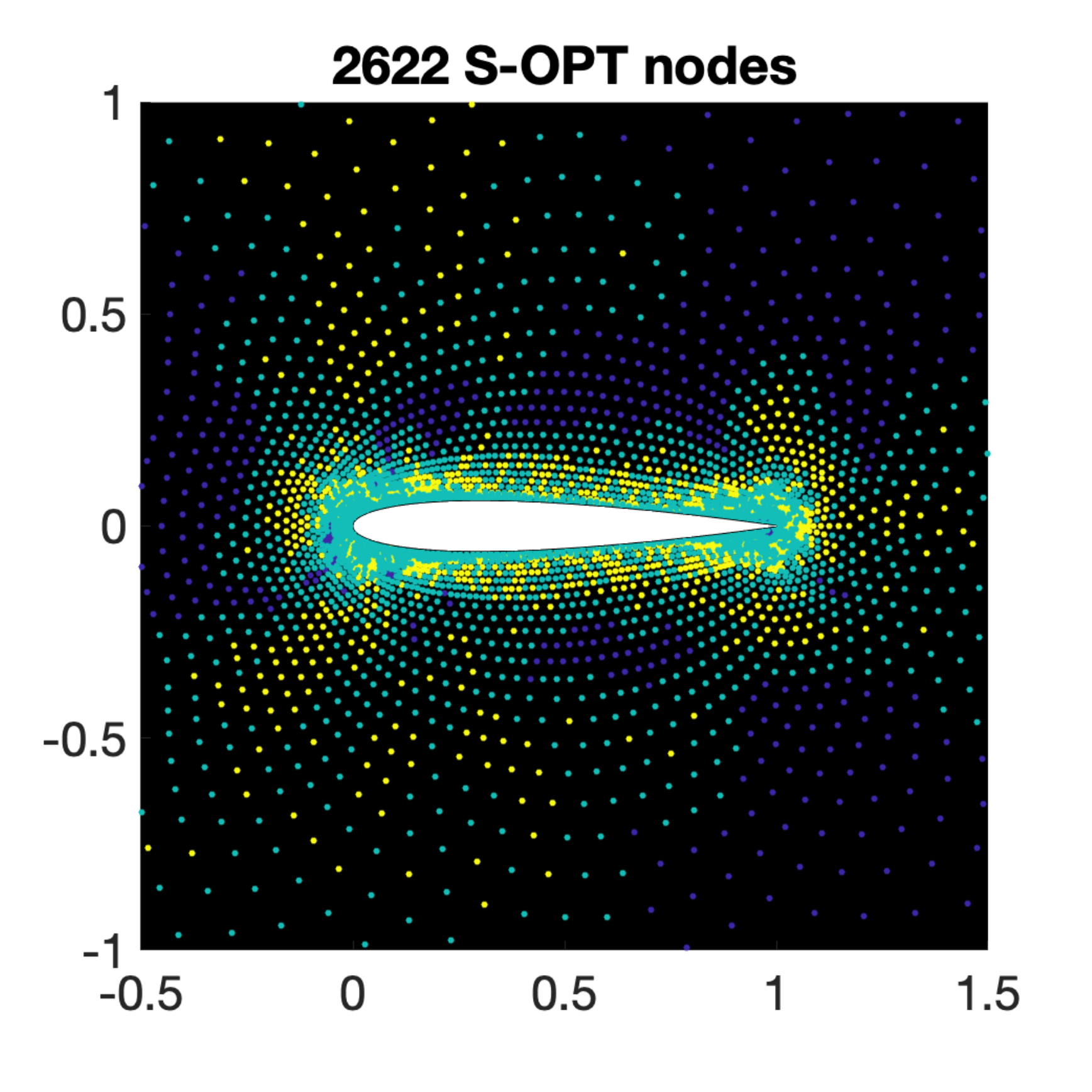}
     \end{subfigure}
     \hfill
     \begin{subfigure}[b]{0.45\textwidth}
         \centering
         \includegraphics[width=\textwidth]{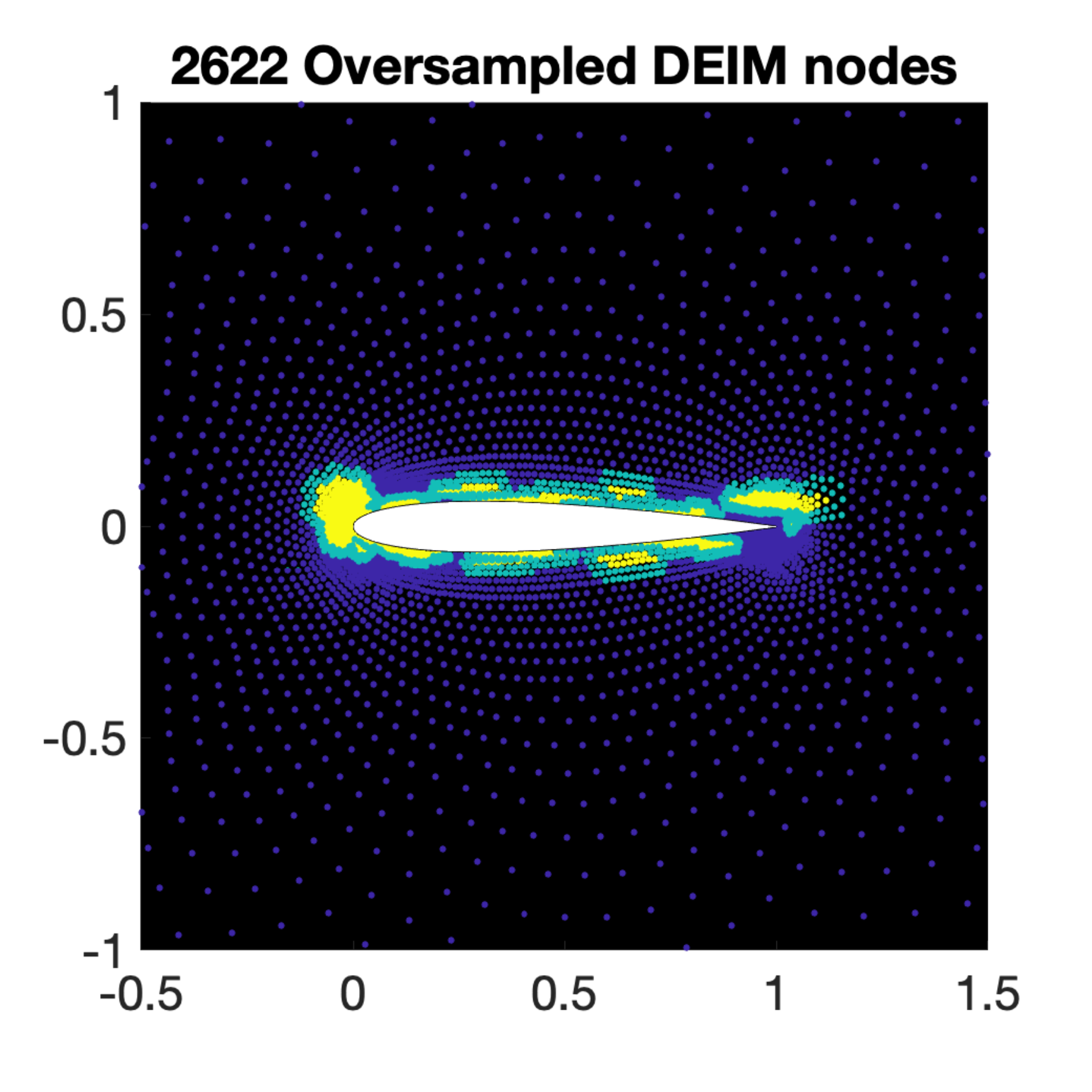}
     \end{subfigure}
        \caption{Partial domain showing node selection near the airfoil. Selected nodes are in yellow. Neighboring nodes required for residual computation are in cyan.}
        \label{fig:mask}
\end{figure}

\subsection{Lagrangian hydrodynamics}\label{sec:laghydro}
In the next two examples, we consider advection-dominated problems
arising in compressible gas dynamics. The Euler equation is used to model the
high-speed flow and shock wave propagation in a complex multimaterial setting,
and numerically solved in a moving Lagrangian frame \cite{harlow1971fluid}, 
assuming no external body force is exerted:
\begin{equation}\label{eq:euler}
  \begin{aligned}
    \text{momentum conservation}:& & \densitySymbol\frac{d\velocitySymbol}{d\timeSymbol} &=
    \gradientSymbol \cdot \stressSymbol \\
    \text{mass conservation}:& & \dfrac{1}{\densitySymbol}\frac{d\densitySymbol}{d\timeSymbol} &=
    -\gradientSymbol \cdot \velocitySymbol \\
    \text{energy conservation}:& & \densitySymbol\frac{d\energySymbol}{d\timeSymbol} &=
    \stressSymbol :  \gradientSymbol \velocitySymbol \\
    \text{equation of motion}:& & \frac{d\positionSymbol}{d\timeSymbol} &= \velocitySymbol.
  \end{aligned}
\end{equation}
Here, $\frac{d}{d\timeSymbol} = \frac{\partial}{\partial \timeSymbol} + 
\velocitySymbol \cdot \gradientSymbol$ is the material derivative, 
$\densitySymbol$ denotes the density of the fluid, $\positionSymbol$ and
$\velocitySymbol$ denote the position and the velocity of the particles in a
deformable medium $\solDomainSymbol(\timeSymbol)$ in the Eulerian coordinates,
$\stressSymbol$ denotes the deformation stress tensor, and $\energySymbol$
denotes the internal energy per unit mass. In gas dynamics,
the stress tensor is isotropic, and we write $\stressSymbol = -\pressureSymbol
\identitySymbol + \artificialStressSymbol$, where $\pressureSymbol$ denotes the 
thermodynamic pressure, and $\artificialStressSymbol$ denotes the artificial
viscosity stress. The thermodynamic pressure is described by the equation of
state, and can be expressed as a function of the density and the internal
energy. With the assumption of polytropic ideal gas with an 
adiabatic index $\adiabaticIndexSymbol > 1$, which yields the equation of state 
\begin{equation}\label{eq:EOS}
  \pressureSymbol = (\adiabaticIndexSymbol - 1) \densitySymbol \energySymbol.
\end{equation}
The system is prescribed with an initial condition and a boundary condition
$\velocitySymbol \cdot \normalSymbol = \neumannSymbol$, where $\normalSymbol$ is
the outward normal unit vector on the domain boundary. 

Using a high-order curvilinear finite element method (FEM) 
for Lagrangian hydrodynamics \cite{dobrev2012high} 
for semi-discretization of the Euler equation
results in the differential system:  
\begin{equation}\label{eq:laghos}
  \begin{aligned}
    \text{momentum conservation}:& & \kinematicMassMat\frac{d\velocity}{d\timeSymbol} &=
    -\forceMat(\velocity, \energy, \position; \param) \cdot\oneVec \\
    \text{energy conservation}:& & \thermodynamicMassMat\frac{d\energy}{d\timeSymbol} &=
    \forceMat(\velocity, \energy, \position; \param)^\top\cdot\velocity \\
    \text{equation of motion}:& & \frac{d\position}{d\timeSymbol} &= \velocity,
  \end{aligned}
\end{equation}
where $\velocity, \energy, \position: [0,T] \to \mathbb{R}$ denotes the 
FEM coefficient vector functions for velocity $\velocitySymbol$, 
internal energy density $\energySymbol$ and position $\positionSymbol$.
In order to obtain a fully discretized system of equations, we apply 
RK2-average scheme as the time integrator, 
a modification of the midpoint Runge--Kutta second-order scheme. 
The RK2-average scheme is written as
\begin{align}\label{eq:RK2-avg}
  \velocityt{\timeIndex+\frac{1}{2}} &= \velocityt{\timeIndex} - (\timestepk{\timeIndex}/2)
    \kinematicMassMat^{-1} \forceOnek{\timeIndex}, &
    \velocityt{\timeIndex+1} &= \velocityt{\timeIndex} - \timestepk{\timeIndex}
    \kinematicMassMat^{-1} \forceOnek{\timeIndex+\frac{1}{2}}, \\
  \energyt{\timeIndex+\frac{1}{2}} &= \energyt{\timeIndex} + (\timestepk{\timeIndex}/2)
    \thermodynamicMassMat^{-1} \forceTvk{\timeIndex}, &
    \energyt{\timeIndex+1} &= \energyt{\timeIndex} + \timestepk{\timeIndex}
    \thermodynamicMassMat^{-1} \avgforceTvk{\timeIndex+\frac{1}{2}}, \\
  \positiont{\timeIndex+\frac{1}{2}} &= \positiont{\timeIndex} + (\timestepk{\timeIndex}/2)
    \velocityt{\timeIndex+\frac{1}{2}}, & \positiont{\timeIndex+1} &=
    \positiont{\timeIndex} + \timestepk{\timeIndex} \avgvelocityt{\timeIndex+\frac{1}{2}},
\end{align}
where the state $\statet{\timeIndex} 
= (\velocityt{\timeIndex}; \energyt{\timeIndex}; 
\positiont{\timeIndex})^\top \in\RR^{\sizeWholeFE}$ 
is used to compute the updates 
\begin{align}
\forceOnek{\timeIndex} & = \left(\forceMat (\statet{\timeIndex}) \right ) \cdot\oneVec, &
\forceTvk{\timeIndex} & = \left(\forceMat (\statet{\timeIndex}) \right )^\top \cdot 
\velocityt{\timeIndex+\frac{1}{2}},
\end{align}
in the first stage. Similarly, $\statet{\timeIndex+\frac{1}{2}} = 
(\velocityt{\timeIndex+\frac{1}{2}}; \energyt{\timeIndex+\frac{1}{2}}; 
\positiont{\timeIndex+\frac{1}{2}})^\top \in\RR^{\sizeWholeFE}$ 
is used to compute the updates 
\begin{align}
\forceOnek{\timeIndex+\frac{1}{2}} 
& = \left(\forceMat (\statet{\timeIndex+\frac{1}{2}}) \right ) \cdot \oneVec, &
\avgforceTvk{\timeIndex+\frac{1}{2}} & = \left
(\forceMat (\statet{\timeIndex + \frac{1}{2}}) \right )^\top \cdot
\avgvelocityt{\timeIndex+\frac{1}{2}},
\end{align}
with $\avgvelocityt{\timeIndex+\frac{1}{2}} = (\velocityt{\timeIndex} +
\velocityt{\timeIndex+1})/2$ in the second stage.  
Since explicit Runge-Kutta methods are used, we need to control the time step
size in order to maintain the stability of the fully discrete schemes.  We
follow the automatic time step control algorithm described in Section 7.3 of
\cite{dobrev2012high}. 

We use a linear-subspace reduced order model technique for \eqref{eq:laghos} as developed in \cite{copeland2022reduced}. The solution nonlinear subspace (SNS) procedure in \cite{choi2020sns} is used
to build the nonlinear term bases $\forceOneBasis$ and $\forceTvBasis$
on the right-hand-side of the momentum conservation equation and energy conservation equation separately, and the sampling indices for hyper-reduction of each of nonlinear terms are used to construct the sampling matrices $\forceOneSamplingMat$ and $\forceTvSamplingMat$, respectively. 
In the following benchmark experiments, we compare the numerical results of reduced order model using the sampling algorithms discussed in Section~\ref{sec:salgs}, i.e., the oversampled DEIM and S-OPT.
With the automatic time step control algorithm, 
it is very likely that the temporal
discretization used in the hyper-reduced system is different from the full
order model even with the same problem setting.  To this end, we denote by
$\ntimestepROM$ the number of time steps in the fully discrete hyper-reduced
system, to differentiate it from the notation $\ntimestep$ for the full order
model. The $L^2$ relative error for each ROM field is measured
against the corresponding FOM solution at the final time $\finalTime$, which is
defined as:
\begin{align}\label{eq:relerrors-laghos}
  \relerrorVelocityt{L^2} &= \frac{\| \velocityt{\ntimestep} -
  \velocityApproxt{\ntimestepROM} \|_{L^2} }{\| \velocityt{\ntimestep} \|_{L^2}},& 
  \relerrorEnergyt{L^2} &= \frac{\| \energyt{\ntimestep} - 
  \energyApproxt{\ntimestepROM} \|_{L^2} }{\| \energyt{\ntimestep} \|_{L^2}},& 
  \relerrorPositiont{L^2} &= \frac{\| \positiont{\ntimestep} -
  \positionApproxt{\ntimestepROM} \|_{L^2} }{\| \positiont{\ntimestep} \|_{L^2}}.
\end{align}
All the Lagrangian hydrodynamics simulations in this
subsection use Quartz in Livermore Computing Center\footnote{High
performance computing at LLNL, https://hpc.llnl.gov/hardware/platforms/Quartz},
on Intel Xeon CPUs with 128 GB memory, peak TFLOPS of 3251.4, and peak single
CPU memory bandwidth of 77 GB/s.

\subsubsection{2D Gresho vortex}\label{sec:gresho}
The Gresho vortex problem is a two-dimensional standard benchmark test 
for the incompressible inviscid Navier--Stokes equations \cite{gresho1990theory}.
In this problem, we consider a manufactured smooth solution from extending the 
steady state Gresho vortex solution to the compressible Euler equations. 
For the detailed description of the set-up of the Gresho vortex problem, 
we refer the readers to Sections 6.1.1 and 6.2 of \cite{copeland2022reduced}. 
The final time is taken as $T=0.1$. 
Figure~\ref{fig:gresho_sol} shows the initial condition and the final-time solution 
for 2D Gresho vortex. It can be seen that the vortex is rotating. 

\begin{figure}[htb!]
    \centering
    \includegraphics[width=0.48\linewidth]{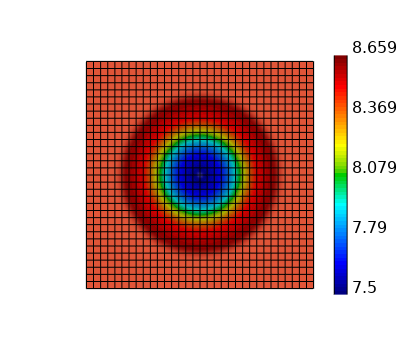}
    \hfill
    \includegraphics[width=0.48\linewidth]{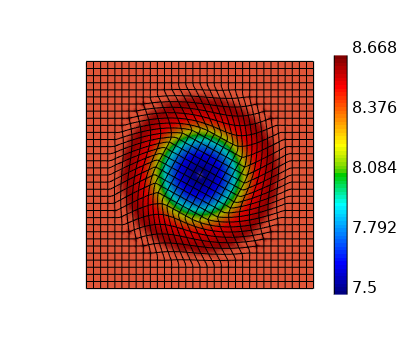}
    \caption{Initial condition (left) and final-time solution (right) for 2D Gresho vortex.}
    \label{fig:gresho_sol}
\end{figure}

We first investigate how the sampling algorithms affect the projection error of some sampled snapshots of the right hand side of the energy conservation equation. Following the notations in  Theorem~\ref{thm:error-estimate}, with 
$\bm{b} = \forceTvk{\timeIndex}$, $\bm{M} = \forceTvBasis$, and  
$\samplingmat = \forceTvSamplingMat$, we examine the oblique projection error 
$\|\forceTvk{\timeIndex} - \tilde{\forceTvk{\timeIndex}}(\forceTvSamplingMat)\|$ 
and use the orthogonal projection error 
$\|\text{proj}_{\forceTvBasis}^\perp \forceTvk{\timeIndex} \|$ as reference.
In Figure~\ref{fig:gresho_snap}, we illustrate the effects of the choice of sampling algorithm 
with varying number of sampling indices on the oblique projection error in some nonlinear snapshot samples, which is a crucial component of error bounding in PROM for nonlinear problems.

In this test, the dimension of the nonlinear term subspaces, i.e. the number of columns
in the nonlinear term bases, of the nonlinear term in momentum conservation equation and energy conservation equation is 32 and 72, respectively.
In each of the nonlinear term evaluations, the number of sampling indices is taken as the 
product of the nonlinear term basis dimension and the oversampling ratio, 
which takes a value between 2 and 15. 
The oblique projection error in the S-OPT
has a much faster and smoother decay to the orthogonal projection error 
than the oversampled DEIM selection for all the selected snapshot samples.

In Figure~\ref{fig:gresho_error}, we depict the final-time $L^2$ error of the reduced order model
solution against the number of sampling indices for both sampling algorithms. 
When the oversampled DEIM algorithm with the oversampling ratio less than 5 is used, 
the reduced order model is unstable and is not able to yield meaningful approximation. 
Moreover, with the oversampling ratio between 5 and 11, the final-time error for DEIM is oscillatory 
and significantly larger than the S-OPT
for all solution components in Lagrangian hydrodynamics.  
This suggests the advantage of both accuracy and stability by 
using the S-OPT over the oversampled DEIM selection. 
Figure~\ref{fig:gresho_spmesh} depicts the sample mesh, which consists of 
all the elements containing a sampling node, of both sampling algorithms 
at an oversampling ratio of 3. Similar to our previous observation, 
S-OPT tends to sample nodes in a more widespread manner. 
In this example, DEIM densely selects nodes close to the rotating vortex, 
as shown in Figure~\ref{fig:gresho_sol}. 

\begin{figure}[htb!]
    \centering
    \includegraphics[width=0.32\linewidth]{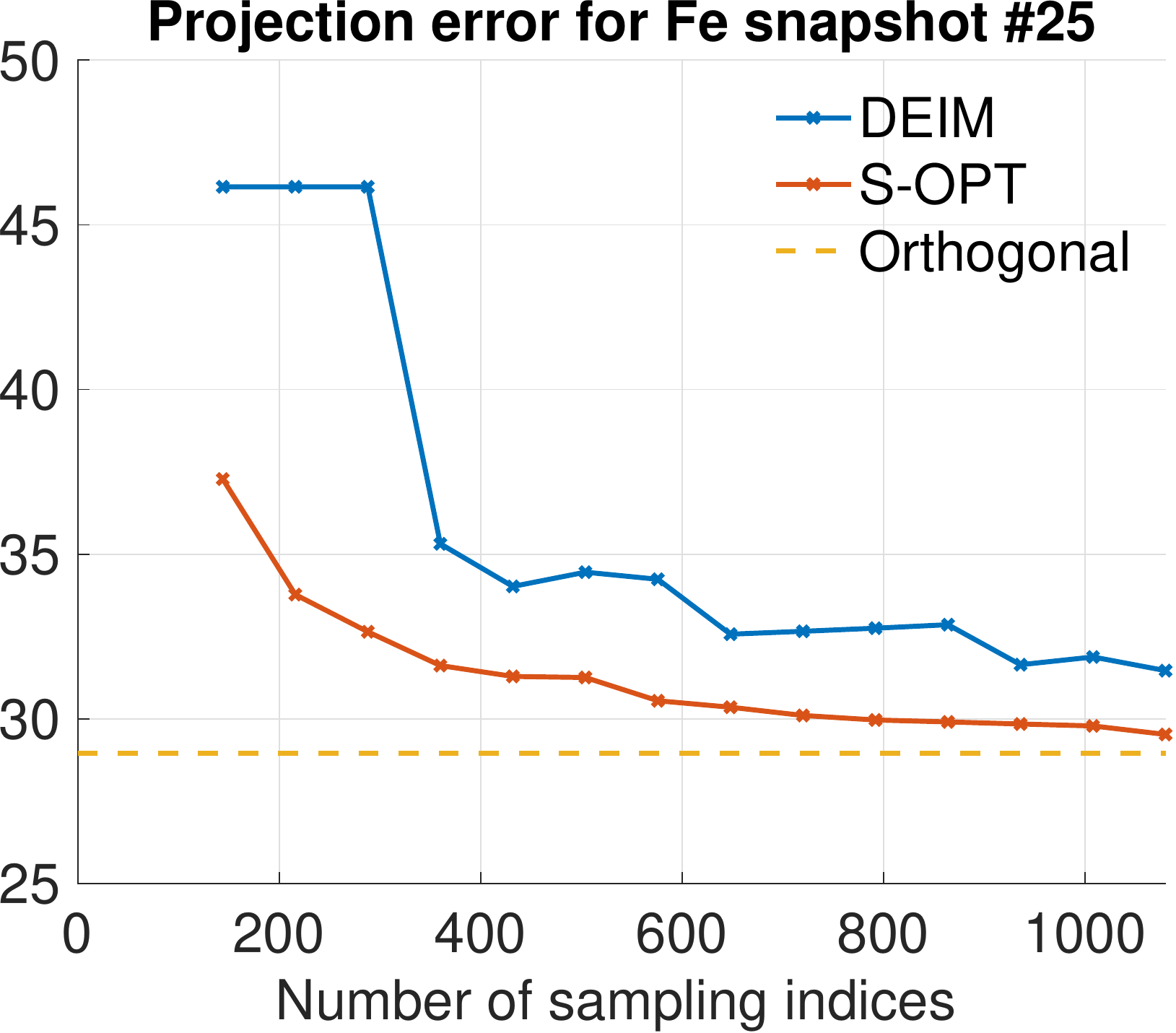}
    \hfill
    \includegraphics[width=0.32\linewidth]{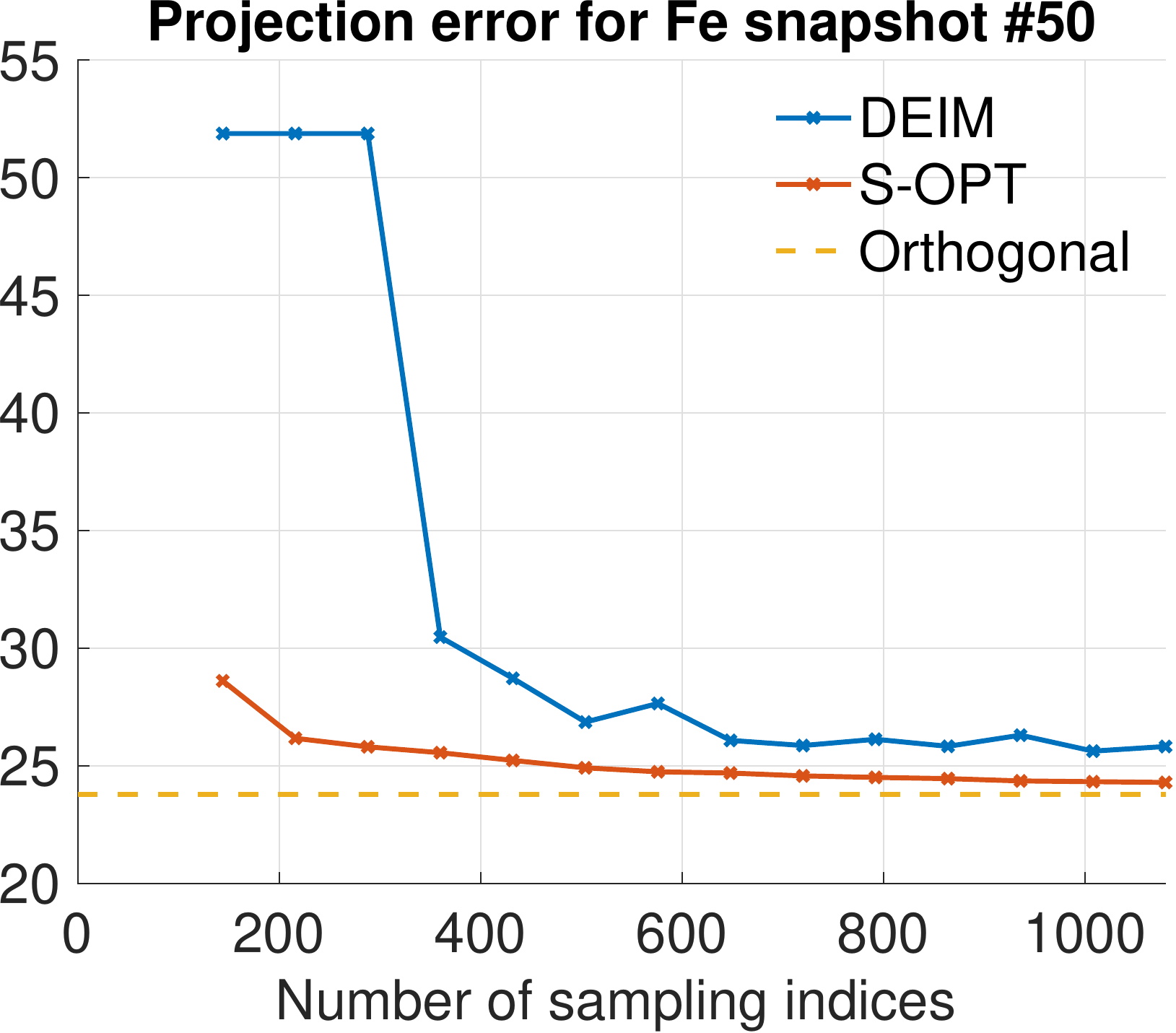}
    \hfill
    \includegraphics[width=0.32\linewidth]{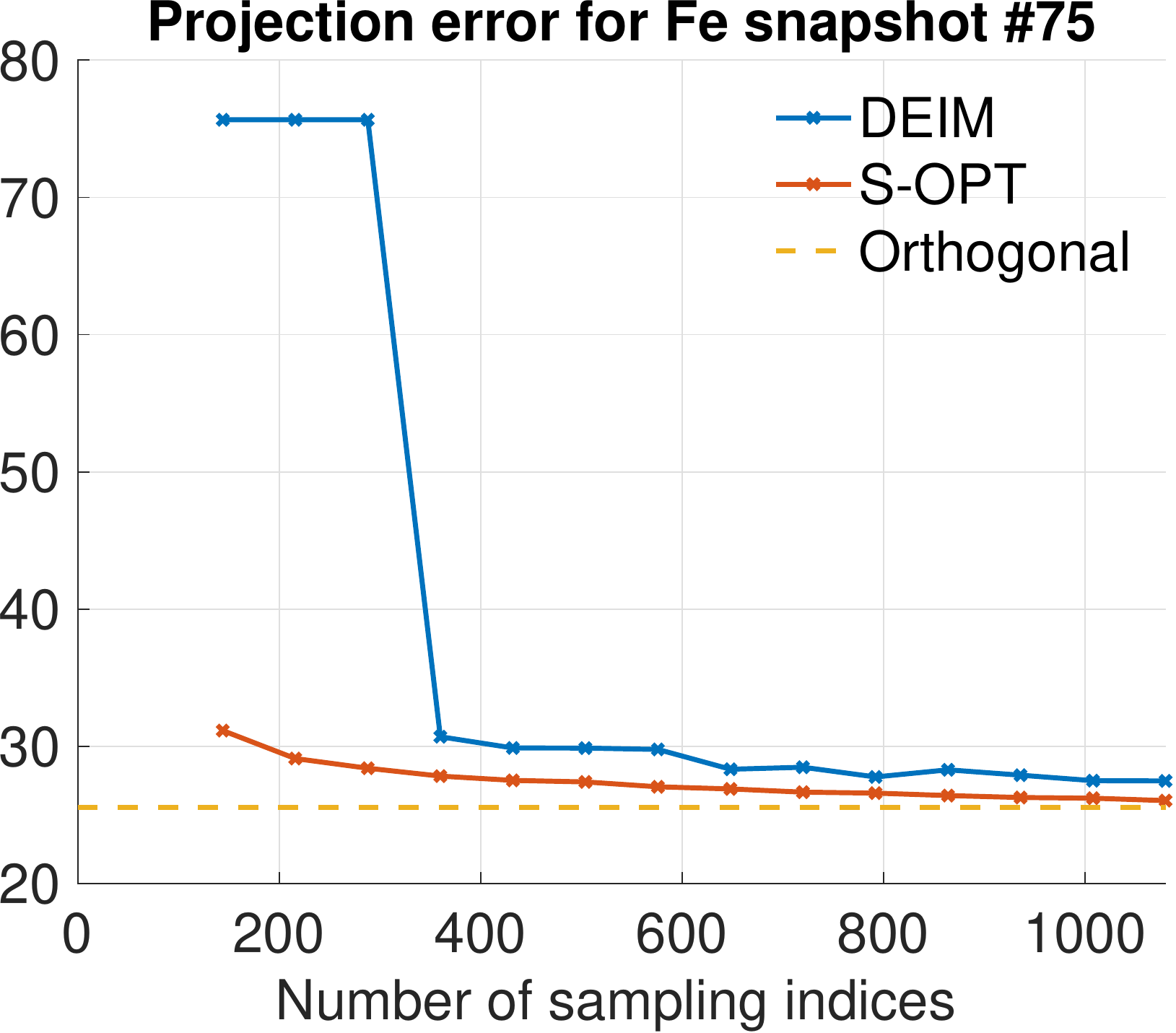}
    \caption{Oblique projection error in some snapshot samples of energy nonlinear term with varying number of sampling indices in Gresho vortex problem.}
    \label{fig:gresho_snap}
\end{figure}

\begin{figure}[htb!]
    \centering
    \includegraphics[width=0.32\linewidth]{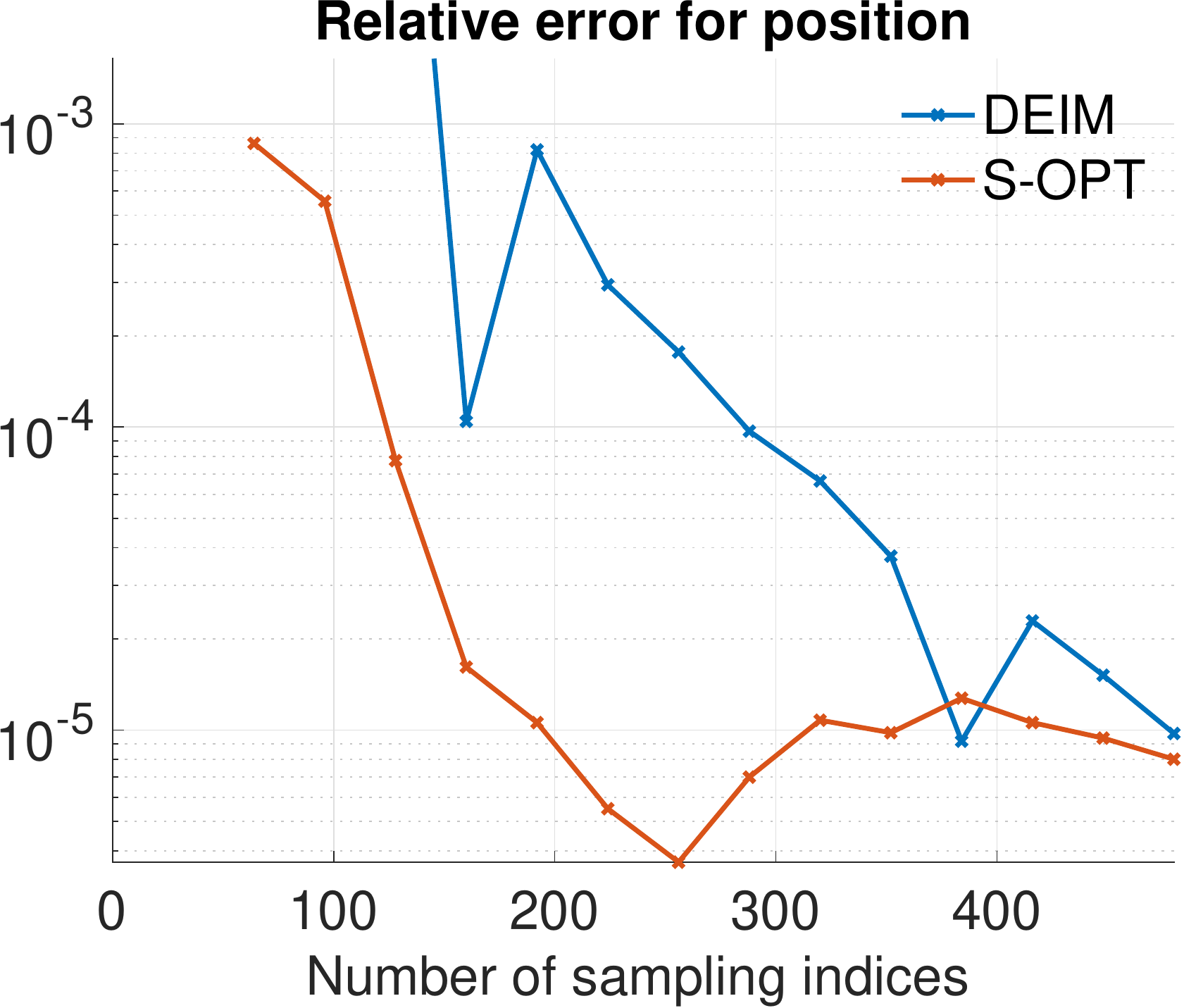}
    \hfill
    \includegraphics[width=0.32\linewidth]{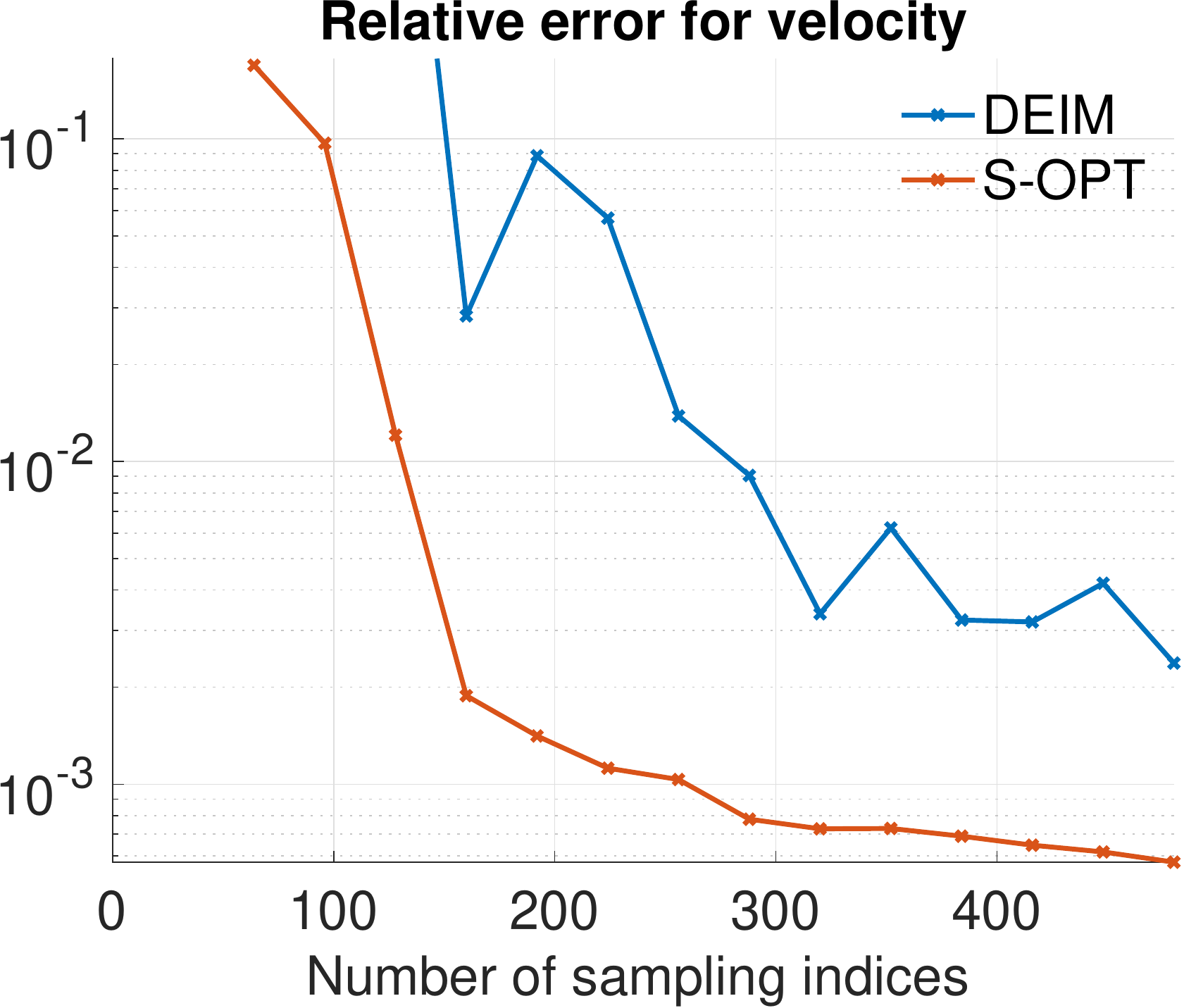}
    \hfill
    \includegraphics[width=0.32\linewidth]{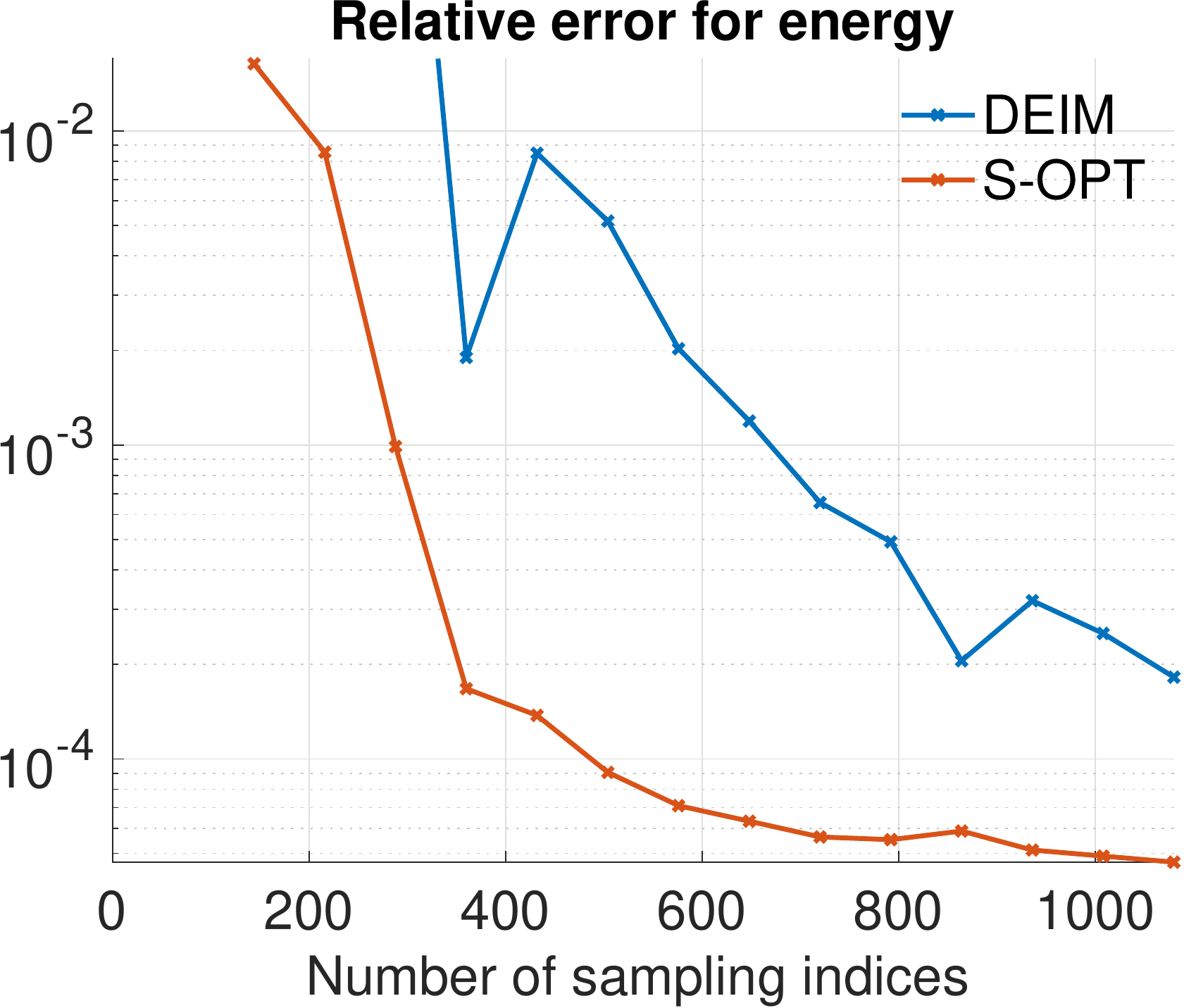}
    \caption{Final time solution error with varying number of sampling indices in Gresho vortex problem.}
    \label{fig:gresho_error}
\end{figure}

\begin{figure}[htb!]
     \centering
     \begin{subfigure}[b]{0.45\textwidth}
         \centering
         \includegraphics[width=\textwidth]{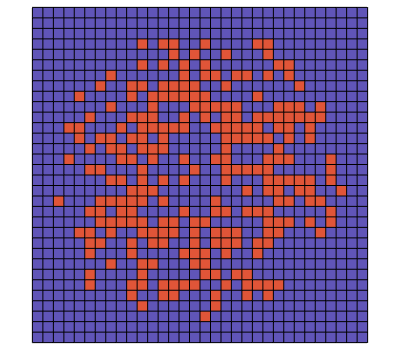}
     \end{subfigure}
     \hfill
     \begin{subfigure}[b]{0.45\textwidth}
         \centering
         \includegraphics[width=\textwidth]{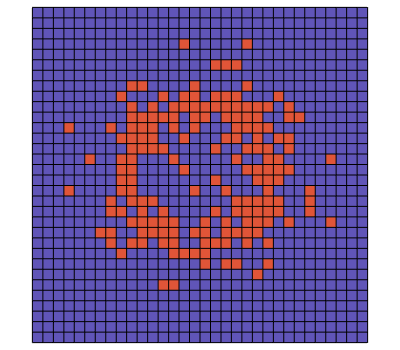}
     \end{subfigure}
        \caption{Sample mesh for Gresho vortex of S-OPT (left) and DEIM (right) sampling algorithms.}
        \label{fig:gresho_spmesh}
\end{figure}

\subsubsection{3D Sedov blast}\label{sec:sedov}
The Sedov blast problem is a three-dimensional standard shock hydrodynamic benchmark test
\cite{sedov1993similarity}, where we consider a delta source of
internal energy initially deposited at the origin of a three-dimensional cube. 
The final time is taken as $T=0.1$. 
For the detailed description of the set-up of the Sedov Blast problem, 
we refer the readers to Sections 6.1.2 and 6.2 of \cite{copeland2022reduced}. 

\begin{figure}[htb!]
    \centering
    \includegraphics[width=0.48\linewidth]{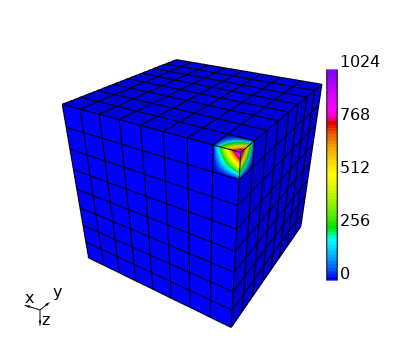}
    \hfill
    \includegraphics[width=0.48\linewidth]{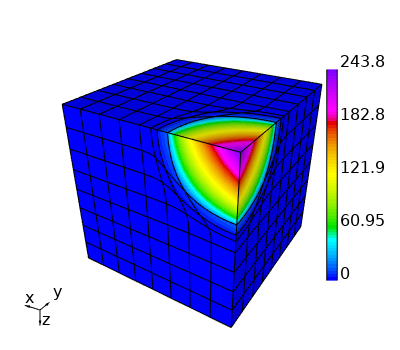}
    \caption{Initial condition (left) and final-time solution (right) for 3D Sedov blast.}
    \label{fig:sedov_sol}
\end{figure}

Again, we begin with investigating how the sampling algorithms affect the projection error of some sampled snapshots of the right hand side of the energy conservation equation by examining the oblique projection error 
$\|\forceTvk{\timeIndex} - \tilde{\forceTvk{\timeIndex}}(\forceTvSamplingMat)\|$, with 
the orthogonal projection error 
$\|\text{proj}_{\forceTvBasis}^\perp \forceTvk{\timeIndex} \|$ as reference.
In Figure~\ref{fig:sedov_snap}, we illustrate the effects of the choice of sampling algorithm 
with varying number of sampling indices on the oblique projection error in some nonlinear snapshot samples, 
which, again, is a crucial component of error bound in PROM for nonlinear problems. 
In this test, the dimensions of the nonlinear term subspaces, i.e. the number of columns
in the nonlinear term bases, for the momentum conservation equation and energy conservation equation are 53 and 13, respectively.
In each of the nonlinear term evaluations, the number of sampling indices is taken as the 
product of the nonlinear term basis dimension and the oversampling ratio, 
which takes value between 2 and 15. 
Similar to the Gresho vortex problem, 
while the oblique projection error in both sampling algorithms 
asymptotically decays to the orthogonal projection error, 
it can be observed that the decay is much faster  
with the S-OPT selection than the oversampled DEIM selection 
for all the selected snapshot samples. 
In Figure~\ref{fig:sedov_error}, we depict the final-time $L^2$ error of the 
solution against the number of sampling indices for both sampling algorithms. 
Again, it can be seen that for all solution components in Lagrangian hydrodynamics, 
the S-OPT selection gives much more stable decay 
in the solution error than the oversampled DEIM selection. 

\begin{figure}[htb!]
    \centering
    \includegraphics[width=0.32\linewidth]{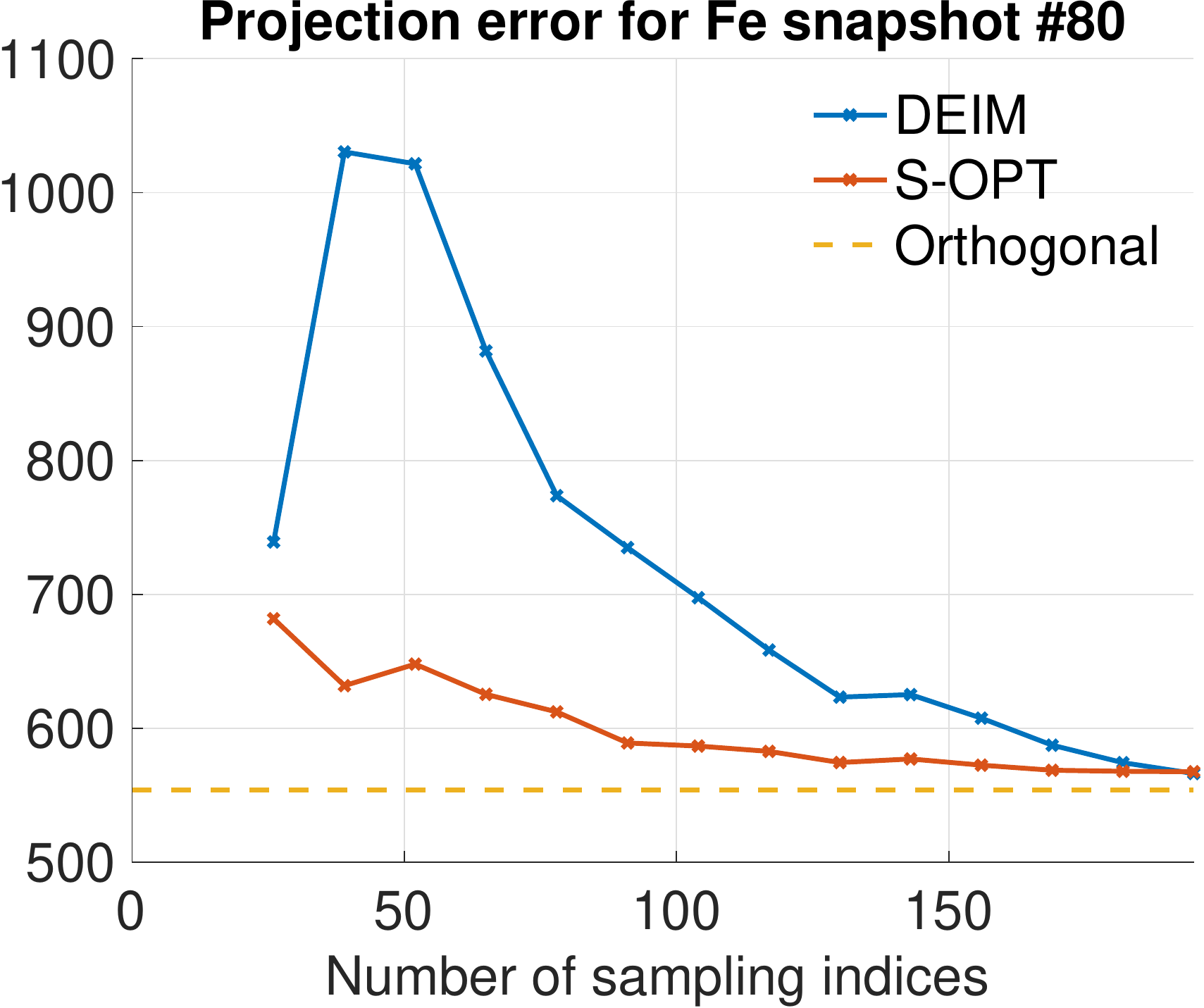}
    \hfill
    \includegraphics[width=0.32\linewidth]{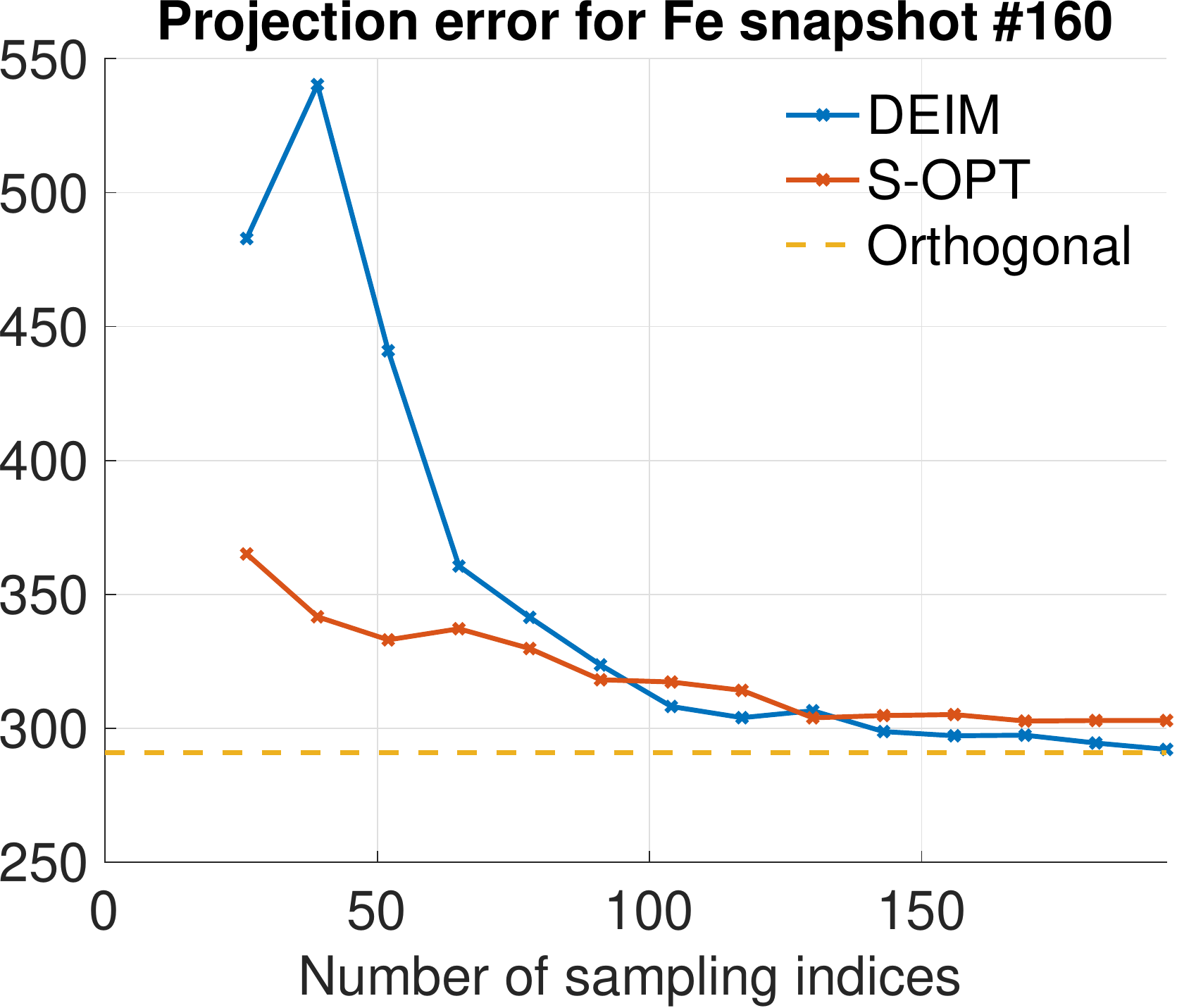}
    \hfill
    \includegraphics[width=0.32\linewidth]{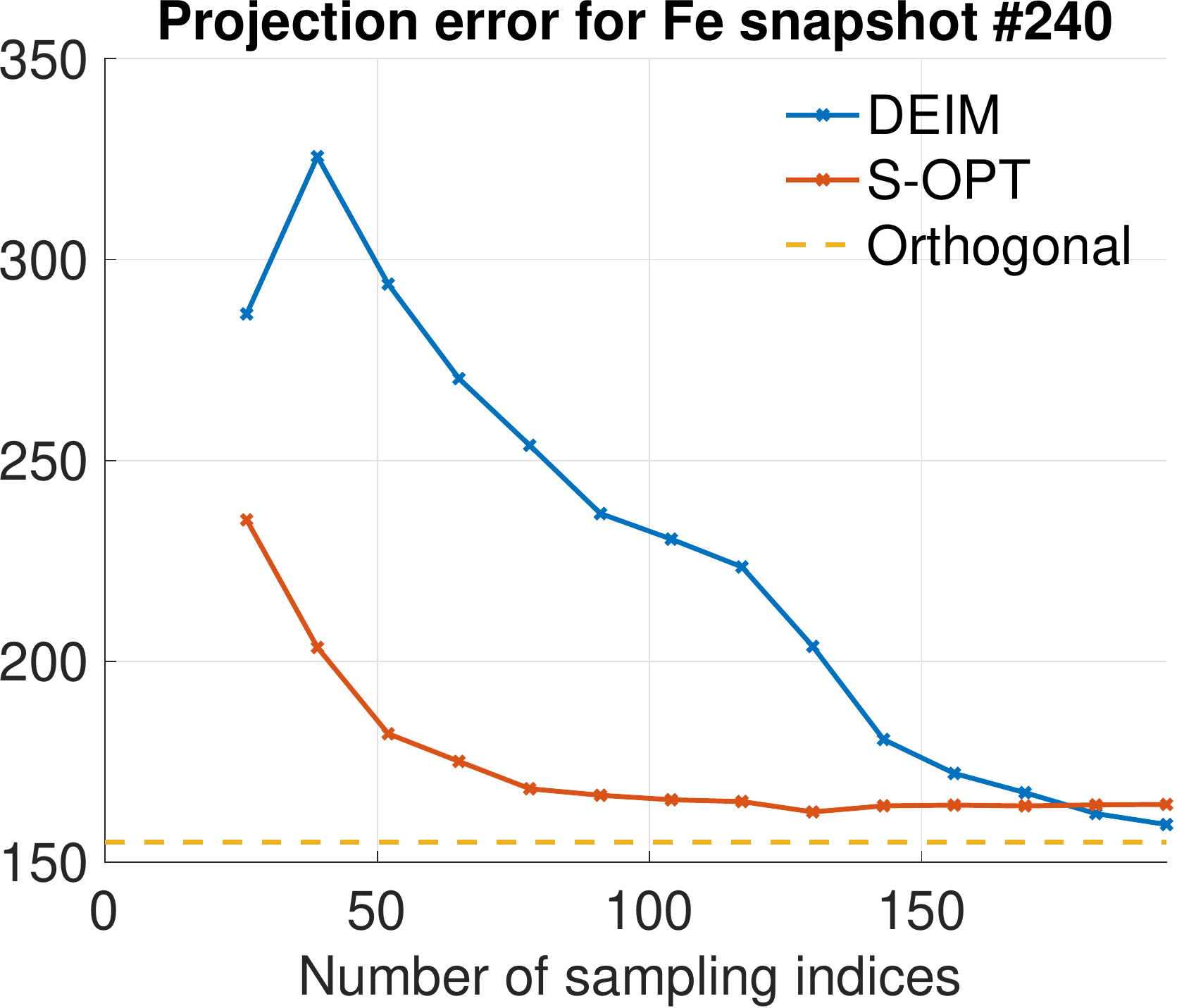}
    \caption{Oblique projection error in some snapshot samples of energy nonlinear term with varying number of sampling indices in Sedov blast problem.}
    \label{fig:sedov_snap}
\end{figure}

\begin{figure}[htb!]
    \centering
    \includegraphics[width=0.32\linewidth]{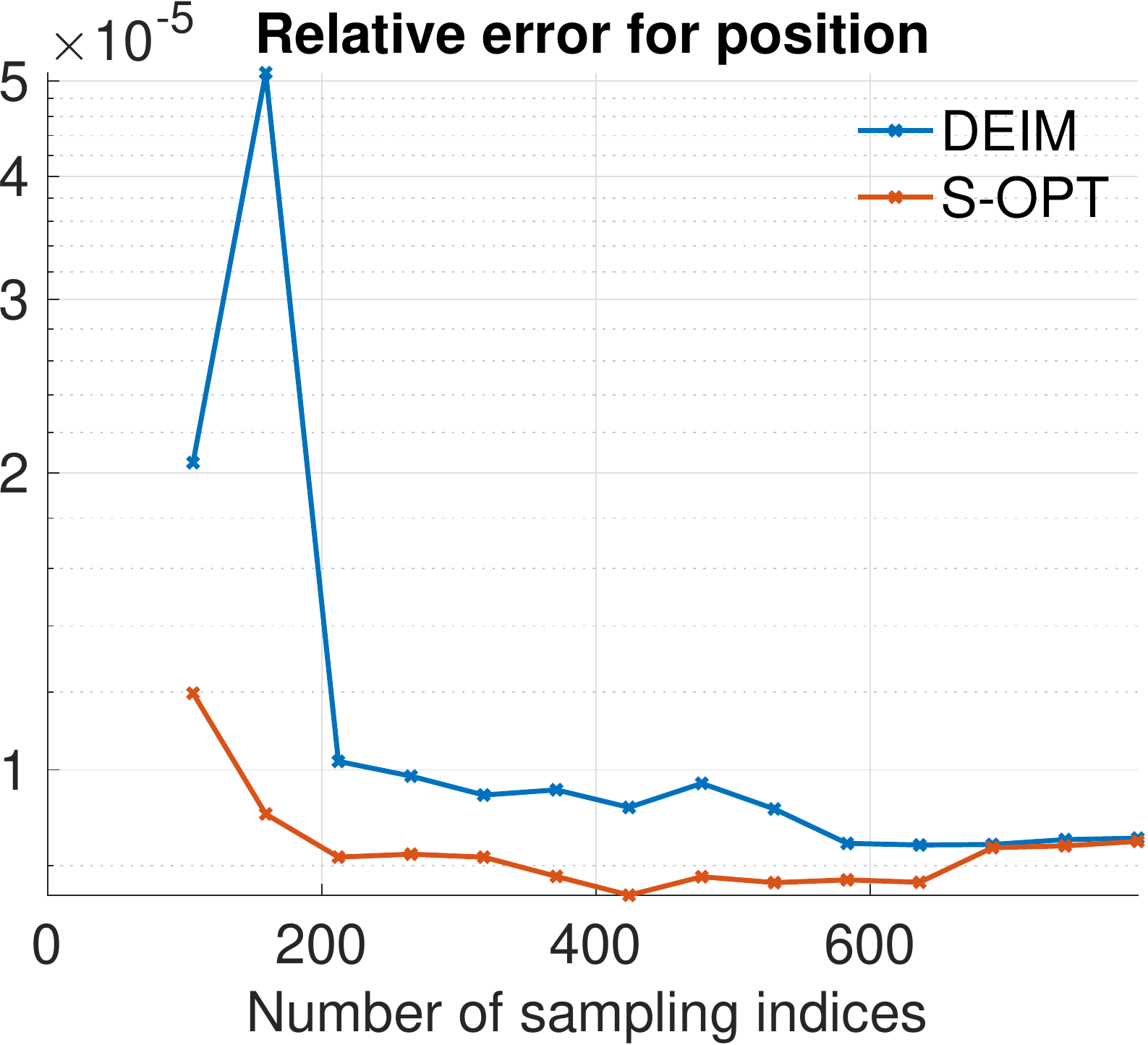}
    \hfill
    \includegraphics[width=0.32\linewidth]{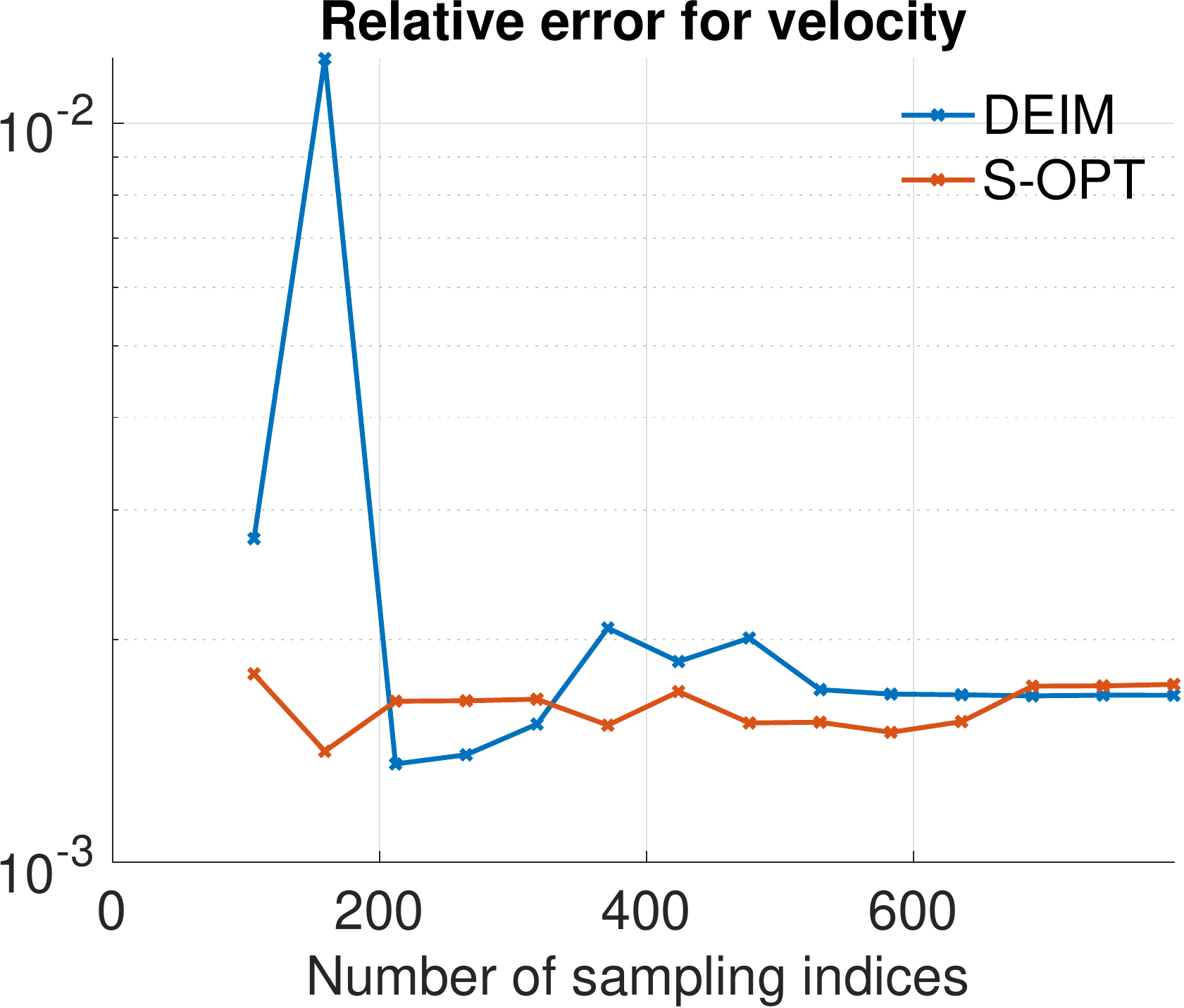}
    \hfill
    \includegraphics[width=0.32\linewidth]{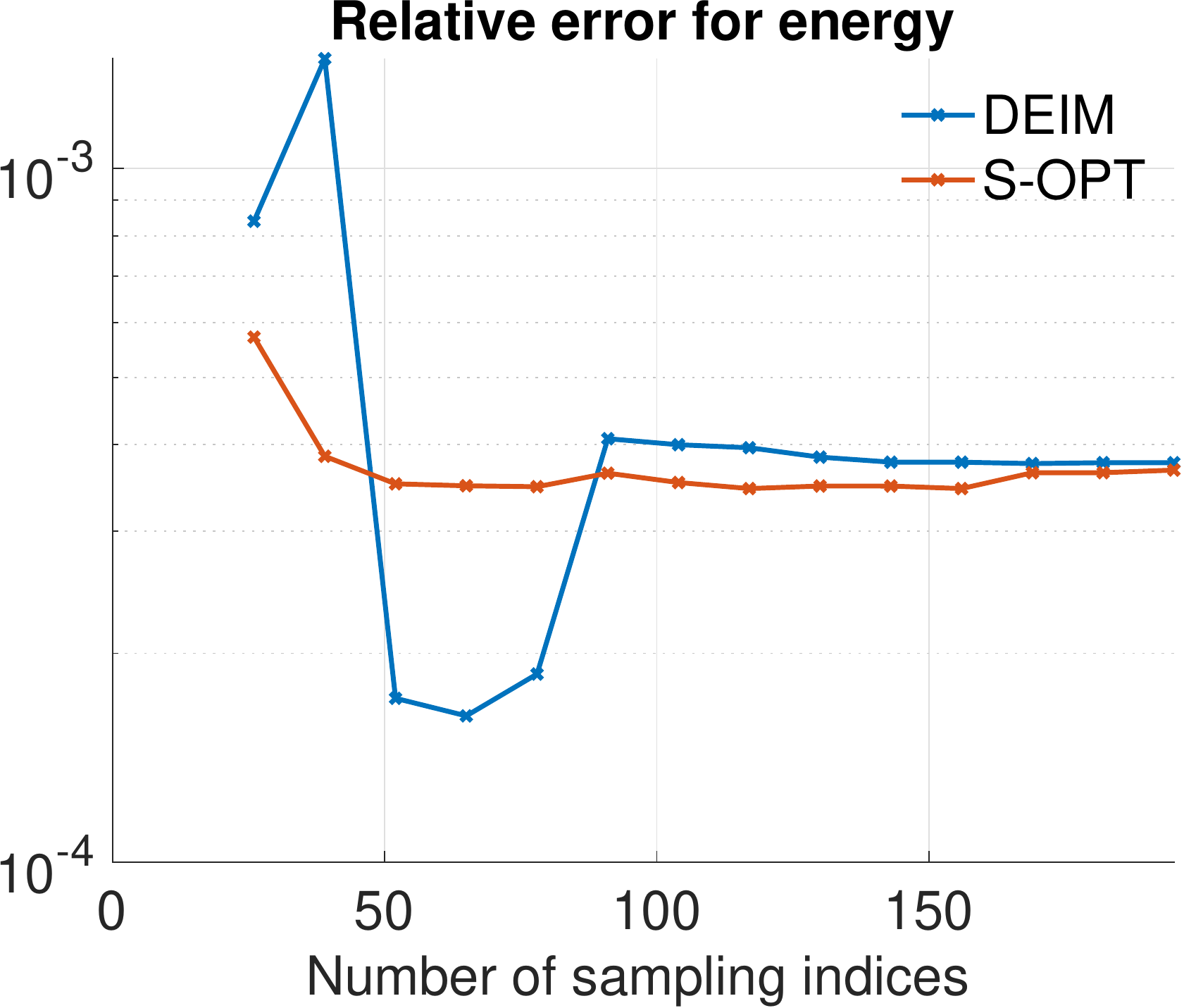}
    \caption{Final time solution error with varying number of sampling indices in Sedov blast problem.}
    \label{fig:sedov_error}
\end{figure}

\begin{figure}[htb!]
     \centering
     \begin{subfigure}[b]{0.45\textwidth}
         \centering
         \includegraphics[width=\textwidth]{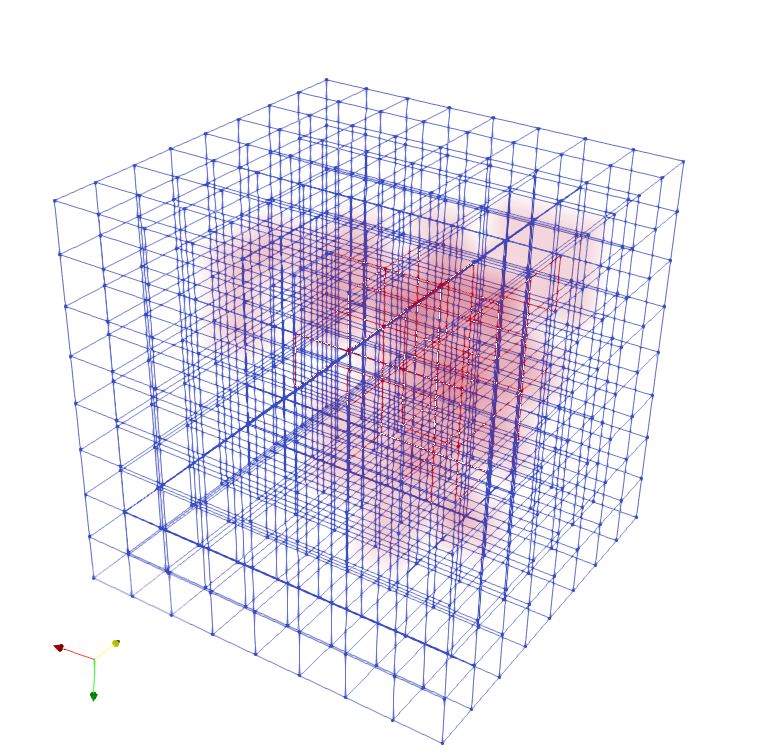}
     \end{subfigure}
     \hfill
     \begin{subfigure}[b]{0.45\textwidth}
         \centering
         \includegraphics[width=\textwidth]{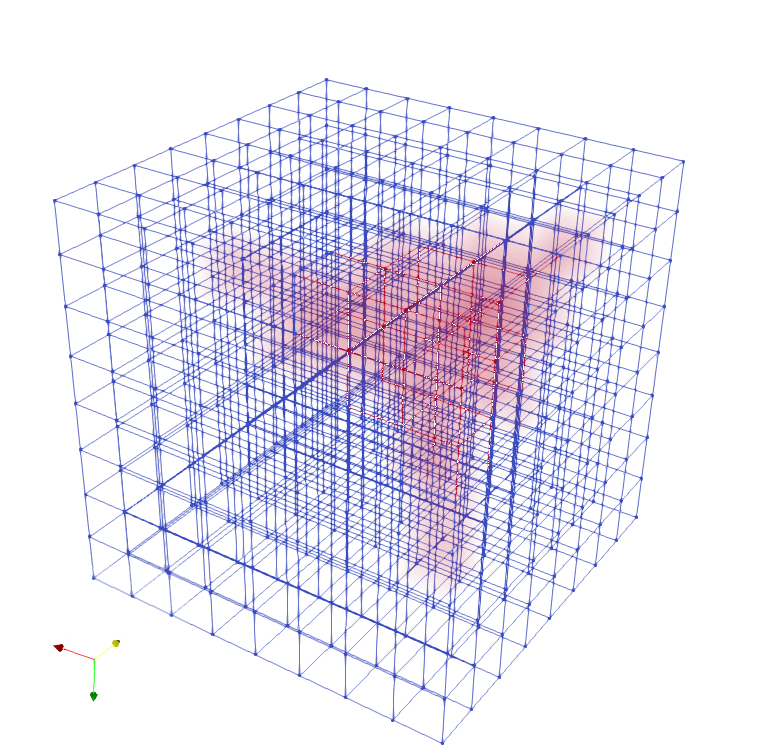}
     \end{subfigure}
        \caption{Sample mesh for Sedov blast problem of S-OPT (left) and DEIM (right) sampling algorithms.}
        \label{fig:sedov_spmesh}
\end{figure}

% -----------------------
% -    Conclusions      -
% -----------------------
\section{Conclusions}
\label{sec:conclusions}

This work proposes the use of the S-OPT algorithm for selecting indices for hyper-reduction
of projection-based reduced order models. The algorithm chooses indices while trying to keep the POD 
basis orthogonal to enhance the numerical stability, while other selection methods, such as DEIM, do not. 

As shown in the results, the indices chosen by the two algorithms tend to lie in the regions 
experiencing the most change from snapshot to snapshot. However, for the S-OPT algorithm those
indices also tend to be more spread out. 

The resulting error for the ROM is smaller when using the S-OPT algorithm, especially when using
a small number of total selected indices. It is expected that the relative error of the ROM will increase
when selecting fewer indices, but the error when using the S-OPT algorithm appears to increase more smoothly,
whereas selecting fewer indices using the oversampled DEIM algorithm has a greater adverse effect on the 
ROM performance.

A topic of future research is comparing S-OPT to other selection algorithms, and investigating whether or not
there is an indication of how many indices to select for a certain error bound. 

\appendix

\section*{Acknowledgments}
This work was performed at Lawrence Livermore National Laboratory and partially funded by two LDRDs (21-FS-042 and 21-SI-006).  Lawrence Livermore National Laboratory is operated by Lawrence Livermore National Security, LLC, for the U.S. Department of Energy, National Nuclear Security Administration under Contract DE-AC52-07NA27344 and LLNL-JRNL-832493.

\section*{Disclaimer}
This document was prepared as an account of work sponsored by an agency of the
United States government.  Neither the United States government nor Lawrence
Livermore National Security, LLC, nor any of their employees makes any warranty,
expressed or implied, or assumes any legal liability or responsibility for the
accuracy, completeness, or usefulness of any information, apparatus, product, or
process disclosed, or represents that its use would not infringe privately owned
rights.  Reference herein to any specific commercial product, process, or
service by trade name, trademark, manufacturer, or otherwise does not
necessarily constitute or imply its endorsement, recommendation, or favoring by
the United States government or Lawrence Livermore National Security, LLC.  The
views and opinions of authors expressed herein do not necessarily state or
reflect those of the United States government or Lawrence Livermore National
Security, LLC, and shall not be used for advertising or product endorsement
purposes.

\bibliographystyle{siamplain}
\bibliography{references}

\begin{thebibliography}{10}

\bibitem{OptimalDesign}
{\em {Optimal Design}}.
\newblock \url{https://en.wikipedia.org/wiki/Optimal_design}.
\newblock Accessed: 2021-02-01.

\bibitem{amsallem2015design}
{\sc D.~Amsallem, M.~Zahr, Y.~Choi, and C.~Farhat}, {\em Design optimization
  using hyper-reduced-order models}, Structural and Multidisciplinary
  Optimization, 51 (2015), pp.~919--940.

\bibitem{antil2012reduced}
{\sc H.~Antil, M.~Heinkenschloss, R.~H. Hoppe, C.~Linsenmann, and A.~Wixforth},
  {\em Reduced order modeling based shape optimization of surface acoustic wave
  driven microfluidic biochips}, Mathematics and Computers in Simulation, 82
  (2012), pp.~1986--2003.

\bibitem{pod}
{\sc G.~Berkooz, P.~Holmes, and J.~L. Lumley}, {\em The proper orthogonal
  decomposition in the analysis of turbulent flows}, Annual Review of Fluid
  Mechanics, 25 (1993), pp.~539--575.

\bibitem{biegler2011large}
{\sc L.~Biegler, G.~Biros, O.~Ghattas, M.~Heinkenschloss, D.~Keyes, B.~Mallick,
  L.~Tenorio, B.~van Bloemen~Waanders, K.~Willcox, and Y.~Marzouk}, {\em
  Large-scale inverse problems and quantification of uncertainty}, vol.~712,
  John Wiley \& Sons, 2011.

\bibitem{burkardt2006pod}
{\sc J.~Burkardt, M.~Gunzburger, and H.-C. Lee}, {\em {POD} and {CVT}-based
  reduced-order modeling of {N}avier--{S}tokes flows}, Computer Methods in
  Applied Mechanics and Engineering, 196 (2006), pp.~337--355.

\bibitem{carlberg2011efficient}
{\sc K.~Carlberg, C.~Bou‐Mosleh, and C.~Farhat}, {\em Efficient non-linear
  model reduction via a least--squares {P}etrov--{G}alerkin projection and
  compressive tensor approximations}, International Journal for Numerical
  Methods in Engineering, 86 (2011), pp.~155--181.

\bibitem{carlberg2018conservative}
{\sc K.~Carlberg, Y.~Choi, and S.~Sargsyan}, {\em Conservative model reduction
  for finite-volume models}, Journal of Computational Physics, 371 (2018),
  pp.~280--314.

\bibitem{carlberg2013gnat}
{\sc K.~Carlberg, C.~Farhat, J.~Cortial, and D.~Amsallem}, {\em The {GNAT}
  method for nonlinear model reduction: Effective implementation and
  application to computational fluid dynamics and turbulent flows}, Journal of
  Computational Physics, 242 (2013), pp.~623--647.

\bibitem{chaturantabut2010nonlinear}
{\sc S.~Chaturantabut and D.~C. Sorensen}, {\em Nonlinear model reduction via
  discrete empirical interpolation}, SIAM Journal on Scientific Computing, 32
  (2010), pp.~2737--2764.

\bibitem{cheng2016reduced}
{\sc M.-C. Cheng}, {\em A reduced-order representation of the {S}chr{\"o}dinger
  equation}, AIP Advances, 6 (2016), p.~095121.

\bibitem{cheung2022local}
{\sc S.~W. Cheung, Y.~Choi, D.~M. Copeland, and K.~Huynh}, {\em Local
  lagrangian reduced-order modeling for rayleigh-taylor instability by solution
  manifold decomposition}, preprint arXiv:2201.07335,  (2022).

\bibitem{choi2012simultaneous}
{\sc Y.~Choi}, {\em Simultaneous analysis and design in {PDE}-constrained
  optimization}, PhD thesis, Stanford University, 2012.

\bibitem{librom}
{\sc Y.~Choi, W.~J. Arrighi, D.~M. Copeland, R.~W. Anderson, G.~M. Oxberry, and
  U.~N. N.~S. Administration}, {\em lib{ROM}}, 2019,
  \url{https://www.osti.gov//servlets/purl/1505575}.

\bibitem{choi2020gradient}
{\sc Y.~Choi, G.~Boncoraglio, S.~Anderson, D.~Amsallem, and C.~Farhat}, {\em
  Gradient-based constrained optimization using a database of linear
  reduced-order models}, Journal of Computational Physics, 423 (2020),
  p.~109787.

\bibitem{choi2021space}
{\sc Y.~Choi, P.~Brown, B.~Arrighi, R.~Anderson, and K.~Huynh}, {\em Space-time
  reduced order model for large-scale linear dynamical systems with application
  to {B}oltzmann transport problems}, Journal of Computational Physics, 424
  (2021), p.~109845.

\bibitem{choi2019space}
{\sc Y.~Choi and K.~Carlberg}, {\em Space--time least-squares
  {P}etrov--{G}alerkin projection for nonlinear model reduction}, SIAM Journal
  on Scientific Computing, 41 (2019), pp.~A26--A58.

\bibitem{choi2020sns}
{\sc Y.~Choi, D.~Coombs, and R.~Anderson}, {\em {SNS}: a solution-based
  nonlinear subspace method for time-dependent model order reduction}, SIAM
  Journal on Scientific Computing, 42 (2020), pp.~A1116--A1146.

\bibitem{choi2015practical}
{\sc Y.~Choi, C.~Farhat, W.~Murray, and M.~Saunders}, {\em A practical
  factorization of a {S}chur complement for {PDE}-constrained distributed
  optimal control}, Journal of Scientific Computing, 65 (2015), pp.~576--597.

\bibitem{choi2019accelerating}
{\sc Y.~Choi, G.~Oxberry, D.~White, and T.~Kirchdoerfer}, {\em Accelerating
  design optimization using reduced order models}, preprint arXiv:1909.11320,
  (2019).

\bibitem{copeland2022reduced}
{\sc D.~M. Copeland, S.~W. Cheung, K.~Huynh, and Y.~Choi}, {\em Reduced order
  models for lagrangian hydrodynamics}, Computer Methods in Applied Mechanics
  and Engineering, 388 (2022), p.~114259.

\bibitem{de2018adaptive}
{\sc M.~A.~S. De~Troya and D.~A. Tortorelli}, {\em Adaptive mesh refinement in
  stress-constrained topology optimization}, Structural and Multidisciplinary
  Optimization, 58 (2018), pp.~2369--2386.

\bibitem{de2020three}
{\sc M.~A.~S. de~Troya and D.~A. Tortorelli}, {\em Three-dimensional adaptive
  mesh refinement in stress-constrained topology optimization}, Structural and
  Multidisciplinary Optimization, 62 (2020), pp.~2467--2479.

\bibitem{dimitriu2013application}
{\sc G.~Dimitriu, I.~M. Navon, and R.~{\c{S}}tef{\u{a}}nescu}, {\em Application
  of {POD-DEIM} approach for dimension reduction of a diffusive predator-prey
  system with allee effect}, in International Conference on Large-Scale
  Scientific Computing, Springer, 2013, pp.~373--381.

\bibitem{dobrev2012high}
{\sc V.~A. Dobrev, T.~V. Kolev, and R.~N. Rieben}, {\em High-order curvilinear
  finite element methods for {L}agrangian hydrodynamics}, SIAM Journal on
  Scientific Computing, 34 (2012), pp.~B606--B641.

\bibitem{drmac2016new}
{\sc Z.~Drmac and S.~Gugercin}, {\em A new selection operator for the discrete
  empirical interpolation method---improved a priori error bound and
  extensions}, SIAM Journal on Scientific Computing, 38 (2016), pp.~A631--A648.

\bibitem{su2}
{\sc T.~D. Economon, F.~Palacios, S.~R. Copeland, T.~W. Lukaczyk, and J.~J.
  Alonso}, {\em Su2: An open-source suite for multiphysics simulation and
  design}, AIAA Journal, 54 (2016), pp.~828--846.

\bibitem{everson1995}
{\sc R.~Everson and L.~Sirovich}, {\em Karhunen--lo\`{e}ve procedure for gappy
  data}, Journal of the Optical Society of America A, 12 (1995),
  pp.~1657--1664.

\bibitem{fritzen2018algorithmic}
{\sc F.~Fritzen, B.~Haasdonk, D.~Ryckelynck, and S.~Sch{\"o}ps}, {\em An
  algorithmic comparison of the hyper-reduction and the discrete empirical
  interpolation method for a nonlinear thermal problem}, Mathematical and
  Computational Applications, 23 (2018).

\bibitem{fu2018pod}
{\sc H.~Fu, H.~Wang, and Z.~Wang}, {\em {POD/DEIM} reduced-order modeling of
  time-fractional partial differential equations with applications in parameter
  identification}, Journal of Scientific Computing, 74 (2018), pp.~220--243.

\bibitem{galbally2010non}
{\sc D.~Galbally, K.~Fidkowski, K.~Willcox, and O.~Ghattas}, {\em Non-linear
  model reduction for uncertainty quantification in large-scale inverse
  problems}, International Journal for Numerical Methods in Engineering, 81
  (2010), pp.~1581--1608.

\bibitem{ghasemi2015localized}
{\sc M.~Ghasemi and E.~Gildin}, {\em Localized model reduction in porous media
  flow}, IFAC-PapersOnLine, 48 (2015), pp.~242--247.

\bibitem{gresho1990theory}
{\sc P.~M. Gresho and S.~T. Chan}, {\em On the theory of semi‐implicit
  projection methods for viscous incompressible flow and its implementation via
  a finite element method that also introduces a nearly consistent mass matrix.
  part 2: Implementation}, International Journal for Numerical Methods in
  Fluids, 11 (1990), pp.~621--659.

\bibitem{guo2020constructing}
{\sc L.~Guo, A.~Narayan, and T.~Zhou}, {\em Constructing least-squares
  polynomial approximations}, SIAM Review, 62 (2020), pp.~483--508.

\bibitem{hadigol2018least}
{\sc M.~Hadigol and A.~Doostan}, {\em Least squares polynomial chaos expansion:
  A review of sampling strategies}, Computer Methods in Applied Mechanics and
  Engineering, 332 (2018), pp.~382--407.

\bibitem{harlow1971fluid}
{\sc F.~Harlow and A.~Amsfen}, {\em Fluid Dynamics: A LASL Monograph}, Tech.
  rep. LA-4700, Los Alamos Scientific Laboratory, 1971.

\bibitem{hhbumps}
{\sc R.~M. Hicks and P.~A. Henne}, {\em Wing design by numerical optimization},
  Journal of Aircraft, 15 (1978), pp.~407--412.

\bibitem{hoang2020domain}
{\sc C.~Hoang, Y.~Choi, and K.~Carlberg}, {\em Domain-decomposition
  least-squares {P}etrov-{G}alerkin ({DD-LSPG}) nonlinear model reduction},
  Computer Methods in Applied Mechanics and Engineering, 384 (2021), p.~113997.

\bibitem{hotelling1933analysis}
{\sc H.~Hotelling}, {\em Analysis of a complex of statistical variables into
  principal components.}, Journal of Educational Psychology, 24 (1933), p.~417.

\bibitem{jiang2019implementation}
{\sc R.~Jiang and L.~J. Durlofsky}, {\em Implementation and detailed assessment
  of a {GNAT} reduced-order model for subsurface flow simulation}, Journal of
  Computational Physics, 379 (2019), pp.~192--213.

\bibitem{kim2020efficient}
{\sc Y.~Kim, Y.~Choi, D.~Widemann, and T.~Zohdi}, {\em Efficient nonlinear
  manifold reduced order model}, arXiv preprint arXiv:2011.07727,  (2020).

\bibitem{kim2022fast}
{\sc Y.~Kim, Y.~Choi, D.~Widemann, and T.~Zohdi}, {\em A fast and accurate
  physics-informed neural network reduced order model with shallow masked
  autoencoder}, Journal of Computational Physics,  (2022), p.~110841.

\bibitem{kim2021efficient}
{\sc Y.~Kim, K.~Wang, and Y.~Choi}, {\em Efficient space--time reduced order
  model for linear dynamical systems in python using less than 120 lines of
  code}, Mathematics, 9 (2021), p.~1690.

\bibitem{lee2019deep}
{\sc K.~Lee and K.~Carlberg}, {\em Deep conservation: A latent dynamics model
  for exact satisfaction of physical conservation laws}, preprint
  arXiv:1909.09754,  (2019).

\bibitem{lee2020model}
{\sc K.~Lee and K.~T. Carlberg}, {\em Model reduction of dynamical systems on
  nonlinear manifolds using deep convolutional autoencoders}, Journal of
  Computational Physics, 404 (2020), p.~108973.

\bibitem{loeve1955}
{\sc M.~Loeve}, {\em Probability Theory}, D. Van Nostrand, New York, 1955.

\bibitem{mcbane2021component}
{\sc S.~McBane and Y.~Choi}, {\em Component-wise reduced order model
  lattice-type structure design}, Computer Methods in Applied Mechanics and
  Engineering, 381 (2021), p.~113813.

\bibitem{mojgani2017lagrangian}
{\sc R.~Mojgani and M.~Balajewicz}, {\em {L}agrangian basis method for
  dimensionality reduction of convection dominated nonlinear flows}, preprint
  arXiv:1701.04343,  (2017).

\bibitem{mordhorst2017pod}
{\sc M.~Mordhorst, T.~Strecker, D.~Wirtz, T.~Heidlauf, and O.~R{\"o}hrle}, {\em
  {POD-DEIM} reduction of computational {EMG} models}, Journal of Computational
  Science, 19 (2017), pp.~86--96.

\bibitem{narayan2017christoffel}
{\sc A.~Narayan, J.~Jakeman, and T.~Zhou}, {\em A {C}hristoffel function
  weighted least squares algorithm for collocation approximations}, Mathematics
  of Computation, 86 (2017), pp.~1913--1947.

\bibitem{peherstorfer2020stability}
{\sc B.~Peherstorfer, Z.~Drmac, and S.~Gugercin}, {\em Stability of discrete
  empirical interpolation and gappy proper orthogonal decomposition with
  randomized and deterministic sampling points}, SIAM Journal on Scientific
  Computing, 42 (2020), pp.~A2837--A2864.

\bibitem{pukelsheim2006optimal}
{\sc F.~Pukelsheim}, {\em Optimal design of experiments}, SIAM, 2006.

\bibitem{ryckelynck2005priori}
{\sc D.~Ryckelynck}, {\em A priori hyperreduction method: an adaptive
  approach}, Journal of Computational Physics, 202 (2005), pp.~346--366.

\bibitem{sedov1993similarity}
{\sc L.~I. Sedov and A.~Volkovets}, {\em Similarity and dimensional methods in
  mechanics}, CRC press, 2018.

\bibitem{shin16}
{\sc Y.~Shin and D.~Xiu}, {\em Nonadaptive quasi-optimal points selection for
  least squares linear regression}, SIAM Journal on Scientific Computing, 38
  (2016), pp.~A385--A411.

\bibitem{shin2016near}
{\sc Y.~Shin and D.~Xiu}, {\em On a near optimal sampling strategy for least
  squares polynomial regression}, Journal of Computational Physics, 326 (2016),
  pp.~931--946.

\bibitem{smith2013uncertainty}
{\sc R.~C. Smith}, {\em Uncertainty quantification: theory, implementation, and
  applications}, vol.~12, SIAM, 2013.

\bibitem{cstefuanescu2013pod}
{\sc R.~{\c{S}}tef{\u{a}}nescu and I.~M. Navon}, {\em {POD/DEIM} nonlinear
  model order reduction of an {ADI} implicit shallow water equations model},
  Journal of Computational Physics, 237 (2013), pp.~95--114.

\bibitem{wang2020generalized}
{\sc M.~Wang, S.~W. Cheung, E.~T. Chung, M.~Vasilyeva, and Y.~Wang}, {\em
  Generalized multiscale multicontinuum model for fractured vuggy carbonate
  reservoirs}, Journal of Computational and Applied Mathematics, 366 (2020),
  p.~112370.

\bibitem{wang2007large}
{\sc S.~Wang, E.~d. Sturler, and G.~H. Paulino}, {\em Large-scale topology
  optimization using preconditioned {K}rylov subspace methods with recycling},
  International Journal for Numerical Methods in Engineering, 69 (2007),
  pp.~2441--2468.

\bibitem{washabaugh}
{\sc K.~Washabaugh}, {\em Faster fidelity for better design: a scalable model
  order reduction framework for steady aerodynamic design applications}, PhD
  thesis, Stanford University, Stanford, CA, 2016.

\bibitem{white2020dual}
{\sc D.~A. White, Y.~Choi, and J.~Kudo}, {\em A dual mesh method with
  adaptivity for stress-constrained topology optimization}, Structural and
  Multidisciplinary Optimization, 61 (2020), pp.~749--762.

\bibitem{xiao2014non}
{\sc D.~Xiao, F.~Fang, A.~G. Buchan, C.~C. Pain, I.~M. Navon, J.~Du, and
  G.~Hu}, {\em Non-linear model reduction for the {N}avier--{S}tokes equations
  using residual {DEIM} method}, Journal of Computational Physics, 263 (2014),
  pp.~1--18.

\bibitem{yang2017efficient}
{\sc H.~Yang and A.~Veneziani}, {\em Efficient estimation of cardiac
  conductivities via {POD-DEIM} model order reduction}, Applied Numerical
  Mathematics, 115 (2017), pp.~180--199.

\bibitem{yang2016fast}
{\sc Y.~Yang, M.~Ghasemi, E.~Gildin, Y.~Efendiev, V.~Calo, et~al.}, {\em Fast
  multiscale reservoir simulations with pod-deim model reduction}, SPE Journal,
  21 (2016), pp.~2--141.

\bibitem{zhao2014pod}
{\sc P.~Zhao, C.~Liu, and X.~Feng}, {\em {POD-DEIM} based model order reduction
  for the spherical shallow water equations with {T}urkel-{Z}was finite
  difference discretization}, Journal of Applied Mathematics,  (2014).

\end{thebibliography}
\end{document}